\numberwithin{equation}{section}
\definecolor{dblue}{rgb}{0,0,0.45}
\definecolor{red}{rgb}{0.7,0,0}
\newtheorem{theorem}{Theorem}[section]
\newtheorem{lemma}[theorem]{Lemma}
\newtheorem*{lemma*}{Lemma}
\newtheorem{proposition}[theorem]{Proposition}
\theoremstyle{definition}
\newtheorem{remark}[theorem]{Remark}
\theoremstyle{remark}
\newcommand{\N}{{\mathbb N}}
\newcommand{\R}{{\mathbb R}}
\newcommand{\Q}{{\mathbb Q}}
\newcommand{\cD}{{\mathcal D}}
\newcommand{\cE}{{\mathcal E}}
\newcommand{\cH}{{\mathcal H}}
\newcommand{\cK}{{\mathcal K}}
\newcommand{\cL}{{\mathcal L}}
\newcommand{\cM}{{\mathcal M}}
\newcommand{\cN}{{\mathcal N}}
\newcommand{\cP}{{\mathcal P}}
\newcommand{\cS}{{\mathcal S}}
\newcommand{\cV}{{\mathcal V}}
\newcommand{\cW}{{\mathcal W}}
\newcommand{\cZ}{{\mathcal Z}}
\newcommand{\al}{\alpha}
\newcommand{\la}{\langle}
\newcommand{\ra}{\rangle}
\newcommand{\nn}{\nonumber}
\newcommand{\ve}{\varepsilon}
\newcommand{\pfbM}{O (\cM; \cD \oplus \cD^{\perp})}
\date{}
\begin{document}

\title{Large deviations for small noise 
hypoelliptic diffusion bridges
on sub-Riemannian manifolds
}
\author{   Yuzuru \textsc{Inahama} 
%\footnote{ }
}
\maketitle

\begin{abstract}
\noindent
In this paper we study a large deviation principle
of Freidlin-Wentzell type for pinned hypoelliptic diffusion measures 
associated with a natural 
sub-Laplacian on a compact sub-Riemannian manifold.
To prove this large deviation principle,
 we use rough path theory and manifold-valued
Malliavin calculus.
\vskip 0.08in
\noindent{\bf Keywords.}
Large deviation principle; Sub-Riemannian geometry;
Pinned diffusion process;
Malliavin calculus; Rough path theory;
\vskip 0.08in
\noindent {\bf Mathematics subject classification.} 60F10, 53C17,
58J65, 60H07, 60L90.			
\end{abstract}

\section{Introduction}

In the theory of  stochastic differential equations (SDEs),
small noise problems SDEs
are considered very important and have been studied intensively and extensively.
A large deviation principle (LDP) associated with them is 
called Freidlin-Wentzell's LDP.
One of its typical formulations is as follows.
Let $\cM$ be a Euclidean space or a manifold 
and let $V_i$, $0 \le i \le d$, be sufficiently nice vector fields on $\cM$. 
For a standard $d$-dimensional Brownian motion $(w_t)_{0\le t\le 1}$, 
consider the following Stratonovich-type SDE:
\[
dX_t^{\ve} =\ve \sum_{i=1}^d  V_i (X_t^{\ve})\circ dw_t^i   +\ve^2 V_0 (X_t^{\ve}) dt,
\qquad
X_0^{\ve} =x.
\]
Here, $x  \in\cM$ is a given initial point and $0<\ve \ll 1$ is a small parameter.
Note that $X^{\ve} =(X_t^{\ve})_{0\le t \le 1}$ is the diffusion process 
associated with the generator $\ve^2 (\tfrac12 \sum_{i=1}^d  V_i^2 +V_0)$
and the starting point $x$.
Then, as is well-known, the law of $X^{\ve}$ satisfies an LDP as $\ve\searrow 0$.

Let us consider the case that $X^{\ve}_t$ has a (sufficiently nice)
strictly positive density with respect to a reference measure on $\cM$ 
(e.g. the Lebesgue measure when $\cM$ is a Euclidean space).
Then, the pinned diffusion process from $x$ to $a$
associated with the above generator exists, where $a$ 
is a given end point.
It seems quite natural to ask whether an LDP of Freidlin-Wentzell type
holds for these scaled pinned 
diffusion measures  as $\ve\searrow 0$.
In this work we take up this problem.

Although there are a large number of papers on the standard version 
of Freidlin-Wentzell's LDP, 
only a few paper have been published on this type of LDP.
To the author's knowledge, the first one is Hsu \cite{hsu}.
He proved a pinned version of Freidlin-Wentzell's LDP 
for pinned Brownian motion on a compact Riemannian manifold
(i.e., the pinned diffusion process associated with 
one half of Laplace-Beltrami operator).
Then, by studying SDEs under a suitable bracket-generating condition 
on the coefficient vector fields,
the author  \cite{in1, in2}  proved this type of LDP 
in the Euclidean setting for rather general pinned diffusion processes.
His method is a combination of rough path theory 
and quasi-sure analysis, which is a potential theoretic part of Malliavin calculus.
Also, Bailleul \cite{bai} proved this type of LDP on a compact manifold
for pinned diffusions
associated with the sum-of-square type generator 
as above with a  suitable bracket-generating condition 
on these vector fields.
He combined a probabilistic method (rough path theory) and 
and an analytic method (Sanchez-Calle's estimate for 
the semigroup generated by 
a sum-of-square type operator).

The purpose of this paper is to prove an analogous LDP for 
the pinned diffusion process on  a sub-Riemannian manifold $\cM$
associated with the generator $\ve^2  (\Delta_{\mathrm{sub}}/2 +V)$, 
where $\Delta_{\mathrm{sub}}$ is a natural ``div-grad type"
sub-Laplacian on $\cM$
and $V$ is an arbitrary smooth vector field on $\cM$.
Our proof is basically similar to those in  the preceding works \cite{in1, in2}.
However,
 there are two new ingredients in this work.
First, in order to realize the $\ve^2  (\Delta_{\mathrm{sub}}/2 +V)$-diffusion process via an SDE, we use Eells-Elworthy's construction on
a frame bundle over $\cM$.
Second, since we work on the manifold $\cM$ and its frame bundle,
we need manifold-valued Malliavin calculus developed in Taniguchi \cite{ta}.  
 
The organization of this paper is as follows.
In Section \ref{sec.result} we formulate our LDP precisely
and then state our main result (Theorem \ref{thm.mainQ}).
Section \ref{sec.EE} is devoted to recalling the
stochastic parallel transport over a sub-Riemannian manifold.
Stochastic tools such as Malliavin calculus, quasi-sure analysis,
and rough path theory are collected in  Section \ref{sec.prob}.
In Section \ref{sec.LDP_wd} we provide an LDP 
for the rough path lifts of certain positive Watanabe distributions
on the geometric rough path space (Theorem \ref{tm.ldp.dlt}).
Our main result is almost immediate from this, thanks to Lyons' 
continuity theorem. 
The lower estimate of the LDP in Theorem \ref{tm.ldp.dlt}
is proved in Section \ref{sec.lower}, 
while the upper estimate is proved in Section \ref{sec.upper}.
In Appendix \ref{sec.appen} we show the strict positivity 
of the heat kernel associated with $\ve^2  (\Delta_{\mathrm{sub}}/2 +V)$, which ensures the well-definedness of the pinned diffusion measure.

Throughout this paper we will use the following notation.
\begin{itemize}
\item
$\N_+ =\{ 1,2,3, \ldots\}$ and $\N =\{ 0\} \cup \N_+$.
The set of all real numbers is denoted by $\R$.
\item
The time interval of (rough) paths
 is basically $[0,1]$ unless otherwise specified.
\item
Let $U$ be an open set of a manifold $\cN$
and $\cV$ be a vector (or a fiber) bundle over $U$.
We denote by $\Gamma (U; \cV)$ the set of all smooth sections 
of $\cV$ on $U$. 
When $U = \cN$, we will often simply write $\Gamma (\cV)$.
For instance, we will write $\Gamma (T\cN)$  or 
$\Gamma (\cN;T\cN)$ for the set of all smooth vector fields on $\cN$. 
   \item
      For a manifold $\cN$, $y\in \cN$, 
            and a subbundle $\cV$ of $T\cN$ with a metric, we set  
\begin{align*}
\cH_y (\cN, \cV) 
&=\{  \gamma \colon [0,1] \to \cN
\mid \mbox{absolutely continuous,} \quad  \gamma_0 =y,
\\
&\qquad\qquad  
  \mbox{$ \gamma^\prime_t \in \cV_{ \gamma_t}$ for almost all $t$}, \quad
\cE ( \gamma) :=\int_0^1 | \gamma^\prime_t|_{\cV}^2 dt <\infty
\}.
\end{align*}
We call $\cE (\gamma)$ the energy of $ \gamma$.
 \item 
Cameron-Martin space $\cH^d$ over $\R^d$ is
a real Hilbert space  defined by
\begin{align*}
\cH^d
&=\{ h \colon [0,1] \to \R^d
\mid \mbox{absolutely continuous,} \quad  h_0 =0,
\\
&\qquad\qquad  
  \mbox{with} \quad
\| h\|_{\cH^d}^2:=\int_0^1 | h^\prime_t|_{\R^d}^2 dt <\infty
\}.
\end{align*}
In other words, $\cH^d = \cH_0 (\R^d, T\R^d)$
and $\cE (h)=\| h\|_{\cH^d}^2$.
When $d$ is obvious from the context, we simply write $\cH$.

\item
The classical $d$-dimensional Wiener space 
$(\cW, \cH, \mu)=(\cW^d, \cH^d, \mu^d)$ is defined as follows.
(1)~$\cH$ is Cameron-Martin Hilbert space as above. 
(2)~$\cW$ is the Banach space of all continuous paths 
from $[0,1]$ to $\R^d$ which start at the origin.
The topology of $\cW$ is that of uniform convergence as usual.
(3)~$\mu$ is the $d$-dimensional Wiener measure.
A generic element of $\cW$ is denoted by $w$.
The coordinate process $(w_t)_{0\le t \le 1}$ defined on $(\cW, \mu)$
is called the canonical realization
of $d$-dimensional Brownian motion.

      \item
       Let $Z_1, \ldots, Z_m$ be smooth vector fields on an open subset $U$ of a manifold $\cN$. We set $\Sigma^1 = \{Z_1, \ldots, Z_m \}$
       and $\Sigma^k =\{  [Z_i, Y]  \mid Y\in \Sigma^{k-1}\}$
for $k\ge 2$, recursively.       
We next set for $k \ge 1$
 \[
 {\rm Lie}^{(k)} (Z_1, \ldots, Z_m) ={\rm span} \left[ 
  \cup_{j=1}^k \Sigma^j
    \right]
    \,\mbox{ and }\,
     {\rm Lie} (Z_1, \ldots, Z_m) ={\rm span} \left[ 
  \cup_{j=1}^\infty \Sigma^j
    \right].
 \]       
 Here, ${\rm span}~A$ means the linear span of $A$.
For $x \in U$, we  set
   \[
    {\rm Lie}^{(k)} (Z_1, \ldots, Z_m) (x) = \{ Y (x) \mid 
      Y \in {\rm Lie}^{(k)} (Z_1, \ldots, Z_m)\}
      \subset 
      T_x \cN
        \]
        and also set ${\rm Lie} (Z_1, \ldots, Z_m) (x)\subset 
      T_x \cN$ in the same way.
 \end{itemize}        

As for the last item, one should note the following simple fact. 
Suppose that
  there is a smooth function $F\colon U \to {\rm GL} (m, \R)$
such that
\[
[ Z_1, \ldots, Z_m] =[ \hat{Z}_1, \ldots, \hat{Z}_m] F
\qquad \mbox{on $U$.}
\]
Then, for every $x\in U$, we have
${\rm Lie}^{(k)} (Z_1, \ldots, Z_m) (x) = {\rm Lie}^{(k)} (\hat{Z}_1, \ldots, \hat{Z}_m) (x)$ for every  $k \ge 1$ and therefore
${\rm Lie} (Z_1, \ldots, Z_m) (x) = {\rm Lie} (\hat{Z}_1, \ldots, \hat{Z}_m) (x)$.

%%%%%%%%%%%%%%
%\newpage
%%%%%%%%%%%%%%
\section{Setting and main result}
\label{sec.result}

First we recall the essentials of sub-Riemannian 
geometry following \cite{ri}.
We say that 
 $(\cM,\cD,g)$ is a sub-Riemannian manifold if   
(i)~$\cM$ is a connected, smooth manifold of
dimension $n$, 
(ii)~$\cD\subset T\cM$, $T\cM$ being the tangent bundle of $\cM$,
is a smooth distribution of constant rank $d~(1 \le d \le n)$
which satisfies the H\"ormander condition at every $x \in \cM$
and
(iii)~$g=(g_x)_{x\in \cM}$,
where each $g_x$ is an inner product on the fiber $\cD_x$,
and $x\mapsto g_x$ is smooth.
(When there is no risk of confusion, we simply say that 
$\cM$ is a sub-Riemannian manifold.)
When $n=d$, this definition coincides with that of a 
(connected) Riemannian manifold.
Throughout this paper $\cM$ is assumed to be compact.

The precise statement of 
 the H\"ormander condition on $\cD$ at $x \in \cM$ is as follows:
If $\{Z_1, \ldots, Z_d\}$ is a local frame of $\cD$ 
over a coordinate neighborhood $U$ around $x$, 
then 
${\rm Lie} (Z_1, \ldots, Z_d) (x)  = T_{x} \cM$.
As is well-known, this condition does not depend on the choice of 
$U$ and $\{Z_1, \ldots, Z_d\}$.

Now we recall a ``div-grad type" sub-Laplacian 
on a  sub-Riemannian manifold $\cM$.
Let ${\bf vol}$ be a smooth volume on $\cM$, that is, 
${\bf vol}$ is a measure on $\cM$ whose restriction to every local coordinate chart 
is written as a strictly positive smooth density function 
times the Lebesgue measure on the chart.
We consider the second-order differential operator of the form
$\Delta_{\mathrm{sub}}= \mathrm{div}\circ \mathrm{grad}_{\cD}$,
where $\mathrm{grad}_{\cD}$ is the
 horizontal gradient in the direction of $\cD$
and $\mathrm{div}$ is the divergence with respect to ${\bf vol}$
(i.e. $\mathrm{div}= - (\mathrm{grad}_{\cD})^*$ at least formally, 
where the adjoint is taken with respect to ${\bf vol}$).

A continuous path $\gamma \colon [0,1] \to \cM$
is said to be an admissible path
\footnote{It is called a {\it horizontal} path in \cite{ri}.
We avoid this term, however, because the term ``{\it horizontal}" 
is also used in the theory of connections on a principal bundle.}
 if $\gamma$ is absolutely 
continuous, $ \gamma_t^{\prime} \in \cD_{\gamma_t}$ 
for almost all $t \in [0,1]$,
and 
\begin{equation}\label{def.energy}
\cE (\gamma) :=\int_0^1 
   | \gamma_t^{\prime}|^2_{g_{\gamma_t}} dt <\infty.
\end{equation}
We call $\cE (\gamma)$ the energy of $\gamma$.
(If $\gamma$ is not admissible, we set $\cE (\gamma)=+\infty$ by convention.)
By Chow-Rashevsky's theorem \cite[Theorem 1.14]{ri}, 
for every $x, y \in \cM$
there exists an admissible path $\gamma$ such that 
$\gamma_0 =x$ and $\gamma_1 =y$.

We define $d_{SR}\colon \cM \times \cM \to [0,\infty)$ by 
\begin{align*}
d_{SR} (x,y) &= \inf\{  \int_0^1  | \gamma_t^{\prime}|_{g_{\gamma_t}} dt
\mid \gamma \colon [0,1] \to \cM,
\mbox{ admissible with $\gamma_0 =x, \gamma_1 =y$}
\}.
\end{align*}
Then, $d_{SR} (x,y) <\infty$ for every $x, y \in \cM$. 
It is well-known that $d_{SR}$ becomes a distance on $\cM$,
which generates the same topology as the original 
manifold topology of $\cM$.
This is called the sub-Riemannian distance of $\cM$.
According to \cite[Proposition 2.1]{ri}, it holds that 
\begin{equation}\label{eq.ED^2}
d_{SR} (x,y)^2 
=
\inf\{  \cE (\gamma)  \mid \gamma \colon [0,1] \to \cM,
\mbox{ admissible with $\gamma_0 =x, \gamma_1 =y$}
\}.
\end{equation}

%%%%%%%   kokomade  %%%%%%%%%%%%%%%

Let $V$ be any smooth vector field on $\cM$ (i.e. $V \in \Gamma (T\cM)$) and $\ve \in (0,1]$.
Then, the diffusion process on $\cM$ associated with 
$\ve^2 (\Delta_{\mathrm{sub}}/2 +V)$ starting at $x \in \cM$ 
has a density $p^{\ve}_t (x, a)$ with respect to ${\bf vol}(da)$
at every $t >0$.
In fact, $p^{\ve}_t (x, a) >0$ for all $x,a \in \cM$
and $t, \ve \in (0,1]$ and $a \mapsto p^{\ve}_t (x, a)$ is smooth
for all $x \in \cM$ and $t, \ve \in (0,1]$.

The pinned diffusion measure $\Q^{\ve}_{x,a}$ 
associated with $\ve^2 (\Delta_{\mathrm{sub}}/2 +V)$
from $x$ to $a$ is a unique probability measure on $C([0,1], \cM)$,
the continuous path space over $\cM$, such that
the following holds:
For every $k \ge 1$, $0=t_0 <t_1 <\cdots < t_k < t_{k+1}=1$
and $G \in C^\infty (\cM^k)$,
\begin{align*}
\lefteqn{
\int_{C([0,1], \cM)} 
 G (\xi_{t_1}, \ldots, \xi_{t_k}) \Q^{\ve}_{x,a} (d\xi)
}
\\
&= 
p_1^\ve (x,a)^{-1}
\int_{\cM^k} G (x_1, \ldots, x_k) \prod_{i=0}^k p^\ve_{t_{i+1} -t_i} (x_i, x_{i+1})
\prod_{i=1}^k {\rm vol} (dx_i)
\end{align*}
with the convention that $x_0 =x$ and $x_{k+1} =a$.
(We will see that $\Q^{\ve}_{x,a}$ does exist.)
Here, $C([0,1], \cM)$ is the set of all continuous paths on $\cM$
equipped with the compact-open topology.
The closed subset of all continuous paths
 which start at $x$ and end at $a$
is denoted by $C_{x,a}([0,1], \cM)$, in which $\Q^{\ve}_{x,a}$ 
is supported.

Now we state our main theorem.  
This can be viewed as a version of Freidlin-Wentzell type LDP
for pinned hypoelliptic pinned diffusion processes
on  a sub-Riemannian manifold.
As one can easily expect, the rate function equals one half of
the energy functional on the path space (up to an additive constant).
This theorem includes the main theorem of \cite{hsu} as a special case.
We will prove it in Section \ref{sec.LDP_wd} as a simple application of 
Theorem \ref{tm.ldp.dlt}.
The goodness of the rate function $J$ defined by 
\eqref{eq.0720-2} below is a part of our claim.
Note that $J$ actually attains its minimum $0$
because of its goodness and \eqref{eq.ED^2}.

\begin{theorem}\label{thm.mainQ}
Let $(\cM, \cD, g)$ be a compact sub-Riemannian manifold
with a smooth volume ${\bf vol}$.
For $x, a \in \cM$, $V\in \Gamma (T\cM)$ and $\ve \in (0,1]$,
let $\Q^{\ve}_{x,a}$ be the pinned diffusion measure as above. 
Then, $\{\Q^{\ve}_{x,a} \}_{0<\ve \le 1}$ satisfies an LDP 
on $C_{x,a}([0,1], \cM)$ as $\ve\searrow 0$ 
with the speed $\ve^2$
and the good rate function $J\colon C_{x,a}([0,1], \cM)\to [0,\infty]$,
where 
\begin{equation}\label{eq.0720-2}
J (\gamma)=
\frac12 \left\{
\cE (\gamma) - d_{SR} (x,a)^2
\right\}.
\end{equation}
Here, $\cE (\gamma)$ is the energy of $\gamma$  defined by \eqref{def.energy}.
\end{theorem}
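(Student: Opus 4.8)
The plan is to reduce the theorem to the rough path statement announced as Theorem \ref{tm.ldp.dlt}, following the strategy of \cite{in1, in2} but now on the frame bundle. First I would realize the $\ve^2(\Delta_{\mathrm{sub}}/2 + V)$-diffusion via the Eells--Elworthy construction recalled in Section \ref{sec.EE}: lift the sub-Laplacian to a sum-of-squares (plus drift) horizontal operator on the orthonormal frame bundle $O(\cM;\cD)$, solve the associated Stratonovich SDE $dU_t^\ve = \ve\sum_{i=1}^d H_i(U_t^\ve)\circ dw_t^i + \ve^2 H_0(U_t^\ve)\,dt$ with horizontal vector fields $H_i$ and the lift of $V$, and project down by $\pi\colon O(\cM;\cD)\to\cM$ to obtain $X_t^\ve = \pi(U_t^\ve)$. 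The bracket-generating (H\"ormander) hypothesis on $\cD$ at every point guarantees that $H_1,\dots,H_d$ together with the vertical directions generate, so that the lifted diffusion is hypoelliptic; in particular $X_t^\ve$ has a smooth strictly positive density $p_t^\ve(x,a)$ (the strict positivity being exactly what Appendix \ref{sec.appen} supplies), so $\Q^\ve_{x,a}$ is well defined.

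Next I would express the pinned measure as a Watanabe-distributional disintegration. Writing $X^\ve = \Phi^\ve(\ve w)$ for the It\^o map of the lifted SDE composed with $\pi$, the conditional law $\Q^\ve_{x,a}$ is, up to the normalization $p_1^\ve(x,a)^{-1}$, the push-forward of the positive generalized Wiener functional $\delta_a(X_1^\ve)$ — a positive Watanabe distribution, whose existence and basic estimates require the manifold-valued Malliavin calculus of Taniguchi \cite{ta} together with the nondegeneracy of the Malliavin covariance of $X_1^\ve$ under the H\"ormander condition. Then I would \emph{lift the paths to rough paths}: using rough path theory (Section \ref{sec.prob}), the Brownian rough path $\mathbf{W}$ of $\ve w$ carries $\delta_a(X_1^\ve)$ to a family of positive Watanabe distributions on the geometric rough path space, and Theorem \ref{tm.ldp.dlt} asserts precisely that the rough-path lifts of these distributions satisfy an LDP with speed $\ve^2$ and rate function $\gamma\mapsto \frac12\cE(\gamma) - \frac12 d_{SR}(x,a)^2$ on the set of rough path lifts of finite-energy admissible paths from $x$ to $a$.

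From Theorem \ref{tm.ldp.dlt} the main theorem follows by pushing the rough-path LDP forward through the Lyons--It\^o solution map. Lyons' continuity theorem says that the map sending a geometric rough path to the solution of the lifted SDE, and then projecting by $\pi$ to $C_{x,a}([0,1],\cM)$, is continuous in the rough path topology; since $X_1^\ve = a$ on the event carrying $\Q^\ve_{x,a}$, the image of the conditioned rough-path measures under this continuous map is exactly $\Q^\ve_{x,a}$. The contraction principle then transports the LDP, and the rate function becomes the infimum of $\frac12\cE(\mathbf{h}) - \frac12 d_{SR}(x,a)^2$ over all Cameron--Martin-type controls $\mathbf{h}$ whose lifted skeleton ODE on $O(\cM;\cD)$ projects to a given $\gamma\in C_{x,a}([0,1],\cM)$; by the Eells--Elworthy correspondence between horizontal lifts and $\cD$-admissible paths this infimum equals $\frac12\cE(\gamma) - \frac12 d_{SR}(x,a)^2 = J(\gamma)$ when $\gamma$ is admissible (joining $x$ to $a$) and $+\infty$ otherwise. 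Goodness of $J$ is inherited from the goodness of the rate function in Theorem \ref{tm.ldp.dlt} (the level sets of $\cE$ on $\cD$-admissible paths joining two fixed points are compact in $C([0,1],\cM)$, a standard consequence of \eqref{eq.ED^2} and Arzel\`a--Ascoli), and the normalizing constant $p_1^\ve(x,a)$ contributes only the additive term $-\tfrac12 d_{SR}(x,a)^2$ via the known small-time-type logarithmic asymptotics $\ve^2\log p_1^\ve(x,a)\to -\tfrac12 d_{SR}(x,a)^2$, which in this setting is itself a byproduct of the LDP for $\delta_a(X_1^\ve)$.

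The main obstacle I expect is the passage from the classical-path conditioning to the rough-path level: one must show that conditioning the lifted diffusion on $\{X_1^\ve = a\}$ commutes appropriately with the rough-path lift, i.e. that the rough-path lift of $\Q^\ve_{x,a}$ really is governed by the positive Watanabe distribution $\delta_a(X_1^\ve)$ pulled back to rough path space, and that this family of distributions has the integrability/tightness needed for the Laplace-type (Varadhan/Bryc) argument behind Theorem \ref{tm.ldp.dlt}. This is where the manifold-valued Malliavin calculus of \cite{ta} and the uniform-in-$\ve$ nondegeneracy estimates do the real work; the subsequent transfer via Lyons' continuity theorem and the identification of the rate function are comparatively routine once that step is in place.
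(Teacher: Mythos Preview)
Your proposal is correct and follows essentially the same route as the paper: realize the diffusion on the frame bundle via Eells--Elworthy, represent the pinned measure through the positive Watanabe distribution $\delta_a(X_1^\ve)$, lift to rough path space, invoke Theorem~\ref{tm.ldp.dlt}, and push forward via Lyons' continuity theorem and the contraction principle, identifying the rate function through the energy-preserving bijection of Proposition~\ref{pr.iso_path}. The only technical point you gloss over is that the RDE carries the $\ve$-dependent drift $\lambda^\ve_t=\ve^2 t$, so the solution map is not a function of the rough path alone; the paper handles this by pairing $\hat\nu^\ve_{u,a}$ with $\delta_{\lambda^\ve}$, using the trivial LDP for the latter together with exponential tightness to obtain a product LDP on $G\Omega^H_\alpha(\mathbb{R}^d)\times C_0^{1\text{-}H}([0,1],\mathbb{R})$, and then applying the contraction principle to the jointly continuous map $(\mathbf{w},\lambda)\mapsto \pi(u+\Phi(\mathbf{w},\lambda)^1)$.
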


\begin{remark}\label{rem.COtop}
Take any distance $d$ on $\cM$ which generates 
the topology of $\cM$. 
Then, 
\[
{\rm dist} (\gamma, \hat{\gamma}) 
:=\sup_{0\le t \le 1} d(\gamma_t, \hat{\gamma}_t)
\]
defines a distance on $C([0,1], \cM)$.
It is known that ${\rm dist}$ generates 
the compact-open topology of $C([0,1], \cM)$ regardless of the choice 
of $d$. 
Typical examples of $d$ include
(1) the sub-Riemannian distance $d_{SR}$ on $\cM$,
(2) the Riemannian distance on $\cM$ with respect to
 any Riemannian metric on $\cM$,
(3) the Euclidean distance of $\R^m$ restricted to $\cM$ for any 
embedding $\cM \hookrightarrow \R^m$.
\end{remark}

\begin{remark}\label{rem.strong}
In Theorem \ref{thm.mainQ} above and Proposition \ref{prop.subQ} 
below, the topology of the path spaces 
can be slightly strengthened as we now explain.
In this remark, $\alpha \in (1/3, 1/2)$.

Let $\cN$ be a compact smooth manifold and let
$\iota\colon \cN \hookrightarrow \R^m$ be 
an embedding  for some $m \in \N_+$.
We denote by $C^{\alpha -H} ([0,1], \R^m)$
 be the set of all $\alpha$-H\"older continuous paths  
 taking values in $\R^m$ and define 
 \[
 C^{\alpha -H} ([0,1], \cN) :=
  C^{\alpha -H} ([0,1], \R^m) \cap C([0,1], \cN),
 \]
whose distance is the restriction of the natural one 
of $C^{\alpha -H} ([0,1], \R^m)$.  

Let $\cN'$ be another compact smooth manifold and let
$\iota' \colon \cN' \hookrightarrow \R^{m'}$ be 
an embedding for some $m' \in \N_+$
and suppose that $\phi \colon \cN \to \cN'$ is a smooth map.
Noting that $\phi$ extends to a smooth map from $\R^{m}$ to $\R^{m'}$ 
with compact support, $\phi$ naturally induces a 
continuous map from $C^{\alpha -H} ([0,1], \cN)$ to 
$C^{\alpha -H} ([0,1], \cN')$ in an obvious way.
In particular, this topology of $C^{\alpha -H} ([0,1], \cN)$  is 
independent of the choice of $\iota$.

The reason why the LDP in Theorem \ref{thm.mainQ} (and in Proposition  \ref{prop.subQ} below) 
also holds on $C^{\alpha -H} ([0,1], \cM)$
is as follows.
When we derive Theorem \ref{thm.mainQ} from Theorem \ref{tm.ldp.dlt},
we consider a rough differential equation (RDE) on $\cP$, a principle bundle over $\cM$, and
embed $\cP$ into a Euclidean space
and then use Lyons' continuity theorem with respect to the $\alpha$-H\"older rough path topology, together with the contraction 
principle for LDPs.
So, one can easily see 
that our LDP holds on $C^{\alpha -H} ([0,1], \cM)$, too.
(However, we do not write this fact in our main theorem
since this set does not look very beautiful from the geometric viewpoint.)
\end{remark}

Before closing this section, we claim that
our method can reprove a very similar LDP in 
\cite{bai} on a compact manifold when the generator of the diffusion
process is of sum-of-squares type.

Let $\cM'$ be a compact smooth manifold
and $V_0, V_1, \ldots, V_k$ ($k \in \N_+$) be smooth vector fields
on $\cM'$.
Consider the second order differential operator
 $\ve^2 (V_0 + \tfrac12\sum_{i=1}^k V_i^2)$.
Assume the bracket generating condition that
 ${\rm Lie} (V_1, \ldots, V_k) (x) =T_x  \cM'$ for every $x \in \cM'$.
Then, the pinned diffusion measure from $x$ to $a$
associated with this operator 
exists uniquely, which is denoted by $\tilde\Q^{\ve}_{x,a}$.

The following proposition, together with Remark \ref{rem.strong},
 is the main result of \cite[Theorem 1]{bai}.
See \cite[Eq. (4)]{bai} for an explicit expression of $J'$,
which may not always have a deep geometric meaning.
\begin{proposition}\label{prop.subQ}
Let the notation and assumptions be as above.
Then, for every $x, a \in \cM'$,
$\{\tilde\Q^{\ve}_{x,a} \}_{0<\ve \le 1}$ satisfies an LDP 
on $ C_{x,a} ([0,1], \cM')$ as $\ve\searrow 0$
with the speed $\ve^2$ and
a good rate function $J'$.
\end{proposition}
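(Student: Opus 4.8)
The plan is to run the proof of Theorem \ref{thm.mainQ} essentially verbatim, with the principal bundle $\cP$ over $\cM$ and the Eells--Elworthy horizontal vector fields replaced by the manifold $\cM'$ itself together with the given vector fields $V_1, \ldots, V_k$ (and drift $V_0$). First I would fix an embedding $\iota' \colon \cM' \hookrightarrow \R^{m'}$ and extend each $V_i$ ($0 \le i \le k$) to a smooth, compactly supported vector field on $\R^{m'}$. Since $\cM'$ is an invariant submanifold of the extended fields, the Stratonovich-type SDE
\[
dX_t^{\ve} = \ve \sum_{i=1}^k V_i (X_t^{\ve}) \circ dw_t^i + \ve^2 V_0 (X_t^{\ve})\, dt, \qquad X_0^{\ve} = x,
\]
has a unique solution which stays on $\cM'$ and whose law is the diffusion measure associated with $\ve^2 (V_0 + \tfrac12 \sum_{i=1}^k V_i^2)$ started at $x$. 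Working in the single Euclidean chart $\R^{m'}$ here has the pleasant side effect that only the classical Malliavin and Watanabe calculus is needed, rather than the manifold-valued version of \cite{ta}.

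Next I would collect the analytic inputs, which are the exact analogues of those used for Theorem \ref{thm.mainQ}. By the bracket-generating hypothesis ${\rm Lie}(V_1, \ldots, V_k)(y) = T_y \cM'$ for every $y\in\cM'$ --- which is precisely H\"ormander's condition for the operator above --- the Malliavin covariance matrix of $X_1^{\ve}$ is a.s.\ invertible with inverse in every $L^p$, so $X_1^{\ve}$ has a density $p_1^{\ve}(x, \cdot)$ with respect to a fixed smooth volume on $\cM'$ that is smooth in the second variable; its strict positivity $p_1^{\ve}(x, a) > 0$ for all $x, a \in \cM'$ follows from the connectedness of $\cM'$ and Chow--Rashevsky's theorem by the same chain-of-balls argument as in Appendix \ref{sec.appen}. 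Consequently $\tilde\Q^{\ve}_{x,a}$ is a well-defined probability measure supported on $C_{x,a}([0,1], \cM')$, and it is the measure on path space associated with the positive Watanabe distribution $\delta_a(X_1^{\ve})$, normalized by the constant $p_1^{\ve}(x,a)$.

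Finally I would pass to rough paths. Let $\Phi$ denote the It\^o--Lyons solution map of the RDE corresponding to the SDE above, so that $X^{\ve} = \Phi(\text{lift of } \ve w)$ and $\Phi$ maps geometric $\alpha$-H\"older rough paths continuously into $C([0,1], \cM') \subset C([0,1], \R^{m'})$ for $\alpha \in (1/3, 1/2)$. The rough path lift of $\tilde\Q^{\ve}_{x,a}$ is then the rough path lift of the Watanabe distribution $\delta_a(X_1^{\ve})$, to which Theorem \ref{tm.ldp.dlt} applies; this yields an LDP on the geometric rough path space with speed $\ve^2$ and a good rate function. Pushing this LDP forward by the continuous map $\Phi$ and invoking the contraction principle --- exactly as in the passage from Theorem \ref{tm.ldp.dlt} to Theorem \ref{thm.mainQ}, and as explained in Remark \ref{rem.strong} --- gives the asserted LDP for $\{\tilde\Q^{\ve}_{x,a}\}_{0 < \ve \le 1}$ on $C_{x,a}([0,1], \cM')$ with a good rate function $J'$ realized as an infimum of one half of the energy over $\Phi$-preimages; unwinding this infimum reproduces the expression \cite[Eq. (4)]{bai}.

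The only step that is not purely mechanical is checking that the hypotheses of Theorem \ref{tm.ldp.dlt} genuinely hold for the Watanabe distribution $\delta_a(X_1^{\ve})$ in the sum-of-squares setting --- in particular, that the normalization by $p_1^{\ve}(x,a)^{-1}$ does not spoil the LDP, which comes down to the existence of $\lim_{\ve \downarrow 0} \ve^2 \log p_1^{\ve}(x, a)$ and to the uniformity in $\ve$ of the Malliavin nondegeneracy estimates for $X_1^{\ve}$. These are, however, exactly the ingredients already assembled for the proof of Theorem \ref{thm.mainQ}, so I expect no new obstacle to arise; everything else is a transcription of the frame-bundle argument to the simpler situation in which the ``upstairs'' manifold is $\cM'$ itself.
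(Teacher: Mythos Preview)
Your proposal is correct and matches the paper's own approach: the paper's proof simply states that the same method as for Theorem~\ref{thm.mainQ} works, but is easier because no principal bundle over $\cM'$ is required, and you have spelled out exactly this simplification. One small clarification: Theorem~\ref{tm.ldp.dlt} is stated for the specific objects $\nu^{\ve}_{u,a}$ built from the frame-bundle SDE, so strictly speaking you are not \emph{applying} it but rather reproving the analogous statement in the sum-of-squares setting---which, as you note, goes through because every ingredient (Kusuoka--Stroock estimate, positivity of the kernel, localized IbP, quasi-sure rough path lift) depends only on the H\"ormander condition and compactness, not on the Eells--Elworthy construction.
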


\begin{proof}
We can prove this proposition using 
 the same method for Theorem \ref{thm.mainQ}.
 The actual proof of this proposition
 is much simpler than that of Theorem \ref{thm.mainQ}
 since we need not use a principal bundle over $\cM'$. 
 So, we omit the proof.
  \end{proof}

\begin{remark}
Suppose that
 $\Delta_{\mathrm{sub}}$ admits a sum-of-squares form
 in the following sense: there exist $k \ge d$ and 
$V_i \in \Gamma (T\cM)~(1\le i \le k)$
 such that the following condition holds:
\begin{equation}\label{eq.0720-1}
\Delta_{\mathrm{sub}} - \sum_{i=1}^k V_i^2 \,\, \in \Gamma (T\cM),
\quad
{\rm Lie} (V_1, \ldots, V_k) (x) =T_x  \cM' \mbox{ for every $x \in \cM'$}.
\end{equation}
When $\Delta_{\mathrm{sub}}$ admits the above expression,
it is in a sense true that the LDP in Proposition \ref{prop.subQ}
immediately implies the LDP in Theorem \ref{thm.mainQ}.
However, it does not seem easy to
obtain in this way
the explicit expression \eqref{eq.0720-2} of the rate function $J$
from $J'$. The reason is as follows.
In this case, $k$ equals the dimension of Brownian motion
that plays a key role in the proof
and  can be very large. Hence, a one-to-one correspondence of 
 Cameron-Martin paths and admissible paths on $\cM$ 
as in Proposition \ref{pr.iso_path} breaks down in general.
Therefore, even when \eqref{eq.0720-1} holds, we believe
 Theorem \ref{thm.mainQ}  is worth proving.
(Unfortunately, the author does not know for which sub-Riemannian
manifolds Condition \eqref{eq.0720-1} holds.
For instance, for the Laplace-Beltrami operator on a Riemannian
manifold, \eqref{eq.0720-1} is known to be satisfied.)
\end{remark}

%%%%%%%%%%%%%%%%%%%%%%%%%%%
%\newpage
%%%%%%%%%%%%%%%%%%%%%%%%%%%
\section{Stochastic parallel transport over sub-Riemannian manifold}
\label{sec.EE}

In this section we recall the stochastic parallel transport 
over a compact sub-Riemannian manifold $(\cM, \cD, g)$.
The history of Eells-Elworthy's construction of
It\^o's  stochastic parallel transport is quite long.
The Riemannian case is classical (see \cite[Chapter 2]{hsu_bk} or \cite[Section V-4]{iwbk} for example),
but non-Riemannian cases have also been studied.
For diffusion processes associated with 
semielliptic second-order differential operators, see \cite{ell, ellbk}.
For those associated with
sub-Laplacians on various kinds of sub-Riemannian manifolds,
see \cite{bfg, bhm, gt, kt, thal} among others.
In this section we will mainly follow \cite{gt}.

%%%%%%%%%%%%%%

Now we construct a principle bundle over $\cM$ 
and a connection which are associated with the 
sub-Riemannian structure of $(\cM, \cD, g)$.
We write $n=\dim \cM$ and the rank of $d = {\rm rank}~\cD$ 
with $1 \le d \le n$.
Since the Riemannain case (i.e. the case $d=n$) is classical
and simpler, 
we only elaborate the case $1 \le d <n$ in this section.
But, the results in this section still hold for the case $d=n$.  

First, we take a subbundle $\cD^{\perp}=(\cD^{\perp}_x)_{x\in \cM}$ of the tangent bundle $T\cM$ such that 
\begin{equation}\label{eq.orth.dcmp}
T_x\cM =\cD_x\oplus\cD^{\perp}_x,
\qquad  x \in \cM
\end{equation}
holds. 
To make sure the existence of such $\cD^{\perp}$,
one just need to take any Riemannian metric of  $\cM$ and 
set $\cD^{\perp}_x$ to be the orthogonal complement of 
$\cD_x$ in $T_x\cM$ with respect to the metric.
The projections associated with \eqref{eq.orth.dcmp} 
are denoted by 
$\mathbf{pr}_x \colon T_x\cM \to \cD_x$
and 
$\mathbf{pr}^{\perp}_x \colon T_x\cM \to \cD^{\perp}_x$, respectively.
Then, $\mathbf{pr}= (\mathbf{pr}_x)_{x\in \cM}$ 
and $\mathbf{pr}^{\perp}= (\mathbf{pr}^{\perp}_x)_{x\in \cM}$
belong to $\Gamma (\mathbf{End} (T\cM, \cD))$ 
and $\Gamma (\mathbf{End} (T\cM, \cD^{\perp}))$, respectively.
For  any metric $h$ on $\cD^{\perp}$, we set
\[
\hat{g}_x \la v, \hat{v}\ra = g_x \la \mathbf{pr}_x v, \mathbf{pr}_x\hat{v}\ra 
+ h_x \la \mathbf{pr}^{\perp}_x v, \mathbf{pr}^{\perp}_x \hat{v}\ra,
\qquad
v, \hat{v} \in T_x\cM.
\]
Then, $\hat{g}$ is a Riemannian metric on $\cM$ which tames $g$,
that is, the restriction of $\hat{g}_x$ to $\cD_x \times \cD_x$ 
equals $g_x$. 
Moreover, the decomposition \eqref{eq.orth.dcmp}
is an orthogonal decomposition with respect to $\hat{g}_x$.

Next, choose metric Koszul connections $\tilde\nabla$  
and $\tilde\nabla^{\perp}$ on $\Gamma(\cD)$ and 
$\Gamma(\cD^{\perp})$ with respect to 
the metrics $g$ and $h$, respectively.
Note that they always exist. Define
\begin{equation}\label{def.nabla}
\nabla_X V = \tilde\nabla_X (\mathbf{pr} V)
+
\tilde\nabla^{\perp}_X (\mathbf{pr}^{\perp} V),
\quad
X, V \in \Gamma (T\cM).
\end{equation}
Since the projections are pointwise operations, 
 $\nabla$ is a $\hat{g}$-metric Koszul connection on $\Gamma(T\cM)$.
It is immediate from the definition that, 
for all $X \in \Gamma (T\cM)$, $Y \in \Gamma (\cD)$
and $Z \in \Gamma (\cD^{\perp})$, we have
$\nabla_X Y \in \Gamma (\cD)$ and  
$\nabla_X Z \in \Gamma (\cD^{\perp})$.

Now we introduce a principal bundle over $\cM$.
We choose any $(\cD^{\perp}, h)$ and  $\nabla$
(or $\tilde\nabla$ and $\tilde\nabla^{\perp}$)
as above and will fix them in what follows.
The product $O (d) \times O(n-d)$ of two orthogonal groups
 naturally acts on it from the right.
\begin{align*}
\pfbM_x &= \{u= \xi\oplus \eta 
\colon \R^d \oplus  \R^{n-d} =\R^n\to T_x\cM
 = \cD_x \oplus  \cD^{\perp}_x \mid
\\
& \qquad\quad
\mbox{$\xi \colon \R^d \to \cD_x$ 
and  $\eta \colon \R^{n-d} \to \cD^{\perp}_x$ are linear isometries}
\},
\\
\pfbM &= \bigsqcup_{x\in \cM} \pfbM_x.
\end{align*}
This is a subbundle of the orthonormal frame bundle 
over the Riemannian manifold $(\cM, \hat{g})$.
For notational simplicity we will write $\cP := \pfbM$ and 
$G:= O (d) \times O(n-d)$.
The Lie algebra of $G$ is ${\frak o} (d) \times {\frak o} (n-d)$,
which will be denoted by ${\frak g}$.
Here, ${\frak o} (d)$ stands for the set of real $d \times d$ 
anti-symmetric matrices. 
In the usual way, we view $G$ as a subgroup of $O(n)$
via the following injection:
\[
G \ni \quad (U, V) \mapsto 
\left(
\begin{array}{@{\,}c|c@{\,}}
 U & O\\  \hline
 O& V\\
\end{array}
\right) 
\quad \in O(n).
\]
We denote the natural projection by $\pi \colon \cP \to \cM$. 
The right action on $\cP$ by $a \in G$ is denoted by $R_a$.
The vertical vector (field) associated with $X \in {\frak g}$ 
is denoted by $X^*$, which is defined by 
$X^* (u) =  (d/dt)|_{t=0} R_{\exp (tX)} (u)$ at 
$u\in \cP$.

Let $U \subset \cM$ be a local coordinate neighborhood 
with a local chart $(x^1, \ldots, x^n)$ and
let $\{Z_1, \ldots, Z_d\}$ and $\{Z_{d+1}, \ldots, Z_n\}$ be 
a local orthonormal frame over $U$
of $\cD$ and $\cD^{\perp}$, respectively.
The canonical orthonormal basis of $\R^n$ is denoted by
$\{ \mathbf{e}_1, \ldots, \mathbf{e}_n\}$.
For $x \in U$,  set $\sigma_U (x)\oplus \tilde\sigma_U (x) \in \cP_x$ by 
$\sigma_U (x) (\mathbf{e}_i) =Z_i (x)$ for all $1 \le i \le d$
and $\tilde\sigma_U (x) (\mathbf{e}_i) =Z_i (x)$ for all $d+1 \le i \le n$.
Then, for every $\xi\oplus \eta \in \cP_x$, there is a unique
$e=\{e_{\alpha\beta}\}_{1 \le \alpha,  \beta \le n} 
  \in G$ such that 
\[
\xi\oplus \eta= (\sigma_U (x)\oplus \tilde\sigma_U (x)) \circ e
= 
\bigl(\sigma_U (x)  \circ \{e_{\alpha\beta}\}_{1 \le \alpha,\beta \le d}
\bigr) \oplus \bigl(
 \tilde\sigma_U (x) \circ 
  \{e_{\alpha \beta}\}_{d+1 \le \alpha, \beta \le n}
\bigr).
\]
Of course, $e_{\alpha\beta} =0$ if $(\alpha, \beta)$ belongs to
\begin{equation}\label{cond.bg}
(\{1, \ldots, d\} \times \{d+1, \ldots, n\})
 \cup (\{d+1, \ldots, n\} \times \{1, \ldots, d\}).
\end{equation}
In this way, we can identify $\pi^{-1} (U) \cong 
U \times G$ with a local chart
$(x^1, \ldots, x^n; \{e_{\alpha\beta}\}_{1 \le \alpha,\beta\le n})$.

Next,  set $\hat\omega = \{\hat\omega^{\alpha}_{\beta}\}_{1 \le \beta, \gamma \le n}$ by 
\begin{equation}\label{def.omega}
\nabla Z_\beta 
=\sum_{\alpha =1}^n
 \hat\omega^{\alpha}_{\beta} Z_{\alpha}
\quad \in \Gamma (U; T^*\cM \otimes T\cM), 
\qquad 1 \le \beta \le n.
\end{equation}
Since $\tilde\nabla$ and $\tilde\nabla^{\perp}$ are metric
and $\nabla$ is defined by \eqref{def.nabla},  we can easily see that
$\hat\omega_{\alpha}^{\beta}=- \hat\omega^{\alpha}_{\beta}$
for all $(\alpha, \beta)$ and 
$\hat\omega^{\alpha}_{\beta}= 0$ if $(\alpha, \beta)$ belongs to
\eqref{cond.bg}.
Hence, $\hat\omega$ is an ${\frak g}$-valued 
one-form on $U$, i.e. $\hat\omega \in
 \Gamma (U;  T^*\cM \otimes {\frak g} )$.
Unfortunately, $\hat\omega$ does not define a global one-form on $\cM$.
We need to lift it to obtain a globally defined one-form on the principal bundle $\cP$.

Define 
$\omega \in \Gamma (U \times G; 
T^*\cP \otimes {\frak g})$ 
     by
\begin{equation}\label{def.bigomega}
\omega = e^{-1} \hat\omega e +  e^{-1} de.
\end{equation}
This is a ${\frak g}$-valued one-form on $\pi^{-1} (U)$.
Note that $e^{-1} de$ stands for Maurer-Cartan form 
(i.e. the left translation of tangent vectors
to the unit element)
on $G$.
By a standard argument, we can see that 
$\omega$ actually defines a global one-form on $\cP$,
namely, we have  
$\omega \in \Gamma (\cP; T^*\cP \otimes {\frak g})$.
It is easy to see that $R_a^* \omega = a^{-1} \omega a$ 
for all $a \in G$ and that $\omega (X^*) = X$ for all $X \in {\frak g}$.
Thus, $\omega$ is an  Ehresmann connection on $\cP$.

On the above coordinate chart $\pi^{-1} (U)$,
we set $\Gamma^{\alpha}_{\beta\gamma}$ by
$\Gamma^{\alpha}_{\beta\gamma} =
\hat\omega^{\alpha}_{\beta} \la Z_{\gamma}\ra$, or equivalently,
\[
\nabla_{Z_{\gamma}}  Z_\beta 
=\sum_{\alpha =1}^n
\Gamma^{\alpha}_{\beta\gamma} Z_{\alpha},
\qquad 
1 \le \alpha, \beta, \gamma\le n.
 \]
We can easily see  that 
$\Gamma^{\alpha}_{\beta\gamma}=0$ if $(\alpha, \beta)$ belongs to \eqref{cond.bg} and that 
$\Gamma^{\alpha}_{\beta\gamma} = 
-\Gamma^{\beta}_{\alpha\gamma}$ for all $\alpha, \beta, \gamma$.
For $u=\xi\oplus \eta \in \pi^{-1}(x)$,
we define a linear injection
 $\ell_{u}\colon T_x \cM \to T_{u}\cP$ by
\begin{equation}\label{def.horlift}
\ell_{u} \la \sum_{\gamma =1}^n c_\gamma Z_\gamma (x) \ra 
=
\sum_{\gamma =1}^n c_\gamma
\Bigl( Z_\gamma (x) - 
\sum_{\alpha, \beta, \delta =1}^n
\Gamma^{\alpha}_{\beta\gamma} (x) e_{\beta\delta}
\frac{\partial}{\partial e_{\alpha\delta}}
\Bigr),
\quad
(c_1, \ldots, c_n) \in \R^n
\end{equation}
and set 
$\cK_{u} = \ell_{u} \la \cD_x \ra$
and 
$\cK_{u}^\perp = \ell_{u} \la \cD^\perp_x \ra$.
It is known that 
\[
\ker (\omega_{u} \colon  
T_{u} \cP \to {\frak g})= \cK_{u}
\oplus \cK^\perp_{u}.
\]
This is called the horizontal subspace
and $\ell_{u}$ is called the horizontal lift.
The subspace
$\cV_{u} = \{ X^*(u) \mid X \in {\frak g}\}$
is called the vertical subspace and it holds that 
$T_{u} \cP= \ker \omega_{u} \oplus \cV_{u}$.
(From these explicit expression, too, we can see that 
the horizontal lift and  the horizontal subspaces 
are compatible with the right action of $G$.)

We define
the canonical horizontal vector fields 
$A_i = \ell_{u} \la u \la \mathbf{e}_i \ra \ra
~(1\le i \le n)$ on $\cP$.
Since $u \la \mathbf{e}_i \ra 
=\sum_{\gamma =1}^n  e_{\gamma i}Z_\gamma (x)$,  
$A_i$ reads
\begin{equation}\label{eq.loc_ex_A}
A_i (u)= \sum_{\gamma =1}^n e_{\gamma i}
\Bigl( Z_\gamma (x) - 
\sum_{\alpha, \beta, \delta =1}^n
\Gamma^{\alpha}_{\beta\gamma} (x) e_{\beta\delta}
\frac{\partial}{\partial e_{\alpha\delta}}
\Bigr)
\end{equation}
in the local chart.
At every $u$, $\{ A_i (u)\}_{i=1}^d$ and  $\{ A_i (u)\}_{i=d+1}^n$
are linear base of $\cK_{u}$ and $\cK_{u}^\perp$,
respectively.
We equip $\cK_{u}\oplus \cK_{u}^\perp$ a unique inner product
so that $\{ A_i (u)\}_{i=1}^n$  becomes an orthonormal basis.
Set a linear isometry
$\theta_u  \colon \cK_{u}\oplus \cK_{u}^\perp \to \R^n$ by 
$\theta  \la A_i (u)\ra =\mathbf{e}_i$ for all $1 \le i \le n$.
We can naturally view $\theta$ as an $\R^n$-valued one-form on $\cP$.
To summarize, the following four bijective linear maps are all isometric.
(In either pair of maps, those in the opposite directions are inverses to each other.)
\[
\R^n =\R^d \oplus \R^{n-d}
\,\,\overset{\theta_u^{-1}}{\underset{\theta_u}{\rightleftarrows}}
 \,\, 
\ker \omega_{u} =\cK_{u}\oplus \cK_{u}^\perp
\,\,\overset{(\pi_*)_u}{\underset{\ell_u}{\rightleftarrows}}
 \,\, 
T_{x}\cM = \cD_{x}\oplus \cD_{x}^\perp.
\]
By restricting this to the first components, we obtain the following isometric correpondence:
\[
\R^d 
\,\,\overset{\theta_u^{-1}}{\underset{\theta_u}{\rightleftarrows}}
 \,\, 
\cK_{u}
\,\,\overset{(\pi_*)_u}{\underset{\ell_u}{\rightleftarrows}}
 \,\, 
\cD_{x}.
\]

\begin{lemma}\label{lem.partHor}
Canonical horizontal vector fields 
$\{A_1, \ldots, A_d\}$ on $\cP$ satisfy the partial H\"ormander 
condition at every $u \in \cP$, that is, 
\[
\pi_* \,
{\rm Lie} (A_1, \ldots, A_d) (u)  = T_{\pi (u)} \cM,
\qquad 
u \in \cP.
\]
\end{lemma}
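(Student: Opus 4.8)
The plan is to transfer the Hörmander condition on $\cD$, which holds by hypothesis on the base $\cM$, up to the total space $\cP$ by a careful bookkeeping of Lie brackets of the canonical horizontal vector fields. The key observation is that the horizontal lift $\ell_u$ intertwines (in a precise, pointwise-at-$x$ sense) the vector fields $Z_\gamma$ on $U$ with the $A_\gamma$ on $\pi^{-1}(U)$ modulo vertical directions: by \eqref{eq.loc_ex_A}, writing $A_i = \sum_\gamma e_{\gamma i}(Z_\gamma - \text{vertical correction})$, we have $\pi_* A_i(u) = u\langle \mathbf{e}_i\rangle = \sum_\gamma e_{\gamma i} Z_\gamma(x)$. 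So I would first prove, by induction on the bracket length, a ``projection formula'' of the form: for any multi-index $I=(i_1,\dots,i_k)$ with all $i_j \in \{1,\dots,d\}$,
\[
\pi_* [A_{i_1}, [A_{i_2}, [\ldots, A_{i_k}]\ldots]](u)
\equiv
\sum_{\gamma_1,\dots,\gamma_k} e_{\gamma_1 i_1}\cdots e_{\gamma_k i_k}\,
[Z_{\gamma_1}, [Z_{\gamma_2}, [\ldots, Z_{\gamma_k}]\ldots]](x)
\pmod{\ \text{lower-order brackets of the }Z\text{'s}},
\]
where the correction terms coming from differentiating the coefficients $e_{\gamma i}$ and from the Christoffel pieces only produce brackets of $Z$'s of length $<k$ (and again terms of the same type). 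The crucial point here is that the frame components $\{e_{\gamma i}\}_{1\le \gamma,i\le d}$ at the fixed point $u$ form an invertible matrix (the $O(d)$-block of the frame), so the map $(i_1,\dots,i_k)\mapsto (\gamma_1,\dots,\gamma_k)$ amounts, after taking spans, to a change of basis and does not shrink the span.

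Concretely I would run the induction as follows. For $k=1$: $\pi_* A_i(u)=\sum_\gamma e_{\gamma i}Z_\gamma(x)$, and since $\{e_{\gamma i}\}_{1\le \gamma,i\le d}$ is invertible, $\mathrm{span}\{\pi_* A_i(u): 1\le i\le d\}=\mathrm{span}\{Z_\gamma(x):1\le\gamma\le d\}=\cD_x$. For the inductive step I would compute $\pi_*[A_i, W]$ for $W$ a bracket of the $A_j$'s ($1\le j\le d$), using that $\pi_*[A_i,W] = [\pi_* A_i, \pi_* W]$ is false in general (since $A_i, W$ are not $\pi$-related to fixed fields on $\cM$), so instead I would work directly in the local chart $(x^1,\dots,x^n;\{e_{\alpha\beta}\})$: write $A_i = \sum_\gamma e_{\gamma i} Z_\gamma - \sum e_{\gamma i}\Gamma^\alpha_{\beta\gamma}e_{\beta\delta}\partial_{e_{\alpha\delta}}$, bracket with the inductively-understood expression for $W$, and collect terms. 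Differentiating the $x$-dependent coefficients $e_{\gamma i}\,(\text{const in }x)$ contributes nothing in the $x$-directions beyond $[Z,\cdot]$ brackets; differentiating in the $e$-directions hits only the vertical-correction part, which after applying $\pi_*$ disappears; and the remaining genuinely new term is exactly $\sum e_{\gamma i}(\cdots)[Z_\gamma, (\text{bracket of }Z\text{'s})](x)$ plus strictly-lower-length bracket terms. Summing over all multi-indices $I$ of length $\le N$ and invoking invertibility of the $O(d)$-block repeatedly, I get
\[
\pi_*\,{\rm Lie}^{(N)}(A_1,\dots,A_d)(u) = {\rm Lie}^{(N)}(Z_1,\dots,Z_d)(\pi(u))
\]
for every $N$, hence equality of the full Lie hulls. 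Finally, by the Hörmander condition on $\cD$ (stated in Section \ref{sec.result}), ${\rm Lie}(Z_1,\dots,Z_d)(x)=T_x\cM$ for $x=\pi(u)$, which gives $\pi_*\,{\rm Lie}(A_1,\dots,A_d)(u)=T_{\pi(u)}\cM$ as desired; since $U$ was an arbitrary coordinate neighborhood and $\cP$ is covered by such charts, and the statement is local in $u$, this finishes the proof. (Here I would also remark that by the ``simple fact'' at the end of Section 1, the identification ${\rm Lie}^{(k)}(Z_1,\dots,Z_d)(x)$ is independent of the chosen local orthonormal frame, so there is no ambiguity in the comparison.)

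The main obstacle I anticipate is the bookkeeping of the vertical-correction (Christoffel) terms in the inductive step: one must verify that they never ``leak'' a new horizontal direction after applying $\pi_*$, i.e., that modulo $\ker\pi_*$ the brackets of the $A_i$'s are controlled purely by brackets of the $Z_i$'s with coefficients given by products of frame entries. This is really the assertion that $\ell_u$ is a Lie-algebra-level section up to lower order, and making the induction hypothesis strong enough to close — tracking both the leading $[Z,\dots,Z]$ term and the full tower of lower-length correction terms simultaneously — is where care is needed; the rest is linear algebra (invertibility of the $O(d)$-block) plus the already-assumed Hörmander condition downstairs.
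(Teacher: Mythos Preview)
Your approach is correct in outline but more laborious than the paper's, and contains one imprecise step. The paper uses the ``simple fact'' from the end of Section~1 not merely as a well-definedness remark but as the very first move: since $[A_1,\ldots,A_d]=[\ell\langle Z_1\rangle,\ldots,\ell\langle Z_d\rangle]\,E$ with $E=(e_{\gamma i})_{1\le\gamma,i\le d}$ a smooth $O(d)$-valued function on $\pi^{-1}(U)$, one has ${\rm Lie}(A_1,\ldots,A_d)(u)={\rm Lie}(\ell\langle Z_1\rangle,\ldots,\ell\langle Z_d\rangle)(u)$ immediately. This eliminates the fibre-varying coefficients $e_{\gamma i}$ \emph{before} any bracketing. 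From \eqref{def.horlift} one then has $\ell\langle Z_i\rangle=Z_i+V_i$ with $V_i$ vertical and, crucially, the horizontal piece $Z_i$ having fibre-independent coefficients; hence $[\ell\langle Z_i\rangle,\ell\langle Z_j\rangle]=[Z_i,Z_j]+(\text{vertical})$ exactly, with no lower-order horizontal corrections, and the induction is one line.

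Your direct induction on brackets of the $A_i$'s works too, but your claim that ``differentiating in the $e$-directions hits only the vertical-correction part, which after applying $\pi_*$ disappears'' is not right as stated: the vertical piece of $A_j$ does differentiate the coefficient $e_{\gamma i}$ sitting in front of $Z_\gamma$ in $A_i$, producing the horizontal term $\sum_\gamma(V_j e_{\gamma i})Z_\gamma$, which survives $\pi_*$. Fortunately this lies in ${\rm span}\{Z_1,\ldots,Z_d\}={\rm Lie}^{(1)}(Z_1,\ldots,Z_d)$, so it is absorbed by your lower-order-correction framework and the argument closes; but you should state the induction hypothesis as $\pi_*\,{\rm Lie}^{(N)}(A_1,\ldots,A_d)(u)={\rm Lie}^{(N)}(Z_1,\ldots,Z_d)(x)$ (an equality of subspaces, proved by double inclusion) rather than trying to track individual brackets modulo lower-order terms. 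The paper's route sidesteps this bookkeeping entirely.
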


\begin{proof}
Let  $u \in \pi^{-1} (x)$ and use the local chart as above.
Note that 
\[ 
[ A_1, \ldots, A_d] = [ \ell\la Z_1\ra, \ldots, \ell\la Z_d\ra] E,
\quad
\mbox{where } E= (e_{\alpha\beta})_{1 \le\alpha, \beta \le d}.
\]
Since $E =E(u)$ is a smooth $O (d)$-valued function in $u$,
we have 
\[
{\rm Lie} (A_1, \ldots, A_d) (u)  = 
{\rm Lie} (\ell\la Z_1\ra, \ldots, \ell\la Z_d\ra) (u).
\]
Since we have 
 $\ell\la Z_i\ra (u)= Z_i (x) + \mbox{(a vertical vector field)}$  from \eqref{def.horlift}, 
we see that 
\[
[\ell\la Z_i\ra,  \ell\la Z_j\ra] (u)= [Z_i, Z_j] (x) + \mbox{(a vertical vector field).}
\]
Repeating this, we have
\[
\pi_* \,  {\rm Lie} (\ell\la Z_1\ra, \ldots, \ell\la Z_d\ra) (u)
=
 {\rm Lie} (Z_1, \ldots, Z_d) (x).
\]
Since $\cM$ is sub-Riemannian,
the right hand side equals $T_x \cM$. 
\end{proof}

Now we turn to the (anti-)development of finite energy paths.
Let $\cH^n$ be  Cameron-Martin space over $\R^n$.
For $h \in \cH^n$ and $u\in\cP$, 
we consider the following controlled ODE (skeleton ODE):
\begin{align}\label{eq.odeFB}
d\phi_t = \sum_{i=1}^n  A_i ( \phi_t) dh^i_t, 
\qquad   \phi_0 =u,
\end{align}
and set $\psi_t = \pi (\phi_t)$.
To emphasize the dependency on $h$, we sometimes 
write $\phi(h)$ and $\psi(h)$.
It is clear that $\phi(h) \in \cH_u (\cP, \cK\oplus \cK^\perp)$
and $\psi(h) \in \cH_{\pi(u)} (\cM, T\cM)$.
Moreover, the energy is preserved, that is,
$\| h\|_{\cH^n}^2 = \cE (\phi(h)) = \cE (\psi(h))$.
It should also be noted that $\psi(h)^{\prime}_t  = \phi(h)_t \la h^{\prime}_t\ra$.

The map $h \mapsto \psi (h)$ is bijective.
The inverse of 
\[
h \in \cH^n \mapsto \phi(h) \in \cH_u (\cP, \cK\oplus \cK^\perp)
\]
is given by the line integral of $\theta$:
\[
(u_t)_{t \in [0,1]} \in \cH_u (\cP, \cK\oplus \cK^\perp)
\mapsto \int_0^{\cdot}  \theta_{u_t} \la u^\prime_t\ra   \in \cH^n.
\]

The inverse of the projection 
$\pi\colon \cH_u (\cP, \cK\oplus \cK^\perp) \to \cH_{\pi(u)} (\cM, T\cM)$
is the horizontal lift of paths.
Recall that the horizontal lift of $(x_t)_{t \in [0,1]} 
\in  \cH_{\pi(u)} (\cM, T\cM)$ is a unique 
$(u_t)_{t \in [0,1]} \in \cH_u (\cP, \cK\oplus \cK^\perp)$
such that $u_t^\prime = \ell_{u_t} \la x_t^\prime \ra $ for almost all $t$.
Locally, $(u_t)$ satisfies a simple controlled ODE as follows.
Suppose that $0 <\tau \le 1$ and that
$(x_t)_{t \in [0,\tau]}$ stays in a local chart.
Then, there exists a unique Cameron-Martin path 
$(k_t)_{t \in [0,\tau]}$ over $\R^n$ such that
\[
dx_t = \sum_{i=1}^n Z_i (x_t ) k_t^\prime dt
= \sum_{i=1}^n Z_i (x_t ) d k_t
\quad
\mbox{on $[0,\tau]$ with $x_0 =x =\pi (u)$}.
\]
Therefore, the horizontal lift $(u_t)_{t \in [0,\tau]}$ solves 
the following controlled ODE:
\[
du_t
= \sum_{i=1}^n  \ell \la Z_i \ra (u_t ) d k_t
\quad
\mbox{on $[0,\tau]$ with $u_0 =u$}.
\]
The local expression of $\ell \la Z_i \ra$ was given in \eqref{def.horlift}.
Hence, we can write down this ODE for 
$u_t =(x^1_t, \ldots, x^n_t; \{e_{\alpha\beta, t}\}_{1 \le \alpha,\beta\le n})$
concretely using these coordinates.
(However, we do not elaborate it because it is well-known and
 cumbersome.
The point here is to explain that
the lift map is explicitly computable and dependency on the data 
such as $x, k$ can be tracked.)
Thus, we have  seen that the development map $h \mapsto \phi (h)$
is bijective and preserves energy.

Restricting this correspondence to the first component,
we have the following proposition.
Note that we can naturally view  
$\cH^n = \cH^d \oplus\cH^{n-d}$ as a direct sum of Hilbert spaces.
\begin{proposition}\label{pr.iso_path}
For $h \in \cH^d$ and $u\in\cP$,  consider the following ODE:
\begin{align}\label{eq.odeFB2}
d\phi(h)_t = \sum_{i=1}^d  A_i ( \phi(h)_t) dh^i_t, 
\qquad   \phi(h)_0 =u,
\end{align}
and set $\psi(h)_t = \pi (\phi(h)_t)$.
Then, the development map $h \mapsto \psi(h)$
is an energy-preserving bijection from 
$\cH^d$ to $\cH_{\pi(u)} (\cM, \cD)$.
\end{proposition}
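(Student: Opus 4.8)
The plan is to deduce this restricted statement from the full development/anti-development correspondence on $\cH^n$ already established above, by checking that every map involved respects the splittings $\R^n = \R^d \oplus \R^{n-d}$ and $T\cM = \cD \oplus \cD^\perp$. First I regard $\cH^d$ as the subspace $\cH^d \oplus \{0\}$ of $\cH^n = \cH^d \oplus \cH^{n-d}$. For $h \in \cH^d$, the ODE \eqref{eq.odeFB2} is exactly \eqref{eq.odeFB} with $h^i \equiv 0$ for $d < i \le n$; hence $\phi(h)$ is defined on all of $[0,1]$ and, being driven only by $A_1, \dots, A_d$, satisfies $\phi(h)^\prime_t \in \cK_{\phi(h)_t}$ for almost every $t$ (recall that $\{A_i(u)\}_{i=1}^d$ is a basis of $\cK_u$). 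Applying $\pi_*$ and using that $(\pi_*)_u$ carries $\cK_u$ isometrically onto $\cD_{\pi(u)}$, we obtain $\psi(h)^\prime_t = \pi_* \phi(h)^\prime_t \in \cD_{\psi(h)_t}$, so that $\psi(h) \in \cH_{\pi(u)}(\cM, \cD)$. Energy preservation is then inherited from the full statement: $\| h\|_{\cH^d}^2 = \| h\|_{\cH^n}^2 = \cE(\phi(h)) = \cE(\psi(h))$.

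It remains to prove that $h \mapsto \psi(h)$ is a bijection onto $\cH_{\pi(u)}(\cM, \cD)$. Injectivity is immediate: $h \mapsto \psi(h)$ is already known to be injective on all of $\cH^n$, and restriction to $\cH^d$ keeps this property. For surjectivity, take $x = (x_t)_{t\in[0,1]} \in \cH_{\pi(u)}(\cM, \cD)$ and view it inside $\cH_{\pi(u)}(\cM, T\cM)$; by surjectivity of the full development map there is an $h \in \cH^n$ with $\psi(h) = x$, and the corresponding $\phi(h)$ is then the horizontal lift of $x$, so $\phi(h)^\prime_t = \ell_{\phi(h)_t}\la x^\prime_t\ra$ for a.e.\ $t$. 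Since $x^\prime_t \in \cD_{x_t}$ and $\ell_u$ maps $\cD_{\pi(u)}$ onto $\cK_u$, we get $\phi(h)^\prime_t \in \cK_{\phi(h)_t}$; feeding this into the anti-development formula $h^\prime_t = \theta_{\phi(h)_t}\la \phi(h)^\prime_t\ra$ and using $\theta_u(\cK_u) = \R^d \oplus \{0\}$ shows that $h^i$ has zero derivative a.e.\ for $d < i \le n$, hence $h^i \equiv 0$ since $h_0 = 0$; that is, $h \in \cH^d$. This proves $\psi|_{\cH^d}$ is onto.

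I do not expect a serious obstacle here: the proposition is essentially a bookkeeping corollary of the correspondence built up in this section. The only point that needs a little care is the compatibility of the three linear isometries $\ell_u$, $(\pi_*)_u$ and $\theta_u$ with the direct-sum decompositions — precisely, that $\ell_u$ maps $\cD_{\pi(u)}$ onto $\cK_u$ (equivalently $(\pi_*)_u$ maps $\cK_u$ onto $\cD_{\pi(u)}$) and that $\theta_u$ maps $\cK_u$ onto $\R^d \oplus \{0\}$. This is exactly the ``first-component restriction'' of the isometric correspondence displayed just before the statement, which itself rests on the fact that $\nabla$ preserves $\Gamma(\cD)$ and $\Gamma(\cD^\perp)$; once that is granted, all the steps above close with no further computation.
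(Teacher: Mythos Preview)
Your proposal is correct and follows exactly the approach the paper intends: the paper simply says ``Restricting this correspondence to the first component, we have the following proposition,'' and you have carefully written out what that restriction entails, verifying that each of the isometries $\theta_u$, $\ell_u$, $(\pi_*)_u$ respects the splittings so that the bijection $\cH^n \leftrightarrow \cH_{\pi(u)}(\cM, T\cM)$ restricts to $\cH^d \leftrightarrow \cH_{\pi(u)}(\cM, \cD)$.
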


%%%%%%%%%%%%%%%%%

We now provide a generalization of Chow-Rashevsky's theorem 
for future purpose. We will use this in Section \ref{sec.appen}.
\begin{proposition}\label{pr.kunita}
Let $V \in \Gamma (T\cM)$. 
Then, for every $x, y\in \cM$ and $\tau \in (0,1]$, there exists 
$k \in \cH_x (\cM, T\cM)$ such that $k_{\tau} =y$ 
and $k^\prime_t - V(k_t) \in \cD_{k_t} $ for almost all $t \in [0,\tau]$.
\end{proposition}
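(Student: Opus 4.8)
The plan is to reduce Proposition~\ref{pr.kunita} to the ordinary Chow--Rashevsky theorem by a time-dependent change of variables that absorbs the drift vector field $V$. The key observation is the following: if $(\Phi_s)_{s\in\R}$ denotes the flow of $V$ on $\cM$ (which is complete since $\cM$ is compact), then a path $k$ with $k'_t - V(k_t)\in\cD_{k_t}$ for a.e.\ $t$ is, after the substitution $k_t = \Phi_t(\gamma_t)$, essentially an admissible path $\gamma$ for a time-dependent distribution, namely the pullback of $\cD$ by $\Phi_t$. More precisely, differentiating $k_t=\Phi_t(\gamma_t)$ gives $k'_t = V(k_t) + (D\Phi_t)_{\gamma_t}\gamma'_t$, so the condition $k'_t-V(k_t)\in\cD_{k_t}$ becomes $(D\Phi_t)_{\gamma_t}\gamma'_t\in\cD_{\Phi_t(\gamma_t)}$, i.e.\ $\gamma'_t\in (D\Phi_t)^{-1}_{\gamma_t}\cD_{k_t}=:\cD^t_{\gamma_t}$, where $\cD^t:=\Phi_t^*\cD$ is again a smooth rank-$d$ distribution, depending smoothly on $t$ as well.

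The steps I would carry out are: (1) Set up the flow $\Phi_s$ of $V$ and verify the change of variables above, so that finding $k$ with $k_0=x$, $k_\tau=y$, $k'_t-V(k_t)\in\cD_{k_t}$ is equivalent to finding an absolutely continuous $\gamma$ with $\gamma_0=x$, $\gamma_\tau=\Phi_\tau^{-1}(y)=\Phi_{-\tau}(y)$, $\gamma'_t\in\cD^t_{\gamma_t}$ a.e., and $\cE(\gamma)<\infty$ (the finite-energy requirement is harmless and can be arranged by reparametrisation once we have any such $\gamma$). (2) Observe that for each fixed $t$ the distribution $\cD^t$ is a diffeomorphic image of $\cD$ under $\Phi_t$, hence satisfies the H\"ormander condition at every point; but since we have a genuinely time-dependent distribution, invoke the time-dependent version of Chow--Rashevsky: the control system $\gamma'_t=\sum_{i=1}^d u^i(t)\,Z^t_i(\gamma_t)$, where $\{Z^t_i\}$ is a (locally defined, smoothly $t$-dependent) frame of $\cD^t$, is controllable because already the ``frozen'' system at any single time $t_0$ reaches a full-dimensional set, and a small perturbation / continuity argument shows controllability persists for the time-varying system on any interval $[0,\tau]$. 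Alternatively, and perhaps more cleanly, one can enlarge the state space: consider on $\cM$ the rank-$(d{+}1)$ (actually, on $[0,\tau]\times\cM$ the rank-$(d{+}1)$, or on $\cM$ itself the family) and apply a Krener/orbit-theorem argument, or simply quote the known fact (e.g.\ from Kunita's work on stochastic flows, which the label ``kunita'' suggests) that a bracket-generating distribution perturbed by an arbitrary drift remains globally controllable. (3) Pull back: set $k_t=\Phi_t(\gamma_t)$, check $k_0=\Phi_0(x)=x$ and $k_\tau=\Phi_\tau(\Phi_{-\tau}(y))=y$, and check $k\in\cH_x(\cM,T\cM)$, i.e.\ absolute continuity and finite energy, both of which are preserved because $\Phi$ is a smooth (hence locally Lipschitz with locally Lipschitz derivative) diffeomorphism and $\cM$ is compact.

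The main obstacle is Step~(2): making the time-dependent Chow--Rashevsky argument rigorous. The cleanest route is probably to note that, since $\cM$ is compact and $[0,\tau]$ is a fixed compact interval, the H\"ormander condition for $\cD^t$ holds with a uniform ``step'' $r$ (the bracket order needed to span $T_x\cM$ is bounded on the compact set $[0,\tau]\times\cM$), and then run the standard Chow argument — concatenating integral curves of the frame vector fields $Z^t_1,\dots,Z^t_d$ and their iterated brackets, using that brackets of time-dependent fields still generate the tangent space at each frozen time — to reach, from $x$, an open neighbourhood, and then a connectedness/patching argument over $[0,\tau]$ to reach all of $\cM$. I would also double-check the degenerate-looking boundary case $\tau=1$ (no issue: the argument is identical for any fixed $\tau\in(0,1]$) and the case $V\equiv 0$ (where the statement reduces verbatim to the Chow--Rashevsky theorem already quoted as \cite[Theorem~1.14]{ri}, giving a useful sanity check).
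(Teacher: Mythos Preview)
Your proposal is correct, but it takes a longer route than the paper's proof. The paper argues in two lines: by compactness of $\cM$ one can choose finitely many global sections $B_1,\dots,B_l\in\Gamma(\cD)$ such that $\mathrm{Lie}(B_1,\dots,B_l)(x)=T_x\cM$ for every $x$, and then one considers the control-affine system with drift
\[
dk_t=\sum_{i=1}^l B_i(k_t)\,dh^i_t+V(k_t)\,dt,\qquad k_0=x,
\]
and quotes Kunita's strong complete controllability theorem (\cite[Section~5]{kun}) for such systems to find $h$ with $k_\tau=y$. Since $k'_t-V(k_t)=\sum_i B_i(k_t)(h^i)'_t\in\cD_{k_t}$ by construction, that is the whole proof. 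This is exactly the ``known fact'' you mention in passing when you note that the label suggests Kunita.

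Your main line of argument---the flow substitution $k_t=\Phi_t(\gamma_t)$ reducing the problem to a time-dependent admissibility condition $\gamma'_t\in\cD^t_{\gamma_t}$ with $\cD^t=\Phi_t^*\cD$---is computationally correct, but it does not actually simplify anything: controllability of the time-varying driftless system on $[0,\tau]$ is \emph{equivalent}, via the very same substitution run backwards, to controllability of the original drift system. So your ``main obstacle'' (Step~(2)) is precisely the Kunita-type statement the paper invokes directly. The frozen-time heuristic you sketch is not enough on its own, because Chow--Rashevsky for a fixed $\cD^{t_0}$ lets time run freely, whereas here time must advance at unit speed; turning that heuristic into a proof leads you back to the extended-state-space/orbit-theorem argument you allude to, which is essentially how results like Kunita's are proved. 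In short: you have identified the right core fact, but the detour through the flow neither avoids it nor makes it easier to prove, whereas the paper just cites it.
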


\begin{proof}
We may simply work on $[0,\tau]$ since we can just set $k$ 
to be constant on $[\tau, 1]$.
Note that if we take $l\in \N_+$ large enough, then
we can find
$B_1, \ldots, B_l \in \Gamma (\cD)$ such that
\[
{\rm Lie} (B_1, \ldots, B_l) (x) = T_x \cM
\qquad   \mbox{for every $x \in \cM$.}
\]
Let us consider the following ODE on $\cM$ controlled by an
$l$-dimensional Cameron-Martin path $h\colon [0,\tau] \to \R^l$:
\[
dk_t = \sum_{i=1}^l  B_i ( k_t) dh^i_t +V( k_t)dt, 
\qquad   k_0 =x.
\]
Thanks to the above condition on  Lie brackets of $B_i$'s, 
this controlled ODE is strongly completely controllable. 
It implies that for every $x, y, \tau$, we can find $h$ such that
the solution $k=k(h)$ satisfies $k_\tau =y$
(see \cite[Section 5]{kun} for example).
Hence, this solution $k$ is a desired path.
\end{proof}

%%%%%%%%%%%%%%%%%%%%%%%%%%%\newpage
%%%%%%%%%%%%%%%%%%%%%%%%%%

Define a second-order differential operator 
$\tilde\Delta$ on $\cM$ by 
\[
\tilde\Delta f = \mathrm{Trace}_{\cD}  
(\nabla \mathrm{grad}_{\cD} f ),
\qquad
f \in C^2(\cM).
\]
The precise meaning is as follows.
First, $\nabla \mathrm{grad}_{\cD} f \in \Gamma (T^*\cM \otimes \cD)$
and therefore
$v \mapsto \nabla_v \mathrm{grad}_{\cD} f $ can be viewed 
a linear map from $\cD_x$ to itself  at every $x\in \cD$.
The right hand side at $x$ is defined to be
 the trace of this linear map.

\begin{lemma}\label{lem.sa_vect}
Let the notation be as above. Then, 
$
\Delta_{\mathrm{sub}} -\tilde\Delta \in \Gamma (\cD).
$
\end{lemma}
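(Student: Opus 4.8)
The plan is to compare the two second-order operators $\Delta_{\mathrm{sub}}$ and $\tilde\Delta$ by computing both in a local orthonormal frame $\{Z_1,\dots,Z_d\}$ of $\cD$ over a coordinate neighborhood $U$, and to show that the difference is a first-order operator whose symbol (the vector field) automatically lies in $\Gamma(\cD)$. First I would fix such a local frame and extend it to $\{Z_1,\dots,Z_n\}$ with $\{Z_{d+1},\dots,Z_n\}$ an orthonormal frame of $\cD^\perp$, so that the connection coefficients $\Gamma^\alpha_{\beta\gamma}$ of $\nabla$ introduced above are available; recall in particular that $\Gamma^\alpha_{\beta\gamma}=0$ whenever $(\alpha,\beta)$ lies in the index set \eqref{cond.bg}, which encodes that $\nabla$ preserves both $\cD$ and $\cD^\perp$.

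The key computation is this. For $f\in C^2(\cM)$ one has $\mathrm{grad}_{\cD} f=\sum_{i=1}^d (Z_i f)\,Z_i$ since $\{Z_i\}_{i=1}^d$ is $g$-orthonormal. Then $\tilde\Delta f=\mathrm{Trace}_{\cD}(\nabla\,\mathrm{grad}_{\cD} f)=\sum_{i=1}^d \langle Z_i,\nabla_{Z_i}\mathrm{grad}_{\cD} f\rangle_{\hat g}$, and expanding $\nabla_{Z_i}\big(\sum_{j}(Z_j f)Z_j\big)$ by the Leibniz rule gives $\sum_{i=1}^d\big( Z_i(Z_i f) + \sum_{j=1}^d (Z_j f)\,\Gamma^i_{ji}\big)$, where I use $\langle Z_i,Z_\alpha\rangle_{\hat g}=\delta_{i\alpha}$ and that only $\alpha\le d$ contributes in the trace over $\cD$. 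On the other hand $\Delta_{\mathrm{sub}}=\mathrm{div}\circ\mathrm{grad}_{\cD}$ with $\mathrm{div}$ taken with respect to $\mathbf{vol}$: writing $\mathbf{vol}=\rho\,dx$ in the chart and $Z_i=\sum_k Z_i^k\partial_k$, the divergence of a vector field $W=\sum_i w^i Z_i$ expressed in the frame is $\mathrm{div}\,W=\sum_i Z_i(w^i)+\sum_i w^i\,\mathrm{div}(Z_i)$, so $\Delta_{\mathrm{sub}} f=\sum_{i=1}^d Z_i(Z_i f)+\sum_{i=1}^d (Z_i f)\,\mathrm{div}(Z_i)$. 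Subtracting, the second-order terms $\sum_i Z_i(Z_i f)$ cancel exactly, leaving
\[
(\Delta_{\mathrm{sub}}-\tilde\Delta) f = \sum_{i=1}^d (Z_i f)\Big(\mathrm{div}(Z_i)-\sum_{j=1}^d \Gamma^i_{ji}\Big) = W_U f,
\qquad W_U:=\sum_{i=1}^d \Big(\mathrm{div}(Z_i)-\sum_{j=1}^d\Gamma^i_{ji}\Big) Z_i \in \Gamma(U;\cD).
\]
This shows that on each chart $\Delta_{\mathrm{sub}}-\tilde\Delta$ is the first-order operator given by a section of $\cD$ over $U$. Since both $\Delta_{\mathrm{sub}}$ and $\tilde\Delta$ are globally defined differential operators, the local vector fields $W_U$ patch together to a global $W\in\Gamma(T\cM)$ with $W(x)\in\cD_x$ for all $x$, i.e. $W\in\Gamma(\cD)$, which is the assertion. (Alternatively, one invokes that a differential operator with vanishing principal symbol is a vector field, then checks its value at an arbitrary point $x$ using a frame, which lands in $\cD_x$.)

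The main obstacle, and the place requiring care rather than difficulty, is bookkeeping the trace "over $\cD$" correctly: one must be sure that in $\mathrm{Trace}_{\cD}(\nabla\,\mathrm{grad}_{\cD}f)$ only the $\cD$-components enter, which is why the property $\nabla_X Y\in\Gamma(\cD)$ for $Y\in\Gamma(\cD)$ — established right after \eqref{def.nabla} — is essential, and why the cross connection coefficients drop out. One should also note that $\mathrm{div}(Z_i)$ and $\Gamma^i_{ji}$ individually depend on the auxiliary choices ($\mathbf{vol}$, $\cD^\perp$, $h$, $\tilde\nabla$), so only their combination $W_U$ is canonical; but since we merely claim membership in $\Gamma(\cD)$, this dependence is harmless. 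No H\"ormander or ellipticity input is needed here — the statement is purely algebraic once the two operators are written in a common frame.
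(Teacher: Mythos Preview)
Your proof is correct and follows essentially the same route as the paper: compute both operators in a local $g$-orthonormal frame $\{Z_1,\dots,Z_d\}$ of $\cD$, observe that the second-order parts $\sum_i Z_i^2 f$ cancel, and note that the remaining first-order operator is a linear combination of $Z_1,\dots,Z_d$ and hence a global section of $\cD$. (One small index slip: in your displayed formula for $W_U$ the coefficient should read $\mathrm{div}(Z_i)-\sum_{j=1}^d \Gamma^{j}_{ij}$ rather than $\mathrm{div}(Z_i)-\sum_{j=1}^d \Gamma^{i}_{ji}$, but this does not affect the conclusion.)
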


\begin{proof}
Take a local orthonormal frame $\{ Z_1, \ldots, Z_d \}$ of $\cD$
on a coordinate neighborhood $U \subset \cM$.
Then, it is well-known that
\[
\Delta_{\mathrm{sub}}  f= \sum_{i=1}^d 
\{ Z_i^2 f+ (\mathrm{div} Z_i)Z_i f
\},
\]
where $\mathrm{div}$ stands for the divergence with respect to 
the measure $\mathbf{vol}$.

On the other hand, since 
$\mathrm{grad}_{\cD} f= \sum_{i=1}^d (Z_i f)Z_i $, 
we see from \eqref{def.omega} that
\[
\tilde\Delta f= \sum_{i=1}^d  Z_i^2 f+
 \sum_{i,j =1}^d    \hat\omega^j_i(Z_j) Z_i f
 \]
and hence
\[
\Delta_{\mathrm{sub}} - \tilde\Delta 
=
\sum_{i=1}^d  (\mathrm{div} Z_i)Z_i 
- \sum_{i,j =1}^d    \hat\omega^j_i(Z_j) Z_i  \,\, \in \Gamma (U; \cD).
\]
The left hand side is a globally defined at most second-order 
operator.
However, as the right hand side shows, its  second-order part vanishes.
Hence, this is a globally defined first-order 
operator, i.e. a vector field.
\end{proof}

\begin{lemma}\label{lem.op_rel}
Let the notation be as above. Then, we have 
\[
\sum_{i=1}^d A_i^2 (\pi^* f) = \pi^* (\tilde\Delta f),
\qquad
f \in C^2(\cM).
\]
Here, $\pi^* f := f \circ \pi \in  C^2(\cP)$ is the pullback of $f$ by 
the projection $\pi\colon \cP \to \cM$.
\end{lemma}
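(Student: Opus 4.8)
The plan is to verify the identity in a local coordinate chart $\pi^{-1}(U) \cong U \times G$, using the explicit expression \eqref{eq.loc_ex_A} for the canonical horizontal vector fields $A_i$, and then to recognize the resulting expression as the local formula for $\tilde\Delta f$ established in the proof of Lemma \ref{lem.sa_vect}. Since both sides of the claimed identity are globally defined (the left side because the $A_i$ are global vector fields on $\cP$ and $\pi^* f$ is a global function, the right side trivially), it suffices to check equality on each such chart. Fix $x \in U$ with a local orthonormal frame $\{Z_1,\dots,Z_d\}$ of $\cD$ and $\{Z_{d+1},\dots,Z_n\}$ of $\cD^\perp$, and write $u = (x; \{e_{\alpha\beta}\})$ in the induced coordinates.

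First I would compute $A_i(\pi^* f)$ for $1 \le i \le d$. Since $\pi^* f = f \circ \pi$ depends only on the base coordinates $x^1,\dots,x^n$, the vertical part $-\sum_{\alpha,\beta,\delta}\Gamma^\alpha_{\beta\gamma}(x) e_{\beta\delta}\,\partial/\partial e_{\alpha\delta}$ of $A_i$ in \eqref{eq.loc_ex_A} annihilates $\pi^* f$, so $A_i(\pi^* f)(u) = \sum_{\gamma=1}^n e_{\gamma i}\,(Z_\gamma f)(x) = \pi^*\!\big((\sum_\gamma e_{\gamma i} Z_\gamma) f\big)(u)$. The key point is that, because $u$ is a frame respecting the splitting $\cD_x \oplus \cD_x^\perp$, for $1 \le i \le d$ only the indices $\gamma \le d$ contribute, i.e. $\sum_{\gamma=1}^n e_{\gamma i}Z_\gamma = \sum_{\gamma=1}^d e_{\gamma i}Z_\gamma =: W_i$, which is a section of $\cD$.

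Next I would apply $A_i$ a second time. Writing $g_i := A_i(\pi^* f) = \sum_{\gamma=1}^d e_{\gamma i}\,\pi^*(Z_\gamma f)$, I compute $A_i g_i(u)$ using \eqref{eq.loc_ex_A}. The horizontal (base) part of $A_i$ acting on $g_i$ produces $\sum_{\gamma}e_{\gamma i}\sum_\rho e_{\rho i}(Z_\rho Z_\gamma f)(x)$. The vertical part of $A_i$ now acts nontrivially on the coefficients $e_{\gamma i}$: applying $-\sum_{\alpha,\beta,\delta}\Gamma^\alpha_{\beta\sigma}(x)e_{\beta\delta}\,\partial/\partial e_{\alpha\delta}$ (summed against $e_{\sigma i}$) to $e_{\gamma i}$ picks out $\delta = i$, $\alpha = \gamma$, giving $-\sum_{\sigma,\beta} e_{\sigma i}\Gamma^\gamma_{\beta\sigma}(x) e_{\beta i}\,\pi^*(Z_\gamma f)$. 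Summing over $i$ from $1$ to $d$ and using the orthogonality relation $\sum_{i=1}^d e_{\rho i} e_{\gamma i} = \delta_{\rho\gamma}$ for $\rho,\gamma \le d$ (valid because the $(d\times d)$-block of $e$ lies in $O(d)$, together with the block-diagonal structure \eqref{cond.bg}), the second-order terms collapse to $\sum_{\gamma=1}^d (Z_\gamma^2 f)(x)$, and the first-order terms collapse to $-\sum_{\gamma,\beta=1}^d \delta_{\sigma\beta}\cdots$ — more precisely, using $\Gamma^\alpha_{\beta\sigma} = \hat\omega^\alpha_\beta\langle Z_\sigma\rangle$ and the same orthogonality, one obtains exactly $\sum_{i,j=1}^d \hat\omega^j_i\langle Z_j\rangle\,(Z_i f)$ after relabeling indices. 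Comparing with the formula $\tilde\Delta f = \sum_{i=1}^d Z_i^2 f + \sum_{i,j=1}^d \hat\omega^j_i(Z_j) Z_i f$ derived in the proof of Lemma \ref{lem.sa_vect} yields $\sum_{i=1}^d A_i^2(\pi^* f)(u) = (\tilde\Delta f)(x) = \pi^*(\tilde\Delta f)(u)$, as required.

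The main obstacle is purely bookkeeping: correctly tracking the action of the vertical part of $A_i$ on the frame entries $e_{\gamma i}$ in the second differentiation, and then carrying out the contraction $\sum_{i=1}^d e_{\rho i}e_{\gamma i} = \delta_{\rho\gamma}$ with the correct index ranges — this is where the block-diagonal structure of $G = O(d)\times O(n-d)$ inside $O(n)$ and the antisymmetry $\Gamma^\alpha_{\beta\gamma} = -\Gamma^\beta_{\alpha\gamma}$ both get used. There is no conceptual difficulty; the identity is the standard fact that the sum of squares of the canonical horizontal vector fields (Bochner horizontal Laplacian) projects to the connection Laplacian on the base, here in the sub-Riemannian adaptation where only the first $d$ horizontal vector fields are summed and $\tilde\Delta$ is the corresponding partial trace.
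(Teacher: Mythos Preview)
Your proposal is correct and follows essentially the same route as the paper: work in the local chart $\pi^{-1}(U)\cong U\times G$, use the explicit formula \eqref{eq.loc_ex_A} for $A_i$, compute $A_i(\pi^*f)$ and then $A_i^2(\pi^*f)$, and match the result against the local expression for $\tilde\Delta f$. The only organizational difference is that the paper keeps two indices $i,j$ separate and first identifies $A_jA_i(\pi^*f)(u)$ with the bilinear form $\langle\nabla_{u\langle\mathbf e_j\rangle}\mathrm{grad}_\cD f,\,u\langle\mathbf e_i\rangle\rangle_\cD$ before setting $i=j$ and summing, whereas you set $i=j$ immediately and use the orthogonality $\sum_{i=1}^d e_{\rho i}e_{\gamma i}=\delta_{\rho\gamma}$ together with the formula for $\tilde\Delta$ from the proof of Lemma \ref{lem.sa_vect}; both routes invoke the antisymmetry $\Gamma^\alpha_{\beta\gamma}=-\Gamma^\beta_{\alpha\gamma}$ at the same point and yield the same computation.
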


\begin{proof}
We work with the local chart
$(x^1, \ldots, x^n; \{e_{\alpha\beta}\}_{1 \le \alpha,\beta\le n})$
on $\pi^{-1} (U) \cong U \times G$ as above.
We see from the local expression \eqref{eq.loc_ex_A} that,
for $1 \le i,j \le d$,
\begin{align*}
A_i (\pi^* f) &= \sum_{\gamma =1}^d e_{\gamma i} 
 \pi^* (Z_\gamma f),
 \\
A_jA_i (\pi^* f) &= 
\sum_{\gamma,\delta =1}^d e_{\delta j} e_{\gamma i} 
 \pi^* (Z_\delta Z_\gamma f) 
   - \sum_{\gamma,\delta, \epsilon =1}^d 
       \Gamma^{\gamma}_{\delta\epsilon} 
          e_{\epsilon j}  e_{\delta i} \pi^* (Z_\gamma f).
\end{align*}

On the other hand, noting that
$\{u \la \mathbf{e}_1 \ra, \ldots, u \la \mathbf{e}_d\ra\}$
is an orthonormal basis at $\pi (u)$,
we compute as follows:
\begin{align*}
\bigl\la
\nabla_{u \la \mathbf{e}_j\ra} \mathrm{grad}_{\cD} f, 
u \la \mathbf{e}_i \ra
\bigr\ra_{\cD}
&=
\sum_{\gamma, \delta, \epsilon =1}^d e_{\epsilon j}  e_{\delta i} 
\la
\nabla_{Z_{\epsilon}} (Z_\gamma f) Z_\gamma, 
Z_{\delta} \ra_{\cD}
\\
&=
\sum_{\delta, \epsilon =1}^d e_{\epsilon j}  e_{\delta i} 
Z_{\epsilon}  Z_{\delta} f 
- \sum_{\gamma, \delta, \epsilon =1}^d 
e_{\epsilon j}  e_{\delta i} 
\Gamma^{\gamma}_{\delta\epsilon}  Z_\gamma f
\\
&= A_jA_i (\pi^* f)(u),
\end{align*}
where we used $\Gamma^{\gamma}_{\delta\epsilon}
= - \Gamma^{\delta}_{\gamma\epsilon}$.  
Setting $i=j$ and summing them up, we prove the lemma.
\end{proof}

Now we introduce an SDE on $\cP$.
For $V \in \Gamma (T\cM)$, we set $A_0 \in \Gamma (T\cP)$ by 
\[
A_0 (u) = \ell_u \la V_0 (\pi(u))\ra, \quad u \in \cP, \quad
\mbox{where $V_0 : = V + 
   (\Delta_{\mathrm{sub}} -\tilde\Delta)/2 \in \Gamma(T\cM)$.}
\]
Let $(w_t)_{t \in [0,1]}$ be a standard
 $d$-dimensional Brownian motion
and 
consider the following Stratonovich-type SDE
for $u \in \cP$ and $0< \ve \le 1$:
\begin{equation}\label{eq.sde_on_P}
U_t^{\ve} = \ve \sum_{i=1}^d A_i (U_t^{\ve}) \circ dw^i_t + 
\ve^2 A_0 (U_t^{\ve})dt, \qquad
U_0^{\ve} =u.
\end{equation}
From the scaling property of Brownian motion, the two processes
$(U_t^{\ve})$ and $(U^1_{\ve^2 t})$ have the same law.
We will write $X_t^{\ve} :=\pi(U_t^{\ve})$.

\begin{lemma}\label{lem.sdeP}
Let $x \in \cM$. Choose $u \in \pi^{-1} (x)$ and 
consider SDE \eqref{eq.sde_on_P}. 
Then, the law of 
the process $(X_t^{\ve})_{t \in [0,1]}$ is independent of the choice of $u$
and this process is a diffusion process on $\cM$
associated with the generator 
$\ve^2 (\Delta_{\mathrm{sub}}/2 +V)$.
\end{lemma}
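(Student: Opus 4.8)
The plan is to exploit the fact that the diffusion $(U_t^\ve)$ is constructed on the principal bundle $\cP$, and to descend the analysis to $\cM$ via the projection $\pi$. First I would verify the independence of the law of $(X_t^\ve)$ on the choice of $u \in \pi^{-1}(x)$. This follows from equivariance: the canonical horizontal vector fields $A_i$ are invariant under the right $G$-action in the sense that $(R_a)_* A_i(u) = \sum_j a^j_i A_j (ua)$ (or rather the appropriate version for the first $d$ of them, since $G$ preserves the splitting $\R^d \oplus \R^{n-d}$), and similarly $A_0$ is $R_a$-related to itself because $\ell_{ua} = (R_a)_* \circ \ell_u$. Consequently, if $U^\ve$ solves \eqref{eq.sde_on_P} from $u$, then $R_a(U^\ve)$ solves the same equation from $ua$ but driven by the rotated Brownian motion $a^{-1} w$, which has the same law as $w$ since $a \in O(d) \subset O(n)$; hence $\pi(R_a U^\ve) = \pi(U^\ve)$ has a law independent of the starting frame over $x$.

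Next I would identify the generator of $(X_t^\ve)$. By It\^o's formula applied to $\pi^* f = f \circ \pi$ for $f \in C^\infty(\cM)$, the generator of $(U_t^\ve)$ acting on $\pi^* f$ is $\ve^2 \bigl( \tfrac12 \sum_{i=1}^d A_i^2 (\pi^* f) + A_0 (\pi^* f) \bigr)$ — this is exactly where the Stratonovich convention pays off, turning the sum of squares into the correct second-order operator with no spurious first-order correction. Now Lemma \ref{lem.op_rel} gives $\sum_{i=1}^d A_i^2 (\pi^* f) = \pi^* (\tilde\Delta f)$, and by definition $A_0 (\pi^* f)(u) = (\ell_u \la V_0(\pi u) \ra)(\pi^* f) = (V_0 f)(\pi u) = \pi^*(V_0 f)$, since the horizontal lift $\ell_u$ projects back down to the identity and the vertical part annihilates $\pi^* f$. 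Therefore the generator applied to $\pi^* f$ equals $\ve^2 \pi^*\bigl( \tfrac12 \tilde\Delta f + V_0 f \bigr)$. Substituting $V_0 = V + (\Delta_{\mathrm{sub}} - \tilde\Delta)/2$ from the definition, we get $\tfrac12 \tilde\Delta f + V_0 f = \tfrac12 \tilde\Delta f + V f + \tfrac12(\Delta_{\mathrm{sub}} - \tilde\Delta) f = \tfrac12 \Delta_{\mathrm{sub}} f + V f$, which is precisely the claim. Note that Lemma \ref{lem.sa_vect} guarantees $\Delta_{\mathrm{sub}} - \tilde\Delta$ really is a vector field, so $A_0$ and hence the SDE \eqref{eq.sde_on_P} make sense.

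The only point requiring a little care — and the main obstacle — is making rigorous the passage from ``generator on functions of the form $\pi^* f$'' to ``$(X_t^\ve)$ is the diffusion with generator $\ve^2(\Delta_{\mathrm{sub}}/2 + V)$''. The clean way is the martingale-problem formulation: for every $f \in C^\infty(\cM)$, the process $f(X_t^\ve) - f(X_0^\ve) - \ve^2 \int_0^t (\tfrac12 \Delta_{\mathrm{sub}} + V) f (X_s^\ve)\, ds = \pi^* f(U_t^\ve) - \pi^* f(U_0^\ve) - \int_0^t [\text{generator}](\pi^* f)(U_s^\ve)\,ds$ is a martingale (with respect to the filtration of $w$), directly from It\^o's formula for $U^\ve$. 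Since $\ve^2(\Delta_{\mathrm{sub}}/2 + V)$ is a hypoelliptic operator by the H\"ormander condition on $\cD$, its martingale problem is well-posed, so the law of $(X_t^\ve)$ is the unique such diffusion measure; this also re-confirms the independence from $u$. One should also remark that the projection is well-defined for all time because $\cM$ (hence $\cP$, since $G$ is compact) is compact, so no explosion occurs. I would present this via the martingale characterization rather than attempting to compute transition densities directly.
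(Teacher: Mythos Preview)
Your proposal is correct and follows essentially the same route as the paper: rotational invariance of Brownian motion via the $G$-equivariance of the $A_i$ for independence of the starting frame, then It\^o's formula together with Lemma~\ref{lem.op_rel} and the definition of $V_0$ to identify the generator. The paper's proof is much terser (two sentences), whereas you spell out the equivariance and supply the martingale-problem argument to justify that $(X_t^\ve)$ is genuinely the diffusion with the stated generator; this extra care is appropriate but not a different method.
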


\begin{proof}
By the same reason (the rotational invariance of $(w_t)$)
as in the Riemannian case, the law of 
$(X_t^{\ve})_{t \in [0,1]}$ is independent of the choice of $u$.
By It\^o formula, one can show that
the generator of $(U_t^{\ve})$ is $\ve^2 (\tfrac12 \sum_{i=1}^d A_i^2 + A_0)$.
From this and Lemma \ref{lem.op_rel}, we can easily see that
$(X_t^{\ve})$ is still a diffusion process 
and its generator is $\ve^2 (\Delta_{\mathrm{sub}}/2 +V)$.
\end{proof}

\begin{remark}\label{rem.embed}
For manifold-valued SDEs,  
the manifold need not be a submanifold of a Euclidean space.
However, for manifold-valued RDEs and Malliavin calculus,
the manifold is usually embedded into a Euclidean space.
Therefore, in what follows we choose an embedding 
$\iota\colon \cP \hookrightarrow \R^M$ 
and extend the vector fields $A_i$, $0 \le i \le d$, 
to $C^\infty$ vector fields on $\R^M$ with compact support.
In this way,  we may view SDE \eqref{eq.sde_on_P} 
as an SDE on $\R^M$.
For our purpose, any $M$ and $\iota$ will do.
(Recall that, thanks to the tubular neighborhood theorem, 
every smooth function or vector field on $\cP$
extends to smooth function or vector field
 on $\R^M$ with compact support, respectively.)
\end{remark}

%%%%%%%%%%%%%%%%%%%%%%%%%%%\newpage
%%%%%%%%%%%%%%%%%%%%%%%%%%
\section{Preliminaries from stochastic analysis}
\label{sec.prob}

In this section we recall several important probabilistic results
which we will use in the proof of our main results.
Basically, all results in this section (except Lemma \ref{lem.dMC_dense}) are either known or easily derived from known facts.

\subsection{Elements of  (manifold-valued) Malliavin calculus}
\label{subsec.MC}
We first recall Watanabe's theory of 
generalized Wiener functionals (i.e. Watanabe distributions) in Malliavin calculus.
Most of the contents and the notations
in this section are contained in 
 \cite[Sections V.8--V.10]{iwbk} with trivial modifications.
Also, \cite{sh, nu, hu, mt} are good textbooks of Malliavin calculus.
For manifold-valued Malliavin calculus, see \cite{ta}.
For basic results of quasi-sure analysis, we refer to \cite[Chapter II]{ma}.

Let $({\cal W}, {\cal H}, \mu)$ be the classical $d$-dimensional
Wiener space.  
(The results in this subsection also hold on any abstract Wiener space, however.)
Let us recall the following:

\begin{enumerate}
\item[{\bf (a)}]
 Basics of Sobolev spaces ${\bf D}_{p,r} ({\cal K})$ of ${\cal K}$-valued 
(generalized) Wiener functionals, 
where $p \in (1, \infty)$, $r \in {\mathbb R}$, and ${\cal K}$ is a real separable Hilbert space.
As usual, we will use the spaces 
${\bf D}_{\infty} ({\cal K})= \cap_{k=1 }^{\infty} \cap_{1<p<\infty} {\bf D}_{p,k} ({\cal K})$, 
$\tilde{{\bf D}}_{\infty} ({\cal K}) 
= \cap_{k=1 }^{\infty} \cup_{1<p<\infty}  {\bf D}_{p,k} ({\cal K})$ of test functionals 
and  the spaces ${\bf D}_{-\infty} ({\cal K}) = \cup_{k=1 }^{\infty} \cup_{1<p<\infty} {\bf D}_{p,-k} ({\cal K})$, 
$\tilde{{\bf D}}_{-\infty} ({\cal K}) = \cup_{k=1 }^{\infty} \cap_{1<p<\infty} {\bf D}_{p,-k} ({\cal K})$ of 
 Watanabe distributions as in \cite{iwbk}.
When ${\cal K} ={\mathbb R}$, we simply write ${\bf D}_{p, r}$, etc.
\item[{\bf (b)}] Meyer's equivalence of Sobolev norms. 
(See \cite[Theorem 8.4]{iwbk}. 
A stronger version can be found in \cite[Theorem 4.6]{sh}.)
\item
[{\bf (c)}] Pullback $T \circ F =T(F)\in \tilde{\bf D}_{-\infty}$ of tempered Schwartz distribution $T \in {\cal S}^{\prime}({\mathbb R}^n)$
on ${\mathbb R}^n$
by a non-degenerate Wiener functional $F \in {\bf D}_{\infty} ({\mathbb R}^n)$. (See \cite[Sections 5.9]{iwbk}.
In fact, this is very strongly related to  Item {\bf (d)} below.)

\item
[{\bf (d)}]A generalized version of integration by parts 
 formula (IbP formula) in the sense 
of Malliavin calculus
 for Watanabe distribution,
 which is given as follows (See \cite[p. 377]{iwbk}):

For 
$F =(F^1, \ldots, F^n) \in {\bf D}_{\infty} ({\mathbb R}^n)$, we denote by 
$\sigma^{ij}_F (w) =  \la DF^i (w),DF^j (w)\ra_{{\cal H}}$
 the $(i,j)$-component of Malliavin covariance matrix ($1 \le i,j \le n$).
We denote by $\gamma^{ij}_F (w)$ the $(i,j)$-component of the inverse matrix $\sigma^{-1}_F$ (if the inverse exists in a certain sense).
Recall that $F$ is called  non-degenerate in the sense of Malliavin
if $(\det \sigma_F)^{-1} \in  \cap_{1<p< \infty} L^p$.
Note that $\sigma^{ij}_F \in {\bf D}_{\infty}$ and
$D \gamma^{ij}_F = -\sum_{k,l} \gamma^{ik}_F ( D\sigma^{kl}_F ) \gamma^{lj}_F $.
Hence, derivatives of $\gamma^{ij}_F$ can be written in terms of
$\gamma^{ij}_F$'s and the derivatives of $\sigma^{ij}_F$'s,
which implies that $\gamma^{ij}_F \in {\bf D}_{\infty}$, too.

Suppose 
$G \in {\bf D}_{\infty}$ and $T \in {\cal S}^{\prime} ({\mathbb R}^n)$.
Then, the following integration by parts holds:
\begin{align}
{\mathbb E} \bigl[
\partial_i T(F ) \cdot G 
\bigr]
=
{\mathbb E} \bigl[
T (F ) \cdot \Phi_i (\, \cdot\, ;G)
\bigr],
\label{ipb1.eq}
\end{align}
where $\Phi_i (w ;G) \in  {\bf D}_{\infty}$ is given by 
\begin{align}
\Phi_i (w ;G) &= \sum_{j=1}^d D^* 
\left( \gamma^{ij }_F \cdot G \cdot DF^j  \right) (w).
\label{ipb2.eq}
\end{align}
Note that ${\mathbb E} $ on the both sides of 
 (\ref{ipb1.eq}) is in fact  the generalized expectation, that is,
the pairing of $\tilde{{\bf D}}_{- \infty}$ and $\tilde{{\bf D}}_{\infty}$.
Here, $D$ and $D^*$ are the $\cH$-derivative
(i.e. the gradient operator in the sense of Malliavin calculus) 
and its adjoint (i.e. the divergence operator).

\item[{\bf (e)}]
If $\eta \in {\bf D}_{-\infty}$ satisfies that
$\la \eta, F\ra \ge 0$ for every non-negative $F \in {\bf D}_{\infty}$,
it is called a positive Watanabe distribution.
According to Sugita's theorem \cite{su}, for every positive Watanabe distribution $\eta$,
there uniquely exists a finite Borel measure $\mu_{\eta}$ on $\cW$
such that
\[
\la \eta, F\ra = \int_{\cW} \tilde{F} (w)  \mu_{\eta} (dw), 
\qquad F \in {\bf D}_{\infty}
\]
holds,
where $\tilde{F}$ stands for $\infty$-quasi-continuous modification of 
$F$.
If $\eta \in {\bf D}_{p, -k}$ is positive, then it holds that 
\[
\mu_{\eta}(A) \le \| \eta \|_{p,-k} {\rm Cap}_{q,k} (A)
\qquad
\mbox{for every Borel subset $A\subset \cW$,}
\]
where $p, q  \in (1, \infty)$ with $1/p +1/q =1$, $k \in \N_+$,
and ${\rm Cap}_{q,k}$ stands for the $(q,k)$-capacity
associated with ${\bf D}_{q,k}$.
(For more details, see \cite[Chapter II]{ma}.)
\end{enumerate}

%%%%%%%%%%%%%%%%%%%%%%   %%%%%%%%%%%%%%%%%%%%%%
%%%%%%%%%%%%%%%%%%%%%%%%%%%%%%%%%%%%%%%%%%%%%
%
%
%

We will also use a localized version of the Watanabe
distribution theory,
which can be found in  \cite[pp. 216--217]{tw}.
(For proofs, see \cite[Propositions 3.1 and 3.2]{inatan}.)

Let $\rho >0$, $\xi \in {\bf D}_{\infty}$ and $F \in {\bf D}_{\infty} ({\mathbb R}^n)$
and suppose that
\begin{equation}\label{wat1.eq}
\inf_{v \in {\mathbb S}^{n-1}}    v^* \sigma_F v  \ge \rho
\qquad
\mbox{on  \quad$\{w \in {\cal W} \mid |\xi (w)| \le 2 \}$,}
\end{equation}
where ${\mathbb S}^{n-1}$ is the unit ball of ${\mathbb R}^n$.
Let $\chi:{\mathbb R} \to {\mathbb R}$ be a smooth function 
whose support is contained in $[-1, 1]$.
Then, the following proposition holds (see \cite[Proposition 6.1]{tw}).

\begin{proposition}\label{pr.comp1}
Assume (\ref{wat1.eq}).
For every $T \in {\cal S}^{\prime}({\mathbb R}^n)$, 
$\chi(\xi) \cdot T (F) \in  \tilde{\bf D}_{-\infty}$
can be defined in a unique way so that the following properties hold:
\\
\noindent
{\rm (i)}~If $T_k \to T \in {\cal S}^{\prime}({\mathbb R}^n)$ as $k \to \infty$, 
then $\chi(\xi) \cdot T_k (F) \to \chi(\xi) \cdot T (F) \in \tilde{\bf D}_{-\infty}$.
\\
\noindent
{\rm (ii)}~If $T$ is given by $g \in {\cal S}({\mathbb R}^n)$,
then $\chi(\xi) \cdot T (F) = \chi(\xi)  g (F) \in {\bf D}_{\infty}$.
\end{proposition}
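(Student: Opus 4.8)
The plan is to reduce the statement to the standard (non-localized) pullback theory recalled in Item {\bf (c)}, exploiting that the cutoff $\chi(\xi)$ forces us to work only on the region $\{|\xi|\le 1\}$, where the Malliavin covariance of $F$ is uniformly nondegenerate by \eqref{wat1.eq}. First I would choose a smooth function $\Psi\colon\R\to[0,1]$ with $\Psi\equiv 1$ on $[-1,1]$ and $\mathrm{supp}\,\Psi\subset[-2,2]$, and a smooth $\Theta\colon\R\to\R$ with $\Theta(s)=1/s$ for $|s|\ge\rho$ and $\Theta$ bounded with bounded derivatives everywhere (so $\Theta(s)=1/s$ is only modified on the region we never see). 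Then $\tilde\gamma_F:=\Theta(\det\sigma_F)\,(\mathrm{adj}\,\sigma_F)$, the ``regularized inverse Malliavin matrix,'' lies in ${\bf D}_\infty(\R^{n\times n})$ without any nondegeneracy hypothesis, because $\det\sigma_F$ and the adjugate entries are in ${\bf D}_\infty$ and $\Theta$ is smooth with bounded derivatives. Crucially, on $\{|\xi|\le 2\}$ we have $\inf_{v\in\mathbb S^{n-1}}v^*\sigma_F v\ge\rho$, hence $\det\sigma_F\ge\rho^n$ there, so $\Psi(\xi)\,\tilde\gamma_F=\Psi(\xi)\,\gamma_F$ coincides with the genuine inverse times the cutoff. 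This is the device that lets the integration-by-parts machinery run.

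Next I would \emph{define} $\chi(\xi)\cdot T(F)$ for $T\in\mathcal S(\R^n)$ simply as the element $\chi(\xi)\,T(F)\in{\bf D}_\infty$ given by pointwise multiplication (this is forced by property~(ii)), and then establish the key Sobolev estimate: for each $p\in(1,\infty)$ and $m\in\N$ there exist $q\in(1,\infty)$, $l\in\N$, and a constant $C$ (depending on $\chi,\rho,\xi,F$ but not on $T$) such that
\begin{equation}\label{eq.keyloc}
\|\chi(\xi)\,g(F)\|_{p,-m}\le C\,\|g\|_{\mathcal S_l},\qquad g\in\mathcal S(\R^n),
\end{equation}
where $\|\cdot\|_{\mathcal S_l}$ is a Schwartz seminorm of order $l$. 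This is proved by the usual duality argument: pair $\chi(\xi)\,g(F)$ against a test functional $G\in{\bf D}_{q,m}$, write $g=(1-\Delta_{\R^n})^{l}\,h$ with $\|h\|_\infty\lesssim\|g\|_{\mathcal S_{l}}$ up to controlling polynomial growth (or more simply represent $g(y)$ via its Fourier transform and integrate by parts in $y$), and transfer each $\partial_i$ landing on $g$ onto the Wiener side using the generalized IbP formula \eqref{ipb1.eq}–\eqref{ipb2.eq}, but with the regularized object $\Psi(\xi)\tilde\gamma_F$ inserted. Because $\chi(\xi)=\chi(\xi)\Psi(\xi)$ and $\Psi(\xi)\tilde\gamma_F=\Psi(\xi)\gamma_F$ on the support of $\chi(\xi)$, each such IbP step produces a new test functional of the form $\Phi_i(\cdot\,;G')$ built from $\Psi(\xi)\tilde\gamma_F$, $DF^j$, $D^*$, and derivatives of $\xi$, all of which lie in ${\bf D}_\infty$ with norms independent of $g$. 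Iterating $l$ times and then using $|\pairing{\,\cdot\,,G}|\le\|\cdot\|_{p,-m}\|G\|_{q,m}$ yields \eqref{eq.keyloc}.

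With \eqref{eq.keyloc} in hand, property~(i) is immediate: for general $T\in\mathcal S'(\R^n)$ pick $g_k\in\mathcal S(\R^n)$ with $g_k\to T$ in $\mathcal S'$; then $(g_k)$ is Cauchy in some Schwartz seminorm $\|\cdot\|_{\mathcal S_l}$ (by the structure theorem for tempered distributions, $T$ is a finite-order derivative of a continuous function of polynomial growth, and $\mathcal S'$-convergence upgrades to convergence in the relevant seminorm after smoothing), so by \eqref{eq.keyloc} the sequence $\chi(\xi)\,g_k(F)$ is Cauchy in ${\bf D}_{p,-m}$ for every $p,m$, hence converges in $\tilde{\bf D}_{-\infty}$ to a limit we call $\chi(\xi)\cdot T(F)$; the limit is independent of the approximating sequence by linearity and \eqref{eq.keyloc} again, which also gives uniqueness of any construction satisfying (i)–(ii). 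Consistency with the unlocalized pullback of Item~{\bf (c)} when $F$ happens to be globally nondegenerate follows by the same approximation, since there $\Psi(\xi)\gamma_F$ agrees with the true $\gamma_F$ wherever $\chi(\xi)\ne0$.

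The main obstacle I anticipate is bookkeeping rather than conceptual: one must check carefully that every functional appearing after iterated integration by parts — the entries of $\Psi(\xi)\tilde\gamma_F$, the divergences $D^*(\cdots)$, the derivatives of $\xi$ and of $F$ — genuinely lies in ${\bf D}_\infty$ with seminorms one can bound uniformly in the Schwartz datum $g$, and that the loss of Sobolev regularity (the passage from $m$ to $m+$ something, and from $p$ to some $q$) is finite and controlled by the fixed order $l$ of the Schwartz seminorm dictated by the order of $T$ as a tempered distribution. Once the regularization $\Theta(\det\sigma_F)$ is introduced to kill the nondegeneracy assumption off the relevant region, this is exactly the computation underlying the classical pullback theorem, so no new analytic difficulty arises; the cited references \cite[Proposition 6.1]{tw} and \cite[Propositions 3.1 and 3.2]{inatan} carry out precisely this argument.
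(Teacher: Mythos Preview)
The paper does not prove this proposition itself but simply cites \cite[Proposition 6.1]{tw} and \cite[Propositions 3.1 and 3.2]{inatan}; your outline---regularize the inverse Malliavin matrix via a smooth $\Theta(\det\sigma_F)$ so that it lives in ${\bf D}_\infty$ globally while agreeing with $\gamma_F$ on $\{|\xi|\le 2\}$, run the iterated IbP \eqref{ipb1.eq}--\eqref{ipb2.eq} with the cutoff $\Psi(\xi)$ inserted to obtain a uniform Sobolev estimate, and extend by density from $\mathcal S$ to $\mathcal S'$---is precisely the argument carried out in those references. One cosmetic slip: the threshold for $\Theta$ should be $\rho^n$ (the lower bound on $\det\sigma_F$), not $\rho$; otherwise the proposal is correct and matches the cited approach.
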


We also provide an asymptotic  theorem. It is 
a very special case of \cite[Proposition 6.2]{tw}.
Let $\{ F_{\ve}\}_{0\le \ve \le 1} \subset {\bf D}_{\infty} ({\mathbb R}^n)$
and $\{ \xi_{\ve}\}_{0\le \ve \le 1} \subset {\bf D}_{\infty}$
 be families
of Wiener functionals such that the following asymptotics hold:
\begin{eqnarray}
 F_{\ve}  
 &=&  
 F_0 + O (\ve)
 \qquad
 \qquad \mbox{in ${\bf D}_{\infty} ({\mathbb R}^n)$ as $\ve \searrow 0$,}
  \label{wat2.eq}
  \\
 \xi_{\ve}  
 &=&  
 \xi_0 + O (\ve)
  \qquad
 \qquad \mbox{in ${\bf D}_{\infty} $ as $\ve \searrow 0$.}
  \label{wat3.eq}
 \end{eqnarray}
Here, $O (\ve)$ is the large Landau symbol.
Recall that ${\bf D}_{\infty}$ and ${\bf D}_{\infty} ({\mathbb R}^n)$
are endowed with a natural topology as Fr\'echet spaces.

\begin{proposition}\label{pr.comp2}
Assume (\ref{wat2.eq}), (\ref{wat3.eq}) and $|\xi_0| \le 1/8$.
Moreover, assume that there exists $\rho >0$ independent of $\ve$ such that 
(\ref{wat1.eq}) with $F = F_{\ve}$ and $\xi = \xi_{\ve}$ holds for 
every  $\ve \in (0,1]$.
Let $\chi:{\mathbb R} \to {\mathbb R}$ be a smooth function 
whose support is contained in $[-1, 1]$ such that $\chi (x) =1$ if $|x| \le 1/2$.
Then, we have 
\begin{eqnarray}
\lim_{\ve \searrow 0}
 \chi(\xi_{\ve}) \cdot T (F_{\ve}) = T(F_0)
  \qquad \mbox{in $\tilde{\bf D}_{-\infty}$.}
 \nn
 \end{eqnarray}
More precisely, there exists $k \in \N_+$ such that
the above convergence takes place in ${\bf D}_{p, -k}$ for every $p \in (1,\infty)$.
\end{proposition}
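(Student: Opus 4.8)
The plan is to combine a Sobolev estimate for $\chi(\xi_\ve)\cdot T(F_\ve)$ that is uniform in $\ve\in[0,1]$ with the trivial convergence obtained when $T$ is a Schwartz function, bridging the two by density. First I would record an observation that the statement tacitly needs: since $F_\ve\to F_0$ in $\mathbf{D}_\infty(\R^n)$ and $\xi_\ve\to\xi_0$ in $\mathbf{D}_\infty$, the Malliavin matrices $\sigma_{F_\ve}$ converge to $\sigma_{F_0}$ and $\xi_\ve\to\xi_0$ in probability, hence a.s. along a subsequence; on the full-measure event $\{|\xi_0|\le 1/8\}$ this yields $|\xi_\ve|<2$ for small $\ve$ along that subsequence, so \eqref{wat1.eq} forces $v^{*}\sigma_{F_\ve}v\ge\rho$ and therefore $v^{*}\sigma_{F_0}v\ge\rho$ for all unit $v$. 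Thus $F_0$ is uniformly non-degenerate, $T(F_0)\in\tilde{\mathbf{D}}_{-\infty}$ is defined in Watanabe's ordinary sense, and since $|\xi_0|\le 1/8<1/2$ gives $\chi(\xi_0)\equiv 1$ a.s., this coincides with the localized pullback $\chi(\xi_0)\cdot T(F_0)$ of Proposition \ref{pr.comp1}.

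The analytic heart is the bound $\sup_{\ve\in[0,1]}\|\chi(\xi_\ve)\cdot T(F_\ve)\|_{p,-k}<\infty$ for a single $k\in\N_+$ and every $p\in(1,\infty)$, the bound depending only on $p$ and on a fixed Schwartz-type seminorm of $T$. To prove it I would write $T=\sum_{|\alpha|\le N}\partial^{\alpha}f_\alpha$ with each $f_\alpha$ continuous of polynomial growth at most $M$, and use Proposition \ref{pr.comp1} together with iterated integration by parts \eqref{ipb1.eq}--\eqref{ipb2.eq} (first for Schwartz $T$, then by density) to transfer the derivatives $\partial^\alpha$ onto the test functional. Each pass introduces a factor $\gamma_{F_\ve}$, but $\mathrm{supp}\,\chi\subset[-1,1]$ lets us keep the factor $\chi(\xi_\ve)$ present throughout, and on $\{|\xi_\ve|\le 2\}$ one has $\sigma_{F_\ve}\ge\rho$; so, fixing once and for all a bounded smooth map $\Theta$ on symmetric matrices with $\Theta(A)=A^{-1}$ for $A\ge\rho/2$ and a cutoff $\psi\in C^\infty_c((-2,2))$ equal to $1$ on $[-1,1]$, I would replace $\gamma_{F_\ve}$ by $\widehat\gamma_\ve:=\psi(\xi_\ve)\,\Theta(\sigma_{F_\ve})$, which agrees with $\gamma_{F_\ve}$ wherever $\chi(\xi_\ve)\neq0$ but is a fixed smooth function of $(\xi_\ve,\sigma_{F_\ve})$. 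As $\xi_\ve$ and $\sigma_{F_\ve}$ have $\mathbf{D}_\infty$-norms bounded uniformly in $\ve$ by \eqref{wat2.eq}--\eqref{wat3.eq}, so do $\widehat\gamma_\ve$ and all the resulting $D^{*}$-terms; together with $\sup_\ve\mathbb{E}[\,|F_\ve|^{(M+1)p}\,]<\infty$ and Meyer's equivalence of Sobolev norms this gives the uniform bound, the case $\ve=0$ being covered by the non-degeneracy of $F_0$ above.

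The remainder is soft. For $g\in\cS(\R^n)$, continuity of composition with fixed smooth functions of polynomial growth on $\mathbf{D}_\infty$ gives $\chi(\xi_\ve)g(F_\ve)\to\chi(\xi_0)g(F_0)=g(F_0)$ in $\mathbf{D}_\infty$, hence in each $\mathbf{D}_{p,-k}$. For general $T=\sum_{|\alpha|\le N}\partial^\alpha f_\alpha$, approximate each $f_\alpha$ by $f_\alpha^{(m)}\in\cS(\R^n)$ in the weighted supremum norm $\sup_x(1+|x|)^{-(M+1)}|f_\alpha^{(m)}(x)-f_\alpha(x)|$ (mollify and truncate) and set $T^{(m)}:=\sum_\alpha\partial^\alpha f_\alpha^{(m)}\in\cS(\R^n)$; by linearity of the localized pullback and the uniform bound, $\sup_{\ve\in[0,1]}\|\chi(\xi_\ve)\cdot(T-T^{(m)})(F_\ve)\|_{p,-k}\to0$ as $m\to\infty$, and then a routine approximation argument — approximate $T$ by $T^{(m)}$ uniformly in $\ve$, then let $\ve\searrow0$ in the smooth case — gives $\chi(\xi_\ve)\cdot T(F_\ve)\to T(F_0)$ in $\mathbf{D}_{p,-k}$ for every $p$, which is the claim. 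I expect the one genuinely delicate step to be the uniform estimate of the previous paragraph: everything hinges on controlling $\gamma_{F_\ve}$ uniformly in $\ve$, and that is precisely what the cutoff $\chi(\xi_\ve)$ and the uniform non-degeneracy \eqref{wat1.eq} buy us via the regularized inverse $\widehat\gamma_\ve$. (This proposition is of course a special case of \cite[Proposition 6.2]{tw}.)
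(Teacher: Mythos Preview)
The paper does not give its own proof of this proposition: it is stated as a very special case of \cite[Proposition 6.2]{tw}, with the reader referred there (and to \cite[Propositions 3.1 and 3.2]{inatan}) for the argument. Your sketch is a correct and faithful outline of the standard proof behind that cited result---uniform negative-Sobolev estimates via localized integration by parts (with the Malliavin inverse regularized through the cutoff in $\xi_\ve$ and a smooth $\Theta$ equal to $A\mapsto A^{-1}$ on $\{A\ge\rho/2\}$), combined with the structure theorem for $\mathcal{S}'(\mathbb{R}^n)$ and a density argument. The preliminary observation that $F_0$ is itself non-degenerate (so $T(F_0)$ is well-defined and equals $\chi(\xi_0)\cdot T(F_0)$) is the right first step.
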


Let us quickly review manifold-valued Malliavin calculus.
Malliavin calculus for SDEs on manifolds was 
developed by Taniguchi \cite{ta}.
Roughly speaking, under suitable assumptions,
almost all of important results 
in the Euclidean case still hold true in the manifold case
with natural modifications.

Let  ${\cal N}$ be a compact manifold
of dimension $m$, which is 
equipped with a smooth volume  $\mathbf{vol}_{{\cal N}}$.
(A measure on ${\cal N}$ is said to be a smooth volume
if it is expressed on each coordinate chart as a strictly positive
smooth density function times the Lebesgue measure.)
Choose a Riemannian metric on $\cN$
so that the determinant of the (determinisitic) Malliavin covariance 
of $\cN$-valued functionals are well-defined.
Any choice of the Riemannian metric
and the smooth volume will do.

An $\cN$-valued Wiener functional 
$F\colon \cW \to \cN$ is said to belong to ${\bf D}_{p,k} ({\cal N})$,
$p \in (1,\infty)$ and $k \in \N+$,
if $f (F) \in {\bf D}_{p,k}$ for every $f \in C^{\infty} (\cN)$.
If $\iota\colon \cN \to\R^M$ is an embedding, then 
$F \in {\bf D}_{p,k} ({\cal N})$ holds if and only if 
$\iota(F) \in {\bf D}_{p,k} (\R^M)$ since every $f \in C^{\infty} (\cN)$
extends to a smooth function on $\R^M$ with compact support.
The same holds true for 
$F \in {\bf D}_\infty ({\cal N}) :=\cap_{k=1 }^{\infty} \cap_{1<p<\infty} {\bf D}_{p,k} ({\cal N})$.
For $F \in  \cup_{1<p<\infty} {\bf D}_{p,1} (\cN)$,
$D_h F (w) \in T_{F(w)} \cN$.
Hence, the Malliavin covariance $\sigma_F (w)$ in this case is  
a symmetric bilinear form on $T^*_{F(w)} \cN\times T^*_{F(w)}\cN$.
Thanks to the Riemannian metric, $\det \sigma_F (w)$ can still be defined.

One of two main results in \cite{ta} is as follows.
As in the Euclidean case, if $F \in {\bf D}_\infty ({\cal N})$ is non-degenerate in the sense of Malliavin, 
i.e. $(\det \sigma_F )^{-1} \in  \cap_{1<p< \infty} L^p$,
then the composition $T(F) =T\circ F \in \tilde{{\mathbf D}}_{-\infty}$
is well-defined 
as a Watanabe distribution for every distribution $T$ on $\cN$.
Moreover, the law of $F$ on $\cN$ 
has a smooth density $p_F$ function with respect to $\mathbf{vol}_{\cN}$.
In particular, 
$\delta_a ( Y^{\ve}_1)$ is a positive Watanabe distribution and 
$p_F (a) = {\mathbb E}[\delta_a (F) ]$ for every $a\in \cN$.
One should note here that $\delta_a$ and 
$ \delta_a (Y_1^{\ve})$ depend on the choice of  $\mathbf{vol}_{\cN}$.
(Since any other smooth volume can be expressed as
$\widehat{\mathbf{vol} }_{\cN} (dy) =\rho (y) \mathbf{vol}_{\cN}  (dy)$
for some strictly positive smooth function $\rho$ on $\cN$,
the delta function with respect to 
$\widehat{\mathbf{vol}}_{\cN}$ is given by $\hat{\delta}_a =\rho (a)^{-1} \delta_a$.)
The other main result in \cite{ta} is proving non-degeneracy  
for the projected solution of an SDE whose coefficient 
vector fields satisfies the partial H\"ormander condition.

From here we consider SDE \eqref{eq.sde_on_P}
(with $x \in \cM$ and $u\in \pi^{-1} (x)$)
and set $\cN$ to be either $\cM$ or $\cP$.
By Remark \ref{rem.embed}, we can easily see
from  the corresponding result in the Euclidean case 
that $U^{\ve}_t\in {\bf D}_\infty (\cP)$ for every $(\ve, t) \in [0,1]^2$
and  
$X^{\ve}_t=\pi (U^{\ve}_t) \in {\bf D}_\infty (\cM)$
and that, for every $f \in C^{\infty} (\cP)$, $p \in (1,\infty)$, $k \in \N$, 
${\bf D}_{p,k}$-norm
of $f(U^{\ve}_t)$ is bounded in $(\ve, t) \in [0,1]^2$.
In Lemma \ref{lem.partHor}, we checked the partial H\"ormander condition.
Therefore, $X^{\ve}_t$ is non-degenerate for every $t, \ve \in (0,1]$.
More precisely, the following Kusuoka-Stroock's  estimate is known:
There exist a constant $\nu >0$ independent of $p$
and a constant $C_p >0$
such that, for every $1 <p<\infty$, 
\begin{equation}\label{eq.KS_est}
\| (\det \sigma_{X^{\ve}_1 })^{-1}  \|_{L^p}  \le C_p \ve^{-\nu},
\qquad
 \ve\in (0,1].
\end{equation}

Combining this with Lemma \ref{lem.sdeP}, 
the transition probability of
$\ve^2 (\Delta_{\mathrm{sub}}/2 +V)$-diffusion 
has a density $p^{\ve}_t (x, a)$
with respect to $\mathbf{vol} (da)$, which is smooth in $a \in \cM$.
It holds that
\begin{equation}\label{eq.Wat_rep}
p^{\ve}_t (x, a) =p^{1}_{\ve^2 t} (x, a) = 
{\mathbb E}[\delta_a (X^{\ve}_t ) ].
\end{equation}
We will show in Section \ref{sec.appen}
that $p^{\ve}_t (x, a) >0$ for all $\ve, t \in (0,1]$
and $x, a \in \cM$,
which enables us to define the pinned diffusion measure
associated with $\ve^2 (\Delta_{\mathrm{sub}}/2 +V)$
from every $x$ to every $a$.
(We will later make sure that the measure actually exists.)

%%%%%%%%%%%%%%%%%%%%%%%%%%%\newpage
%%%%%%%%%%%%%%%%%%%%%%%%%%

\subsection{Elements of rough path theory}

In this subsection we recall the geometric rough path space 
with H\"older or Besov norm 
and quasi-sure properties of the rough path lift.
For basic properties of geometric rough path space, 
we refer to \cite{lcl, fvbk}.
For the geometric rough path space with Besov norm, 
we refer to \cite[Appendix A.2]{fvbk}.
The quasi-sure properties of the 
rough path lift is summarized in \cite{in1}.
In this paper we assume  
$\alpha \in (1/3, 1/2)$ for the H\"older parameter.
We also assume that 
the Besov parameter $(\alpha, 4m)$ satisfy  the following 
conditions:
\begin{equation}\label{eq.amam}
\frac13 <\alpha < \frac12, \quad m \in \N_+, \quad
\al - \frac{1}{4m} > \frac13, \quad
 4m (\frac12 -\alpha)   >1.
\end{equation}
We work in Lyons' original formulation of RDEs (see \cite{lcl}),
but we basically study the first level paths of solutions only.
For brevity we will write $\lambda^\ve_t :=\ve^2 t$ for $\ve \in (0,1]$.
%

%%%%%%%%%%%%%%%%%%%%%%%%%%%%%%%%%%%%%%%%%%%%%%%%%%%%%%%%%%%%%%
%\vspace{20mm}
%%%%%%%%%%%%%%%%%%%%%%%%%%%%%%%%%%%%%%%%%%%%%%%%%%%%%%%%%%%%%%

We denote by  
$G\Omega^H_{\alpha} ( {\mathbb R}^d)$  the $\alpha$-H\"older 
geometric rough path space over ${\mathbb R}^d$.
A generic element of $G\Omega^H_{\alpha} ( {\mathbb R}^d)$
is denoted by ${\bf w} =({\bf w}^1, {\bf w}^2)$.
For $\beta \in (0,1]$,
let $C_0^{\beta-H}([0,1], {\mathbb R}^k)$
be the Banach space of all ${\mathbb R}^k$-valued $\beta$-H\"older continuous paths 
that start at $0$.
If $\alpha + \beta >1$, then the Young pairing 
\[
 G\Omega^H_{\alpha} ( {\mathbb R}^d) \times C_0^{\beta-H}([0,1], {\mathbb R}^k)
  \ni ({\bf w}, \lambda) \mapsto
  ({\bf w}, \bm{\lambda}) 
     \in  G\Omega^H_{\alpha} ( {\mathbb R}^{d+k})
     \]
is a well-defined, locally Lipschitz continuous map.
(See \cite[Section 9.4]{fvbk} for example.)

%\vspace{20mm}

Now we consider a system of RDEs driven by the Young pairing
 $({\bf w}, \bm{\lambda}) \in  G\Omega^H_{\alpha} ( {\mathbb R}^{d+1})$
of ${\bf w} \in G\Omega^H_{\alpha} ( {\mathbb R}^{d})$
and 
$\lambda \in C_0^{1-H}([0,1], {\mathbb R}^1)$.
(The main example we have in mind is $\lambda_t = \mbox{const} \times t$.)
For vector fields $V_{i}: {\mathbb R}^n \to {\mathbb R}^n$ ($0 \le i \le d$), consider
\begin{equation}\label{rde_x.def}
dx_t = \sum_{i=1}^d  V_i ( x_t) dw_t^i + V_0 ( x_t) d\lambda_t,
\qquad
 x_0 =x \in \R^n.
\end{equation}
The RDEs for the Jacobian process and its inverse are given as follows;
\begin{eqnarray}
dJ_t &=& \sum_{i=1}^d  \nabla V_i ( x_t) J_t dw_t^i + \nabla V_0 ( x_t) J_t d\lambda_t,
\qquad
J_0 ={\rm Id}_n,
\label{rde_J.def}
\\
dK_t &=& - \sum_{i=1}^d K_t  \nabla V_i ( x_t)  dw_t^i  - K_t \nabla V_0 ( x_t) d\lambda_t,
\qquad
K_0 ={\rm Id}_n.
\label{rde_K.def}
\end{eqnarray}
Note that
$J, K,$ and  $\nabla V_i $ are ${\rm Mat}(n,n)$-valued.
Here, ${\rm Mat}(n,m)$ stands for the set of all real $n\times m$ matrices and ${\rm Id}_n$ stands for the identity matrix of size $n$.

For simplicity we
assume that $V_i$, $0 \le i \le d$, is of $C_b^\infty$, 
that is, when viewed as an $\R^n$-valued function,
$V_i$ is a bounded smooth function with bounded derivatives 
of all order.
It is then known that a unique global solution of (\ref{rde_x.def})--(\ref{rde_K.def}) 
exists for any ${\bf w}$ and $\lambda$ . 
Moreover, Lyons' continuity theorem holds.
In that case, the following map is continuous:
\[
 G\Omega^H_{\alpha} ( {\mathbb R}^d) \times C_0^{1-H}([0,1], {\mathbb R}^1)
  \ni ({\bf w}, \lambda) \mapsto
  ({\bf x}, {\bf J}, {\bf K}) \in G\Omega^H_{\alpha} ( {\mathbb R}^n \oplus {\rm Mat}(n,n)^{\oplus 2}).
    \]
The map $({\bf w}, \lambda) \mapsto  {\bf x}$ is denoted by 
$\Phi\colon G\Omega^H_{\alpha} ( {\mathbb R}^d) \times C_0^{1-H}([0,1], {\mathbb R}^1) 
\to G\Omega^H_{\alpha} ( {\mathbb R}^n)$.
(We adopt Lyons' formulation of RDEs as in \cite{lcl}.
So, the initial values of the first level paths 
must be adjusted.)
If $w \in C_0^{1-H}([0,1], {\mathbb R}^d)$ 
or $w \in \cH=\cH^d$
and ${\bf w}$ is its natural lift, 
then the path
\begin{equation}\label{eq.3de}
 t \mapsto ( x + {\bf x}^1_{0,t}, {\rm Id}+{\bf J}^1_{0,t}, {\rm Id}+{\bf K}^1_{0,t} )
\end{equation}
coincides with the solution of a system (\ref{rde_x.def})--(\ref{rde_K.def}) of 
ODEs understood in the usual Riemann-Stieltjes sense.
Recall that ${\bf x}^1_{0,t}$ is the first level path of ${\bf x}$ evaluated at $(0,t)$.
Keep in mind that
$({\rm Id}+{\bf J}^1_{0,t})^{-1}= {\rm Id}+{\bf K}^1_{0,t}$ always holds.

When ${\bf w}$ is Brownian rough path ${\bf W}$
i.e. the natural lift of $(w_t)$, and $\lambda= \lambda^1$, 
the process in
\eqref{eq.3de} coincides $\mu$-a.s. with the corresponding system of 
usual Stratonovich SDEs with drift.
In this case $x_t:=x + {\bf x}^1_{0,t}  \in {\bf D}_\infty (\R^n)$ for every $t$.
If $G\colon \R^n \to \R^m$ is a smooth map 
with bounded derivatives 
of all order ($\ge 1$),  then 
$G(x_t) \in {\bf D}_\infty (\R^m)$ and for every $h \in \cH$ and $t$,
it holds that
\begin{equation}\label{eq.Dxt}
D_h G(x_t) 
= (\nabla G) (x + {\bf x}^1_{0,t})
({\rm Id}+{\bf J}^1_{0,t})
\int_0^t ({\rm Id}+{\bf K}^1_{0,s}) {\bf V} ( x + {\bf x}^1_{0,s}) dh_s,
\quad\mbox{a.s.}
 \end{equation}
(with ${\bf w}= {\bf W}$).
Here, we view ${\bf V} :=[V_1, \ldots, V_d] \in  {\rm Mat}(n,d)$
and $\nabla G \in  {\rm Mat}(m,n)$.

%%%%%%%%%%%%%%%%%%%%%%%%%%%%%%%%%%%%%%%%%%%%%%%%%%%%%%%%%%%%%%

We define a continuous function
\begin{equation}\label{def.Gamma}
\Gamma\colon  G\Omega^H_{\alpha} ( {\mathbb R}^d) \times C_0^{1-H}([0,1], {\mathbb R}^1) \times [0,1]
\to {\rm Mat}(m,m)
\end{equation}
as follows:
Set 
\[
\Gamma ({\bf w}, \lambda)_t 
=(\nabla G) (x + {\bf x}^1_{0,t}) ({\rm Id}+{\bf J}^1_{0,t}) 
C ({\bf w}, \lambda)_t ({\rm Id}+{\bf J}^1_{0,t})^*(\nabla G) (x + {\bf x}^1_{0,t})^*,
\]
where
\[
C ({\bf w}, \lambda)_t
:=
\int_0^t  ({\rm Id}+{\bf K}^1_{0,s}) {\bf V} ( x + {\bf x}^1_{0,s})  
{\bf V}( x + {\bf x}^1_{0,s})^*  ({\rm Id}+{\bf K}^1_{0,s})^* ds.
\]
Here, the superscript $*$ stands for the transpose of a matrix.
Then, \eqref{eq.Dxt} implies that when ${\bf w}= {\bf W}$ and $\lambda=\lambda^1$, 
Malliavin covariance matrix of 
$G(x + {\bf x}^1_{0,t})$ equals $\Gamma ({\bf W}, \lambda^1)_t$, 
$\mu$-a.s.
Similarly for $h \in \cH$, the deterministic 
Malliavin covariance matrix of 
$G(x + {\bf x}^1_{0,t})$ equals $\Gamma ({\bf h}, \lambda^1)_t$,
where ${\bf w}={\bf h}$ is the natural rough path lift of $h$.
(When $G$ is the identity map, these formulas are well-known.
The general  case is just a straightforward modification.)

For $(\al, 4m)$ which satisfies \eqref{eq.amam}, 
$G\Omega^B_{\alpha, 4m} ( {\mathbb R}^d)$ denotes the geometric rough path space 
over ${\mathbb R}^d$ with $(\al, 4m)$-Besov norm.
Recall that the distance on this space is given by 
\begin{align}
d({\bf w}, \hat{\bf w}) 
&= \| {\bf w}^1- \hat{\bf w}^1 \|_{\al, 4m-B} 
+\| {\bf w}^2- \hat{\bf w}^2 \|_{2\al, 2m-B}
\nn\\
&
:=
\Bigl(
\iint_{0 \le s <t \le 1}  \frac{ | {\bf w}^1_{s,t}- \hat{\bf w}^1_{s,t}|^{4m}}
{|t-s|^{1 +4m\al }} 
dsdt
\Bigr)^{1/4m}
+
\Bigl(
\iint_{0 \le s <t \le 1}  \frac{ | {\bf w}^2_{s,t}- \hat{\bf w}^2_{s,t}|^{2m}}
{|t-s|^{1 +4m\al }} 
dsdt
\Bigr)^{1/2m}.
\nn
\end{align}
By the Besov-H\"older embedding theorem for rough path spaces,
there is a continuous embedding $G\Omega^B_{\alpha, 4m} ( {\mathbb R}^d)
 \hookrightarrow G\Omega^H_{\alpha -(1/4m)} ( {\mathbb R}^d)$.
If $\al < \al' <1/2$, there is a continuous embedding
$G\Omega^H_{\alpha'} ( {\mathbb R}^d)
\hookrightarrow G\Omega^B_{\alpha, 4m} ( {\mathbb R}^d)$.
Basically, we will not write the first embedding explicitly.
(For example, if we write $\Phi({\bf w}, \lambda)$ for  
$({\bf w}, \lambda) \in G\Omega^B_{\alpha, 4m} ( {\mathbb R}^d) \times C_0^{1-H}([0,1], {\mathbb R}^1)$,
then it is actually the composition of the first embedding map above and $\Phi$ with respect to 
$\{\al -1/(4m)\}$-H\"older topology.)
It is known that the Young translation by $h \in {\cal H}$ works 
well on $G\Omega^B_{\alpha, 4m} ( {\mathbb R}^d)$ under (\ref{eq.amam}).
The map $({\bf w}, h) \mapsto \tau_h ({\bf w})$
is continuous from $G\Omega^B_{\alpha, 4m} ( {\mathbb R}^d) \times {\cal H}$
to $G\Omega^B_{\alpha, 4m} ( {\mathbb R}^d)$, 
where 
$ \tau_h ({\bf w})$ is the Young translation of ${\bf w}$ by $h$
(see \cite[Lemma 5.1]{in1}).

%%%%%%%%%%%%%%%%%%%%%%%%%%%%%%%%%%%%%%%%%%%%%%%%%%%%%%%%%%%%%%
%%%%%%%%%%%%%%%%%%%%%%%%%%%%%%%%%%%%%%%%%%%%%%%%%%%%%%%%%%%%%%
%\vspace{15mm}
%%%%%%%%%%%%%%%%%%%%%%%%%%%%%%%%%%%%%%%%%%%%%%%%%%%%%%%%%%%%%%
%%%%%%%%%%%%%%%%%%%%%%%%%%%%%%%%%%%%%%%%%%%%%%%%%%%%%%%%%%%%%%

Now we review quasi-sure properties of rough path lift map 
${\cal L}$ from ${\cal W}$ to $G\Omega^B_{\alpha, 4m} ( {\mathbb R}^d)$.
For $k=\N_+$ and $w \in {\cal W}$,
we denote by $w(k)$ the $k$th dyadic piecewise linear approximation of $w$ 
associated with the partition $\{ j2^{-k} \mid 0 \le j \le 2^k\}$ of $[0,1]$.
We denote the natural lift of $w(k)$ by $ {\cal L} (w(k))$,
which is defined by Riemann-Stieltjes (or Young) integral.
We set 
\[
{\cal Z}_{\al, 4m} := \bigl\{ w \in {\cal W} \mid
\mbox{ $\{ {\cal L} (w(k)) \}_{k=1}^{\infty}$ is Cauchy in $G\Omega^B_{\alpha, 4m} ( {\mathbb R}^d)$} 
\bigr\}.   
\] 
We define ${\cal L}: {\cal W} \to G\Omega^B_{\alpha, 4m} ( {\mathbb R}^d)$
by ${\cal L} (w) = \lim_{m\to \infty} {\cal L} (w(k))$ if $w \in {\cal Z}_{\al, 4m}$
and  we define ${\cal L} (w)$ to be the zero rough path
 if $w \notin {\cal Z}_{\al, 4m}$.
We will use this version of ${\cal L}$, which is Borel measurable,
and write ${\bf W} := {\cal L} (w)$ (as before)
when it is regarded 
as a rough path space-valued random variable defined on ${\cal W}$.

Note that 
$\cH$ and $C_0^{\beta-H}([0,1], {\mathbb R}^d)$ with $\beta \in (1/2, 1]$
are subsets of ${\cal Z}_{\al, 4m}$
and the two definition of rough path lift coincide.
(We will often write ${\bf h}={\cal L} (h)$ for $h \in \cH$.)
Under the scalar multiplication (i.e. the dilation)
and the Cameron-Martin translation,  
 ${\cal Z}_{\al, 4m}$ is left invariant.
Moreover, $c {\cal L} (w) = {\cal L} (cw)$ and $\tau_h({\cal L} (w))= {\cal L} (w+h)$ 
for any $w \in {\cal Z}_{\al, 4m}$, $c \in \R$, and $h \in {\cal H}$. 
It is known that ${\cal Z}_{\al, 4m}^c$ is slim, that is,
 the $(p,r)$-capacity of 
this set is zero for any $p \in (1,\infty)$ and $r \in {\mathbb N}_+$.
Therefore, from a viewpoint of quasi-sure analysis, 
the lift map ${\cal L}$ is well-defined.
Moreover, 
the map ${\cal W} \ni w \mapsto {\cal L} (w) \in G\Omega^B_{\alpha, 4m} ( {\mathbb R}^d) $
is $\infty$-quasi-continuous.
(This kind of $\infty$-quasi-continuity was first shown in \cite{aida}.)

Let $\ve \in (0,1]$ be a small parameter.
We  now
recall that the unique solution of an RDE
driven by $\ve {\bf W}= {\cal L} (\ve w)$ 
coincides with that of the corresponding scaled Stratonovich SDE
given as follows:
\begin{equation}\label{sc_sde.def}
dX^{\ve}_t = \ve \sum_{i=1}^d  V_i ( X^{\ve}_t) \circ  dw_t^i  + \ve^2  V_0 (X^{\ve}_t)   dt,
\qquad
 X^{\ve}_0 =x \in {\mathbb R}^n.
\end{equation}
When necessary, we will write $X^{\ve}_t = X^{\ve}(t, x, w)$ or $X^{\ve}(t, x)$.
Then, $X^{\ve} (\,\cdot\, , x,w) = x + \Phi(\ve {\bf W}, \lambda^\ve)^1$ holds $\mu$-a.s., which means that 
 the right hand side is an $\infty$-quasi-continuous modification
of the left hand side as a 
$C^{\alpha -H}([0,1], {\mathbb R}^n)$-valued Wiener functional.
Similarly, for every $t, \ve \in (0,1]$,
$\Gamma (\ve {\bf W}, \lambda^\ve)_t 
= \ve^{-2}\sigma_{X_t^{\ve}}$ holds, $\mu$-a.s.
Therefore, not just $X^{\ve}_t$ itself, but also
its Malliavin covariance matrix $\sigma_{X_t^{\ve}}$
is a continuous function of Brownian rough path.

The skeleton ODE (without drift) associated with 
small noise problems for the above SDE
 \eqref{sc_sde.def}  is given as follows:  
 For $h \in \cH$,
\begin{equation}\label{sc_skel.def}
d\phi_t =  \sum_{i=1}^d  V_i ( \phi_t)  dh_t^i,
\qquad
\phi_0 =x \in {\mathbb R}^n.
\end{equation}
We write the unique solution $\phi = \phi (h)$ when necessary
(which equals $x + \Phi({\bf h}, 0)^1$).
The deterministic Malliavin covariance matrix of 
$\phi (h)_t$ at $h$ is denoted by $\sigma_{\phi_t}(h)$.
As is well-known, 
$\Gamma ({\bf h}, 0)_t = \sigma_{\phi_t}(h)$.
Recall that $\det \sigma_{\phi_t}(h) >0$ if and only if 
the tangent map of $\phi_t \colon \cH \to \R^n$ at $h$ is surjective.

Set $X^{\ve, h} := X^{\ve}(\,\cdot\,, x, w+(h/\ve)) 
= x + \Phi(\tau_h (\ve {\bf W}), \lambda^\ve)^1$ 
for $h \in \cH$.
In other words, $X^{\ve, h}$ uniquely solves the following 
Stratonovich SDE:
\[
dX^{\ve, h}_t =  \sum_{i=1}^d  V_i ( X^{\ve}_t) \circ  d (\ve w_t^i  + h_t)
+\ve^2  V_0 (X^{\ve}_t)   dt,
\qquad
 X^{\ve, h}_0 =x.
 \]
Small noise asymptotics of $X^{\ve, h}$ has been extensively studied.
One of basic results is the following asymptotics.
\[
X^{\ve, h}_1 = \phi (h)_1 +  \ve \Xi^h_1 + O (\ve^2) 
 \qquad \mbox{in ${\bf D}_{\infty} ({\mathbb R}^n)$ 
 as $\ve\searrow 0$},
 \]
where $\Xi^h_1$ is the element of the first order Wiener chaos
given by the following Wiener integral:
\[
\Xi_t^h (w) =  
({\rm Id}+{\bf J}({\bf h}, 0)^1_{0,t})
\int_0^t ({\rm Id}+{\bf K}({\bf h}, 0)^1_{0,s}) {\bf V} ( \phi (h)_s) dw_s.
\]
Hence, for $G\colon \R^n \to \R^m$ as above, 
\[
G(X^{\ve, h}_1) 
= G(\phi (h)_1 ) +  \ve   (\nabla G) (\phi (h)_1)  \Xi^h_1 + O (\ve^2) 
 \qquad \mbox{in ${\bf D}_{\infty} ({\mathbb R}^m)$ 
 as $\ve\searrow 0$}.
\]
Note that $(\nabla G) (\phi (h)_1)  \Xi^h_1$
belongs to the first order Wiener chaos
and therefore induces a mean-zero Gaussian measure on $\R^m$.
Its covariance matrix equals 
$\Gamma ({\bf h}, 0)_1$, which in turn equals  
the  deteministic 
Malliavin covariance matrix of $\cH\ni k \mapsto G(\phi (k)_1)$
at $h$.

The skeleton ODE (with drift) associated with the above SDE
 \eqref{sc_sde.def} with $\ve =1$ is given as follows:  
 For $h \in \cH$,
\begin{equation}\label{sc_skel.def}
d\zeta_t =  \sum_{i=1}^d  V_i ( \zeta_t)  dh_t^i  
+ V_0 (\zeta_t)   dt,
\qquad
\zeta_0 =x \in {\mathbb R}^n.
\end{equation}
We write the unique solution $\zeta = \zeta (h)$ when necessary
 (which equals $x + \Phi({\bf h}, \lambda^1)$).
The deterministic Malliavin covariance matrix of 
$\zeta (h)_t$ at $h$ is denoted by $\sigma_{\zeta_t}(h)$.
As is well-known, $\Gamma ({\bf h}, \lambda^1)_t = \sigma_{\zeta_t}(h)$.
\begin{lemma}\label{lem.dMC_dense}
Consider SDE \eqref{sc_sde.def} with $\ve =1$
and ODE \eqref{sc_skel.def}. 
We assume that at $t\in (0,1]$,
$\det \sigma_{X_t^1} >0$ holds,  $\mu$-a.s.
Then, 
$
\{ {\bf h}=\cL (h) \mid h\in \cH, \det \sigma_{\zeta_t}(h) >0\}
$
is dense in $G\Omega^H_{\alpha} ( {\mathbb R}^d)$
for any $1/3 <\alpha <1/2$.
\end{lemma}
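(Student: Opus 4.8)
The plan is to transfer the nondegeneracy from Brownian paths to Cameron--Martin paths via a support-type argument. First observe that $\det\sigma_{\zeta_t}(h) > 0$ is equivalent to surjectivity of the tangent map at $h$ of the map $\cH \ni k \mapsto \zeta(k)_t = x + \Phi(\mathbf{k},\lambda^1)^1_{0,t} \in \R^n$, exactly as in the driftless case recalled just before the lemma. So I would set
\[
\cG = \{\, \mathbf{h} = \cL(h) \mid h \in \cH,\ \det\sigma_{\zeta_t}(h) > 0 \,\} \subset G\Omega^H_\alpha(\R^d),
\]
and aim to show $\cG$ is dense. Since $\cH$ (with its rough path lift) is itself dense in $G\Omega^H_\alpha(\R^d)$, it suffices to show that $\{h \in \cH \mid \det\sigma_{\zeta_t}(h) > 0\}$ is dense in $\cH$ in the Cameron--Martin topology — or rather, it suffices to produce, near any given $\mathbf{h}_0 = \cL(h_0)$ in $G\Omega^H_\alpha$, some $h$ with the nondegeneracy property. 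The hypothesis $\det\sigma_{X^1_t} > 0$ $\mu$-a.s.\ will be the engine, combined with the quasi-sure / capacity machinery from Section~\ref{subsec.MC}: the set $\{w \in \cW \mid \det\sigma_{X^1_t}(w) = 0\}$ is $\mu$-null, but in fact one can argue it is slim (capacity zero), because $\det\sigma_{X^1_t} = \det\Gamma(\mathbf{W},\lambda^1)_t$ quasi-surely and $(\det\sigma_{X^1_t})^{-1} \in \cap_p L^p$ by the Kusuoka--Stroock estimate \eqref{eq.KS_est}, which upgrades to a $\mathbf{D}_{\infty}$-integrability statement; hence the zero set of $\det\sigma_{X^1_t}$ is slim.

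The key mechanism is then the following. The rough-path functional $\mathbf{w} \mapsto \det\Gamma(\mathbf{w},\lambda^1)_t$ is continuous on $G\Omega^H_\alpha(\R^d)$ (indeed on $G\Omega^B_{\alpha,4m}$), and for $\mathbf{w} = \cL(w)$ it agrees $\mu$-a.s.\ with $\det\sigma_{X^1_t}$, while for $\mathbf{w} = \mathbf{h}=\cL(h)$ it agrees with $\det\sigma_{\zeta_t}(h)$. So $\cG = \{\mathbf{h} = \cL(h) : h \in \cH\} \cap U$, where $U = \{\mathbf{w} \in G\Omega^H_\alpha : \det\Gamma(\mathbf{w},\lambda^1)_t > 0\}$ is open. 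Thus I must show: (i) $\{\cL(h) : h \in \cH\}$ is dense in $G\Omega^H_\alpha(\R^d)$ — this is classical (dyadic piecewise-linear approximations of $w$ lie in $\cH$ and their lifts converge to $\mathbf{W}$ quasi-surely, and more elementarily $\cH$ is dense by the standard geometric rough path construction); and (ii) the open set $U$ is nonempty and, crucially, that $\cL(\cH)$ meets it. For (ii), the slimness of $U^c \cap \cL(\cW)$ (equivalently, $\mu(U^c) = 0$ together with $\infty$-quasi-continuity of $\cL$) forces $U$ to carry full Wiener measure after pulling back; but I need more, namely that $U$ is \emph{dense} in $G\Omega^H_\alpha$, so that intersecting with the dense set $\cL(\cH)$ still gives a dense set. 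Density of $U$: since $\cL^{-1}(U)$ has full $\mu$-measure and $\cL$ is quasi-continuous, $\cL^{-1}(U)$ is "quasi-everywhere"; combined with the fact that Cameron--Martin translations act transitively enough and $U$ is invariant-ish under small perturbations, one concludes $\overline{U} = G\Omega^H_\alpha$. More directly: the support of the law of $\mathbf{W}$ is all of $G\Omega^H_\alpha(\R^d)$ (Stroock--Varadhan support theorem for rough paths), and a full-measure subset of a space whose support is everything must be dense; hence $U \supset$ (full-measure set) is dense, and being open, $U$ is a dense open set. Then $\cG = \cL(\cH) \cap U$ is the intersection of a dense set with a dense open set, hence dense.

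The step I expect to be the main obstacle is making the passage "$\det\sigma_{X^1_t} > 0$ a.s.\ $\Rightarrow$ $U$ is a dense open subset of $G\Omega^H_\alpha$" fully rigorous: a priori the a.s.\ statement only gives density of $\cL^{-1}(U)$ in $\cW$ for the uniform topology, not density of $U$ in the rough path topology, since $\cL$ is only Borel (quasi-continuous), not continuous. The clean way around this is precisely the slimness/capacity argument: $U^c$ pulled back is slim, hence avoided quasi-surely; since $\cH \subset \cZ_{\alpha,4m}$ and capacity controls exceptional sets finely, one shows that for $\infty$-q.e.\ $w$ the lift $\cL(w) \in U$, and — using that the dyadic approximants $w(k) \in \cH$ have $\cL(w(k)) \to \cL(w)$ for q.e.\ $w$ — one can for q.e.\ $w$ produce $h = w(k) \in \cH$ with $\cL(h)$ arbitrarily close to $\cL(w)$ and with $\det\sigma_{\zeta_t}(h) > 0$ (the latter by continuity of $\det\Gamma(\cdot,\lambda^1)_t$ once $\cL(w(k))$ is close enough to $\cL(w) \in U$, which is an open condition). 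Finally, density of $\{\cL(w) : w \in \cW\}$ (e.g.\ again via the support theorem, or via $\cH$-density directly) closes the argument. I would write the slimness of $\{\det\Gamma(\cdot,\lambda^1)_t = 0\}^c$-type sets carefully, citing \eqref{eq.KS_est} and the capacity estimates in \cite[Chapter II]{ma}, as this is the one genuinely technical point; everything else is a routine combination of openness of $U$, continuity of $\Gamma$, and density of $\cL(\cH)$.
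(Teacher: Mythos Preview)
Your proposal is correct and lands on essentially the paper's argument: the set $U=\{\det\Gamma(\cdot,\lambda^1)_t>0\}$ is open by continuity of $\Gamma$, has full Brownian-rough-path measure by the hypothesis, hence is dense by the support theorem; then for $\mu$-a.e.\ $w$ one has $\cL(w)\in U$ and $\cL(w(k))\to\cL(w)$, so the dyadic approximants $w(k)\in\cH$ eventually satisfy $\det\sigma_{\zeta_t}(w(k))>0$. The slimness/capacity detour you flag as the ``main obstacle'' is unnecessary: the paper uses only the a.s.\ hypothesis, since a full-measure set under a law with full support is automatically dense, and this is all that is required---no quasi-sure refinement is needed anywhere.
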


\begin{proof}
Take $m \in \N_+$ so large that $(\al +(1/4m), 4m)$ still
satisfies \eqref{eq.amam} and set 
\[
A:=\{ w \in  {\cal Z}_{\al +(1/4m), 4m} \mid \det \sigma_{X_t^{\ve}} (w)>0\} \subset \cW.
\]
This subset is of full $\mu$-measure
and hence $\cL (A)$ is of full measure with respect to the law of 
Brownian rough path.
Note that $\cL (A)\subset G\Omega^H_{\alpha} ( {\mathbb R}^d)$
due to the Besov-H\"older embedding theorem.
Thanks to the support theorem for Brownian rough path
(see \cite[Theorem 13.54]{fvbk}), $\cL (A)$ must be dense in 
$G\Omega^H_{\alpha} ( {\mathbb R}^d)$.
For every $w \in A$, we have $\lim_{k\to\infty}\cL (w(k)) = \cL (w)$
in $\alpha$-H\"older topology and therefore
\[
\lim_{k\to\infty}\sigma_{\zeta_t}(w(k)) 
= 
\lim_{k\to\infty} \Gamma (\cL(w(k)), \lambda)_t
= 
\Gamma (\cL(w), \lambda)_t
=\sigma_{X_t^1} (w).
\]
This implies that $\det \sigma_{\zeta_t}(w(k)) >0$ for large enough $k$.
This proves the lemma.
\end{proof}

%
%%%%%%%%%%%%%%%%%%%%%%%%%%%\newpage
%%%%%%%%%%%%%%%%%%%%%%%%%%
\section{Large deviations for rough path lift of positive
Watanabe distributions}
\label{sec.LDP_wd}

In this section we formulate an LDP
 for the rough path lifts of Watanabe's pull-back of 
the delta functions, from which our main theorem 
(Theorem \ref{thm.mainQ}) easily follows.

For $x \in \cM$, we take any 
$u \in \pi^{-1} (x)$ and consider SDE \eqref{eq.sde_on_P}
driven by the canonical realization of 
$d$-dimensional Brownian motion $(w_t)_{t\in [0,1]}$.
By \eqref{eq.KS_est},
$\delta_a (X^{\ve}_t) \in \tilde{\bf D}_{-\infty}$ 
is a well-defined positive Watanabe distribution for every 
$a\in\cM$.
By the positivity of the heat kernel (which will be proved in Section \ref{sec.appen}),
we see that
$p_t^{\ve}(x,a)={\mathbb E}[ \delta_a (X^{\ve}_t)] >0$ for all $t, \ve \in (0,1]$ and $x, a \in \cM$. 
By Sugita's theorem \cite{su}, 
the 
positive Watanabe distribution $\delta_a (X^{\ve}_1)$ at time $t=1$
is in fact a non-trivial finite Borel measure on ${\cal W}$,
which will be denoted by $\theta^{\ve}_{u,a}$.

Since 
${\cal L}$ is defined outside a slim set, we can lift 
the measure $\theta^{\ve}_{u,a}$ to a
measure on $G\Omega^B_{\alpha, 4m} ( {\mathbb R}^d)$.
We write 
$\nu^{\ve}_{u,a} =(\ve  {\cal L})_* [\theta^{\ve}_{u,a}]$.
Here, $\ve  {\cal L}$ is the composition of $\cL$ and
the dilation by $\ve$.
Since the complement of $\cZ_{\alpha, 4m}$ is slim, 
$\nu^{\ve}_{u,a}$ does not depend on how $\cL$ is defined 
on this complement.
We denote by $\hat\theta^{\ve}_{u,a}$ and $\hat\nu^{\ve}_{u,a}$
the normalized measure of 
$\theta^{\ve}_{u,a}$ and $\nu^{\ve}_{u,a}$, respectively.
(Since the total mass of $\theta^{\ve}_{u,a}$ or of $\nu^{\ve}_{u,a}$ 
equals ${\mathbb E}[ \delta_a (X^{\ve}_t)] >0$, this normalization is well-defined.)

Let $\phi (h)$ be the solution of ODE \eqref{eq.odeFB2} 
and write $\psi (h)= \pi (\phi (h))$.  
In what follows, we write $\cH = \cH^d$ for simplicity.
We set
\begin{equation}\label{def.Qua}
{\cal Q}^{u, a} =\{ h \in \cH \mid \psi(h)_1 =a\}.
\end{equation}
By Chow-Rashevsky's theorem 
\cite[Theorem 1.14]{ri},
there exists an admissible path 
$(x_t)_{t\in [0,1]} \in \cH_{x} (\cM, \cD)$
such that $x_1 =a$.
By Proposition \ref{pr.iso_path},
its anti-development belongs to ${\cal Q}^{u, a}$.
This implies that
${\cal Q}^{u, a} \neq \emptyset$ for any $u$ and $a$.

Define a rate function $I_1 : G\Omega^B_{\al, 4m} ({\mathbb R}^n) \to [0, \infty]$ as follows;
\begin{align}
I_1 ({\bf w}) 
= 
\begin{cases}
  \tfrac12  \|h\|^2_{\cH} & (\mbox{if ${\bf w}= {\cal L}(h)$ for some $h \in {\cal Q}^{u,a}$}), \\
    \infty &  (\mbox{otherwise}).
  \end{cases}
\nn
\end{align}
From the Schilder-type LDP for Brownian rough path \cite[Theorem 13.42]{fvbk},
we can easily see that $I_1$ is good.
Also define $\hat{I}_1  ({\bf w}) 
= I_1 ({\bf w}) - \min\{ \|h\|^2_{\cH}/2  \mid h \in {\cal Q}^{u,a}\}$,
which is also good.
From the goodness of $I_1$ and Proposition \ref{pr.iso_path}, 
we can easily see that
the minimum above exists and equals $d_{SR} (x,a)^2/2$.

Our main purpose of this section is to prove that
$\{\nu^{\ve}_{u,a} \}_{0 <\ve \le 1}$
satisfies 
an LDP of Schilder type on $G\Omega^B_{\alpha, 4m} ( {\mathbb R}^d)$
as $\ve \searrow 0$.
As we will see, our main result  easily follows from 
the following theorem.

\begin{theorem}\label{tm.ldp.dlt}
Let the notation as above.
Let $u \in \cP$ and $a \in \cM$ and 
assume (\ref{eq.amam}) for the Besov parameter $(\alpha, 4m)$.
Then, the following {\rm (i)} and {\rm (ii)} hold:
\\
\noindent
{\rm (i)}~ The family $\{ \nu^{\ve}_{u,a}\}_{0<\ve \le 1}$ 
of finite measures
is exponentially tight and  satisfies an LDP 
on $G\Omega^B_{\al, 4m} ({\mathbb R}^d)$ as $\ve \searrow 0$ 
with the speed $\ve^2$ and
a good rate function $I_1$, that is, 
for every Borel set $A \subset G\Omega^B_{\al, 4m} ({\mathbb R}^d)$,
the following inequalities hold;
\begin{align}
- \inf_{{\bf w} \in A^{\circ} } I_1 ({\bf w})  
\le
 \liminf_{\ve \searrow 0 } \ve^2 \log \nu^{\ve}_{u,a} (A^{\circ})
\le
 \limsup_{\ve \searrow 0 } \ve^2 \log \nu^{\ve}_{u,a} (\bar{A})
\le 
- \inf_{{\bf w} \in \bar{A} } I_1  ({\bf w}).
\nn
\end{align}
\noindent
{\rm (ii)}~ 
The family $\{ \hat\nu^{\ve}_{u,a}\}_{0 < \ve\le 1}$ 
of probability measures 
is exponentially tight and satisfies an LDP 
on $G\Omega^B_{\al, 4m} ({\mathbb R}^d)$ as $\ve \searrow 0$ 
with the speed $\ve^2$ and a good rate function $\hat{I}_1$.
\end{theorem}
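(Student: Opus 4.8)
The plan is to deduce (ii) from (i), and to prove (i) in three steps: exponential tightness, the large-deviations upper bound over closed sets, and the lower bound over open sets (the last two are the substantial work, carried out in Sections \ref{sec.upper} and \ref{sec.lower}). For (ii), I would first record that $\cQ^{u,a}\neq\emptyset$ and that $I_1$ is good, whence $\inf I_1 = \tfrac12\min\{\|h\|_{\cH}^2 : h\in\cQ^{u,a}\}$ is finite, attained, and — by Proposition \ref{pr.iso_path} and \eqref{eq.ED^2} — equal to $\tfrac12 d_{SR}(x,a)^2$. Applying (i) with $A = G\Omega^B_{\al,4m}(\R^d)$ (which is both open and closed) gives $\ve^2\log\nu^{\ve}_{u,a}(G\Omega^B_{\al,4m}(\R^d)) = \ve^2\log p_1^{\ve}(x,a)\to -\tfrac12 d_{SR}(x,a)^2$; since $\hat\nu^{\ve}_{u,a}$ is $\nu^{\ve}_{u,a}$ divided by this total mass, subtracting the two logarithmic asymptotics yields the LDP for $\{\hat\nu^{\ve}_{u,a}\}$ with the good rate function $\hat I_1 = I_1 - \tfrac12 d_{SR}(x,a)^2$, and exponential tightness of $\{\hat\nu^{\ve}_{u,a}\}$ follows from that of $\{\nu^{\ve}_{u,a}\}$ together with the lower bound on the total mass.

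\textbf{Exponential tightness.} For $R>0$ I would use the quasi-sure Schilder-type estimate for the Brownian rough path lift (cf. \cite{in1, aida}) to pick a compact $K_R\subset G\Omega^B_{\al,4m}(\R^d)$ with $\limsup_{\ve}\ve^2\log\mathrm{Cap}_{q,k}(\{w\in\cW : \ve\cL(w)\notin K_R\})\le -R$. The capacity bound in item {\bf (e)} applied to the positive distribution $\delta_a(X^{\ve}_1)$ gives $\nu^{\ve}_{u,a}(K_R^c)\le \|\delta_a(X^{\ve}_1)\|_{p,-k}\,\mathrm{Cap}_{q,k}(\{\ve\cL(w)\notin K_R\})$, and since $\|\delta_a(X^{\ve}_1)\|_{p,-k}$ grows at most polynomially in $\ve^{-1}$ — by the $\ve$-boundedness of the Sobolev norms of $X^{\ve}_1$ and the Kusuoka--Stroock estimate \eqref{eq.KS_est}, so that $\ve^2\log\|\delta_a(X^{\ve}_1)\|_{p,-k}\to 0$ — this forces $\limsup_{\ve}\ve^2\log\nu^{\ve}_{u,a}(K_R^c)\le -R$. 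In particular the LDP upper bound need only be checked on compact sets.

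\textbf{Lower bound.} Let $A$ be open and ${\bf w}_0 = \cL(h_0)\in A$ with $h_0\in\cQ^{u,a}$; it suffices to prove $\liminf_{\ve}\ve^2\log\nu^{\ve}_{u,a}(A)\ge -\tfrac12\|h_0\|_{\cH}^2$ and then take the supremum over such ${\bf w}_0$. I would fix a ball $B\subset A$ about ${\bf w}_0$, perform the Cameron--Martin shift $w\mapsto w + h_0/\ve$ — producing the Girsanov density $\exp(-\ve^{-1}\int_0^1\la h_0'(t), dw_t\ra - \tfrac12\ve^{-2}\|h_0\|_{\cH}^2)$ and, via $\ve\cL(w+h_0/\ve) = \tau_{h_0}(\ve\cL(w))$, rewriting $\nu^{\ve}_{u,a}(B)$ as a generalized expectation of $\delta_a(X^{\ve,h_0}_1)$ against the indicator of $\{\tau_{h_0}(\ve\cL(w))\in B\}$ times that density. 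Because $h_0\in\cQ^{u,a}$, the shifted solution on $\cP$ converges to $\phi(h_0)$, so $X^{\ve,h_0}_1\to\psi(h_0)_1 = a$; in a chart around $a$ the chart coordinate of $X^{\ve,h_0}_1$ expands as $\ve\Xi^{h_0}_1 + O(\ve^2)$ in ${\bf D}_\infty$, and by the scaling of the delta function and the localized Watanabe asymptotics of Proposition \ref{pr.comp2} the paired contribution of $\delta_a(X^{\ve,h_0}_1)$ against a suitable smooth cutoff is of order $\ve^{-n}$ times a strictly positive constant (the value at the origin of the density of the Gaussian $\Xi^{h_0}_1$, possibly after replacing $h_0$ by a nearby element of $\cQ^{u,a}$ at which the relevant deterministic Malliavin covariance is nondegenerate, via a density argument in the spirit of Lemma \ref{lem.dMC_dense}). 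Restricting to $\{\int_0^1\la h_0', dw\ra\le\delta\}\cap\{\|\ve\cL(w)\| \le\rho\}$, which has probability bounded below uniformly in $\ve$ and on which the indicator of $B$ is $1$, the density is $\ge\exp(-\delta\ve^{-1} - \tfrac12\ve^{-2}\|h_0\|_{\cH}^2)$; sending $\ve\searrow 0$ and then $\delta\searrow 0$ completes the lower bound.

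\textbf{Upper bound and the main obstacle.} By exponential tightness I may assume $A$ compact, cover it by finitely many small balls, and estimate each. The capacity bound of item {\bf (e)} together with the quasi-sure Schilder estimate bounds each $\nu^{\ve}_{u,a}(B({\bf w}_j,r_j))$ by $\mathrm{poly}(\ve^{-1})\exp(-(\inf_{\bar B_j}\Lambda)/\ve^2)$, where $\Lambda({\bf w}) = \tfrac12\|h\|_{\cH}^2$ if ${\bf w} = \cL(h)$ with $h\in\cH$ and $+\infty$ otherwise; this already suffices on balls whose nearby Schilder cost is at least $\inf_A I_1$. The difficulty — and what I expect to be the main obstacle — is the balls centred at a lift $\cL(h_j)$ with $h_j\notin\cQ^{u,a}$, where $\Lambda < I_1 = \infty$, so the capacity bound undershoots and one must bring in the singular weight $\delta_a(X^{\ve}_1)$ to recover the endpoint constraint. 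The plan there is to Cameron--Martin–shift by $h_j$, observe $X^{\ve,h_j}_1\to\psi(h_j)_1\neq a$, insert a smooth cutoff supported near $a$ (which does not alter $\delta_a(X^{\ve,h_j}_1)$), and use the localized Watanabe calculus of Propositions \ref{pr.comp1}--\ref{pr.comp2} together with Freidlin--Wentzell estimates for the shifted SDE on $\cP$ to show that, on a small enough ball, this paired contribution decays at a rate strictly larger than $\Lambda({\bf w}_j)$ — enough to combine the finitely many balls into $\limsup_{\ve}\ve^2\log\nu^{\ve}_{u,a}(A)\le -\inf_A I_1$. Making this vanishing of $\delta_a(X^{\ve,h_j}_1)$ quantitative and uniform on a neighbourhood is the delicate point, and it is precisely here that one must work with the quasi-sure topology on the rough path space and with the manifold-valued Malliavin calculus of \cite{ta} on the frame bundle $\cP$.
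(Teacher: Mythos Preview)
Your overall architecture matches the paper: (ii) follows from (i) applied to the whole space, exponential tightness comes from the capacity inequality in item {\bf (e)} combined with the polynomial bound $\|\delta_a(X^{\ve}_1)\|_{2,-2n}=O(\ve^{-\nu})$ (Proposition \ref{pr.div_order}), and the lower and upper bounds are obtained locally on small balls. Two points, however, need correction.

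\textbf{Lower bound: the Girsanov factor.} Your plan to ``restrict to $\{\int_0^1\langle h_0',dw\rangle\le\delta\}$, which has probability bounded below uniformly in $\ve$'' does not close. After the Cameron--Martin shift the quantity you are bounding from below is an integral against the Sugita measure of the positive Watanabe distribution $\delta_a(X^{\ve,h_0}_1)$, \emph{not} against Wiener measure; the fact that your event has Wiener probability bounded below says nothing about its mass under this singular measure, and the indicator is not in $\mathbf{D}_\infty$ so the localized asymptotics of Proposition \ref{pr.comp2} do not apply to it. The paper's remedy (Lemma \ref{pr.CMnice}, Proposition \ref{pr.low_ball}) is to first approximate $h_0\in\cQ^{u,a}$ by elements $h\in\cQ^{u,a}$ satisfying the extra condition $\langle h,\cdot\rangle_{\cH}\in\cW^*$. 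For such $h$ the pairing $\langle h,w\rangle$ is a genuine continuous linear functional, dominated by the sup norm of $w$ and hence by the Besov norm of $(\ve{\bf W})^1$; on the support of the smooth rough-path cutoff one therefore has \emph{deterministically} $|\langle h,w\rangle|\le cR/\ve$, so $e^{-\langle h,w\rangle/\ve}\ge e^{-cR/\ve^2}$ can be pulled out, leaving $\ve^{-n}\mathbb{E}[\chi(\xi_\ve)\delta_0(F_\ve)]$, whose limit is strictly positive by Proposition \ref{pr.comp2}.

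\textbf{Upper bound: the dichotomy is inverted.} The balls centred at $\cL(h_j)$ with $h_j\notin\cQ^{u,a}$ are the \emph{easy} case, and no Cameron--Martin shift is needed: by Lyons' continuity, for $R$ and $\ve$ small enough every $\ve{\bf W}\in\bar B_R({\bf w})$ has $\Psi(\ve{\bf W},\lambda^\ve)_1$ in a fixed neighbourhood of $\psi(h_j)_1\neq a$, so approximating $\delta_a$ by smooth functions supported near $a$ gives $\nu^{\ve}_{u,a}(\bar B_R({\bf w}))=0$ outright (first case of Proposition \ref{pr.limsup_ball}). The nontrivial case is $\Psi({\bf w},0)_1=a$. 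There your proposed ``capacity bound plus quasi-sure Schilder'' is not available in that form --- no Schilder-type LDP for $(q,k)$-capacities of arbitrary balls is established. What the paper does instead is pair $\delta_a(X^\ve_1)$ against the smooth cutoff $\chi(\Xi_\ve/R^{4m})\in\mathbf{D}_\infty$ and apply the localized integration-by-parts formula of Lemma \ref{lem.0624} $2n$ times to obtain $\nu^{\ve}_{u,a}(\bar B_R({\bf w}))\le c\,\ve^{-\nu}\|\chi(\Xi_\ve/R^{4m})\|_{p,2n}$; this Sobolev norm is then shown (Lemma \ref{lem.0630}, via Meyer's equivalence and the \emph{ordinary} Schilder LDP for the law of $\ve{\bf W}$) to decay like $\exp(-I({\bf w})/(p\ve^2))$, and letting $p\searrow 1$, $R\searrow 0$ gives the local rate $-I({\bf w})$, which equals $-I_1({\bf w})$ in this case.
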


Since the whole set is both open and closed,
Theorem \ref{tm.ldp.dlt} {\rm (i)}  implies that 
\begin{align*}
\lim_{t \searrow 0} t  \log p^{1}_{ t} (x, a)
&= \lim_{\ve \searrow 0 } \ve^2 \log {\mathbb E}[ \delta_a (X^{\ve}_1)] 
\\
&=
\lim_{\ve \searrow 0 } \ve^2 \log \theta^{\ve}_{u,a} (\cW ) 
\\
&=
\lim_{\ve \searrow 0 } \ve^2 \log \nu^{\ve}_{u,a} ( G\Omega^B_{\al, 4m} ({\mathbb R}^d) ) 
\\
&= 
- \min\{ \|h\|^2_{{\cal H}}/2  \mid h \in {\cal Q}^{x,a}\}
=
- d_{SR} (x,a)^2/2.
\end{align*}
We have also used \eqref{eq.Wat_rep} above.
Due to this Varadhan-type asymptotic formula, 
Theorem \ref{tm.ldp.dlt} {\rm (ii)} is immediate from {\rm (i)}.
We will prove Theorem \ref{tm.ldp.dlt} {\rm (i)} in 
Sections \ref{sec.lower} and \ref{sec.upper}.

\begin{proof}[Proof of Theorem \ref{thm.mainQ}.]
Now we consider $G\Omega^H_{\alpha} ( {\mathbb R}^d)$
with $\alpha \in (1/3, 1/2)$.
Due to the Besov-H\"older embedding, $\hat\nu^{\ve}_{u,a}$
actually sits on this space
and Theorem \ref{tm.ldp.dlt} {\rm (ii)} still holds 
even if $G\Omega^B_{\al, 4m} ({\mathbb R}^d)$ is replaced 
by this space.
Obviuosly, the family $\{\delta_{\lambda^\ve} \}_{0<\ve \le 1}$
is exponentially tight and 
satisfies an LDP on $C_0^{1-H}([0,1], {\mathbb R})$ with
the good rate function $+\infty \cdot {\bf 1}_{\{ 0\}^c}$
with the convention that $\infty \cdot 0 =0$.
By a general result for LDPs for product measures
(see \cite[Exercise 4.2.7]{dzbk}), 
in which the exponential tightness plays a key role,
$\{\hat\nu^{\ve}_{u,a} \otimes\delta_{\lambda^\ve} \}_{0<\ve \le 1}$
satisfies an LDP on $G\Omega^H_{\alpha} ( {\mathbb R}^d) \times C_0^{1-H}([0,1], {\mathbb R})$
with the good rate function which is
defined for $({\bf w}, \lambda)$ as follows: 
\[
\hat{I}_1 ({\bf w})
+\infty \cdot {\bf 1}_{\{0\}^c} (\lambda)
=
\begin{cases}
  \tfrac12  \|h\|^2_{\cH} & (\mbox{if ${\bf w}= {\cal L}(h)$ for some $h \in {\cal Q}^{u,a}$ and $\lambda =0$}), \\
    \infty &  (\mbox{otherwise}).
  \end{cases}
  \]

Choose an embedding as in Remark \ref{rem.embed}
and consider RDE \eqref{rde_x.def} with $V_i = A_i$, 
$x=u$ and $n =M$. 
Define a continuous map 
$\Psi\colon G\Omega^H_{\alpha} ( {\mathbb R}^d) \times C_0^{1-H}([0,1], {\mathbb R}) 
\to C_{x} ([0,1], \cM)$ by 
$\Psi ({\bf w}, \lambda)_t = \pi ( x + \Phi({\bf w}, \lambda)^1_{0,t} )$.
Then, 
$\Psi(\ve {\bf W}, \lambda^\ve)$ is 
an $\infty$-quasi-continuous modification of $X^{\ve} = \pi (U^{\ve})$,
where $U^{\ve}$ solves SDE \eqref{eq.sde_on_P}.
In what follows we use this version of $X^{\ve}$.
Note that $\psi (h) = \Psi (\cL (h), 0)$ for $h \in \cH$.

Now we claim that the law of $\Psi$ under 
$\hat\nu^{\ve}_{u,a} \otimes \delta_{\lambda^\ve}$
is the pinned
$\ve^2 (\Delta_{\mathrm{sub}}/2 +V)$-diffusion measure $\Q^{\ve}_{x,a}$
from $x=\pi (u)$ to $a$.
Let $k \ge 1$, $G \in C^\infty (\cM^k)$ 
and $0 =t_0 <t_1 <\cdots < t_k <t_{k+1}=1$ be arbitrary.
Then, we have
\begin{align*}
\lefteqn{
\int  G (\Psi ({\bf w}, \lambda)_{t_1}, \ldots, \Psi ({\bf w}, \lambda)_{t_k}) 
\hat\nu^{\ve}_{u,a} \otimes \delta_{\lambda^\ve}
(d{\bf w} d\lambda)
}
\\
&= 
\int  G (\Psi (\ve {\bf W}, \lambda^\ve)_{t_1}, \ldots, \Psi (\ve {\bf W}, \lambda^\ve)_{t_k}) \hat\nu^{\ve}_{u,a} (dw)
\\
&= 
p_1^\ve (x,a)^{-1}
{\mathbb E} [G (X^\ve_{t_1}, \ldots, X^\ve_{t_k})  \delta_a (X^{\ve}_1)]
\\
&= 
p_1^\ve (x,a)^{-1}
\int_{\cM^k} G (x_1, \ldots, x_k) \prod_{i=0}^k p^\ve_{t_{i+1} -t_i} (x_i, x_{i+1})
\prod_{i=1}^k {\rm vol} (dx_i)
\end{align*}
as desired. 
Here, we set $x_0 =x$ and $x_{k+1} =a$ for simplicity.
This proves our claim.
Note that this argument also proves the existence of 
the pinned diffusion measure $\Q^{\ve}_{x,a}$.

By the above fact and 
Lyons' continuity theorem, we can use the contraction principle
(\cite[Theorem 4.2.1]{dzbk}) to prove that $\{\Q^{\ve}_{x,a} \}_{0 <\ve \le 1}$ satisfies an LDP with a good rate function.
Since $\psi$ is a bijection that preserves the energy 
(Proposition \ref{pr.iso_path}), 
the rate function $J$ is given by \eqref{eq.0720-2}.
This completes the proof of Theorem \ref{thm.mainQ}.
\end{proof}

%%%%%%%%%%%%%%%%%%%%%%%%%%%\newpage
%%%%%%%%%%%%%%%%%%%%%%%%%%
\section{Lower estimate}\label{sec.lower}

In this section we prove the lower estimate of LDP in 
Theorem \ref{tm.ldp.dlt} {\rm (i)}.
Take any $u \in \cP$ and $a \in \cM$. 
For this $u$,  $\phi (h)$ denotes the unique solution of
ODE \eqref{eq.odeFB2} and $\psi (h)$ is its projection on $\cM$.
The subset ${\cal Q}^{u, a} \subset \cH$ is defined in \eqref{def.Qua}.

\begin{lemma}\label{pr.CMnice}
For every $h \in {\cal Q}^{u, a}$
there exists a 
sequence $\{ h_j\}_{j=1}^\infty$ in ${\cal Q}^{u, a}$
which satisfies the following conditions:
\begin{itemize}
\item[{\rm (i)}] $\lim_{j\to\infty} \|h_j -h\|_{\cH} =0$.
\item[{\rm (ii)}] 
For all $j$, $D\psi (h_j)_1\colon \cH \to T_a\cM$ is surjective, where 
$D\psi (h_j)_1$ denotes the tangent map of 
$\cH \ni k \mapsto \psi (k)_1 \in \cM$ at $h_j$.
\item[{\rm (iii)}]
For all $j$,  $\langle h_j, \bullet\rangle_{\cH}\in \cW^*$, that is, 
this linear functional on $\cH$ extends 
to a bounded linear functional on $\cW$.
\end{itemize}
\end{lemma}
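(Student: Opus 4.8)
The plan is to establish the three properties separately, obtaining a single sequence at the end by a diagonal argument. The most efficient route is to handle (iii) first because finite-energy paths whose Cameron--Martin pairing extends to $\cW^*$ are easy to produce, namely piecewise-linear (or more generally $C^1$) paths: if $h$ is such that $h'$ has bounded variation then $\langle h,\bullet\rangle_{\cH}=\int_0^1 h'_t\,d(\bullet)_t$ can be integrated by parts and becomes a continuous functional on $\cW$. So I would first show that one can approximate a given $h\in\cH$ in $\cH$-norm by a sequence in ${\cal Q}^{u,a}$ consisting of such ``nice'' paths, while keeping the endpoint constraint $\psi(\cdot)_1=a$ intact. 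Dropping the constraint, density of piecewise-linear paths in $\cH$ is standard; to restore the constraint one uses that $\psi\colon\cH\to\cM$ is a smooth (indeed, locally Lipschitz via Lyons' continuity theorem applied to the ODE \eqref{eq.odeFB2}) submersion near suitable points. This is where property (ii) enters and should really be proved in tandem with (i) and (iii).

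For (ii), the key input is Lemma \ref{lem.partHor}: the canonical horizontal vector fields $A_1,\dots,A_d$ satisfy the partial H\"ormander condition, hence by the deterministic analogue of the Kusuoka--Stroock / bracket-generating reasoning (this is exactly the statement, via Chow--Rashevsky together with control-theoretic arguments as in \cite{kun}, that the endpoint map of \eqref{eq.odeFB2} is a submersion at ``most'' controls), the set
\[
{\cal R}:=\{ h\in\cH \mid D\psi(h)_1\colon\cH\to T_a\cM \text{ is surjective}\}
\]
is a dense open subset of ${\cal Q}^{u,a}$ --- or at least, its closure contains ${\cal Q}^{u,a}$. Concretely: given $h\in{\cal Q}^{u,a}$, perturb it inside ${\cal Q}^{u,a}$ to make the tangent map surjective. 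One clean way is to first perturb $h$ freely in $\cH$ (not respecting the constraint) to land at some $\tilde h$ with $D\psi(\tilde h)_1$ surjective — such $\tilde h$ are dense because the non-surjectivity locus is the zero set of a nontrivial analytic-type function (the determinant of the deterministic Malliavin covariance $\sigma_{\psi_1}$), and one invokes Lemma \ref{lem.dMC_dense} or its proof, which shows such controls are dense once $\det\sigma_{X^1_1}>0$ a.s. (this last fact holds here by the Kusuoka--Stroock estimate \eqref{eq.KS_est} / non-degeneracy of $X^\ve_1$). Then, because $D\psi(\tilde h)_1$ is surjective, the implicit function theorem lets me slide $\tilde h$ back onto the level set $\{\psi(\cdot)_1=a\}$ by an arbitrarily small correction, producing a genuine element of ${\cal Q}^{u,a}$ at which surjectivity still holds (surjectivity is an open condition). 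Iterating with shrinking perturbations gives a sequence in ${\cal Q}^{u,a}\cap{\cal R}$ converging to $h$, settling (i) and (ii).

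Finally, to also get (iii), I would refine the above: at each stage choose the free perturbation $\tilde h$ to moreover have $\langle\tilde h,\bullet\rangle_{\cH}\in\cW^*$ (possible since such $\tilde h$ are dense in $\cH$ and the surjectivity locus is open, so a small such perturbation still works), and then use that the implicit-function correction can be taken to land in the same class — here one uses that the correction lives in a fixed finite-dimensional subspace spanned by $D\psi(\tilde h)_1$-preimages of a basis of $T_a\cM$, and those preimage directions can be chosen to be nice paths (e.g. of the form $t\mapsto\int_0^t(\mathrm{Id}+{\bf K})_s{\bf V}(\phi(\tilde h)_s)\,c\,ds$ with $c$ constant, which is $C^1$), so their $\cH$-pairings extend to $\cW$ as well; a finite sum of such still lies in $\cW^*$. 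Then a diagonal selection across the three approximation parameters yields the desired $\{h_j\}$. The main obstacle I anticipate is making the ``slide back onto the level set while preserving niceness'' step fully rigorous, i.e. controlling the implicit-function correction both quantitatively (to keep the $\cH$-distance small) and structurally (to keep it inside the class of controls with $\cW^*$-pairing); this requires a careful finite-dimensional reduction using the surjectivity from (ii) rather than a soft application of the implicit function theorem. Everything else is a routine combination of density of smooth paths in $\cH$, Lyons continuity for the ODE, and the openness of the non-degeneracy condition.
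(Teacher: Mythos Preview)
Your overall route---free perturbation to get surjectivity, then implicit-function correction back onto the level set $\{\psi(\cdot)_1=a\}$, with the correction confined to a finite-dimensional subspace of $C^1$ directions so that the $\cW^*$-property survives---is a legitimate alternative to the paper's proof, and the treatment of~(iii) is essentially the same as the paper's (which packages that step as the cited lemma \cite[Lemma~7.3]{in1}). There is, however, a real gap in your argument for~(ii): the claim that $\{h\in\cH:\det\sigma_{\psi_1}(h)>0\}$ is $\cH$-dense is not justified by the tools you invoke. The ``analytic-type'' argument fails because the vector fields $A_i$ are only $C^\infty$, so $h\mapsto\det\sigma_{\psi_1}(h)$ need not be real-analytic on~$\cH$, and for merely smooth functions nontriviality does not force dense positivity. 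Lemma~\ref{lem.dMC_dense} (or its proof) gives density of such controls only in the \emph{rough path} topology, which is strictly weaker than the $\cH$-topology on~$\cH$; you cannot extract $\cH$-approximants from it. The density you need is true, but it requires either an appeal to a nontrivial control-theoretic genericity result for bracket-generating systems, or a direct construction.

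The paper takes the constructive route, which sidesteps the density issue entirely and is worth knowing. Exploiting that ODE~\eqref{eq.odeFB2} has \emph{no drift}, one prepends to $h$ a short ``excursion'': on $[0,1/j]$ run a control $l_j$ (the anti-development of a trajectory produced near $x$ using a local orthonormal frame) chosen so that the endpoint map at time $1/j$ is already a submersion; on $[1/j,2/j]$ run $l_j$ backwards, so $\phi$ returns to~$u$; on $[2/j,1]$ run a time-rescaled copy of~$h$. Because the ODE is drift-free, reversal and time-rescaling behave trivially, so $\phi(h_j)_1=\phi(h)_1$ exactly---hence $h_j\in\cQ^{u,a}$ with no correction needed---and $\|h_j-h\|_\cH\to0$ since the excursion has energy~$O(1/j)$. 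Surjectivity at time~$1/j$ propagates to time~$1$ because the intermediate flow is a local diffeomorphism. Only afterwards is (iii) obtained by a small IFT-type perturbation (your mechanism). The advantage of this approach over yours is that it keeps the endpoint fixed throughout, so (i)--(ii) require no IFT at all; the advantage of yours is conceptual cleanness once the density is in hand.
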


\begin{proof}
First we will find $\{ h_j\}_{j=1}^\infty$ which satisfies (i) and (ii) only.
Let $\{Z_1, \ldots, Z_d\}$ be an orthonormal frame of $\cD$
on a coordinate neighborhood $U$ of $x =\pi (u)$.
We view $U$ as an open subset of $\R^n$ and extend
$Z_i$, $1\le i \le d$, as a smooth vector field on $\R^n$
with compact support.
We denote by $\cH_\tau$, $\tau \in (0,1)$,
the Hilbert space of $\R^d$-valued Cameron-Martin paths 
defined on the time interval $[0, \tau]$.

Consider the following ODE on $\R^n$
driven by a Cameron-Martin path $k$:
\[
d x(k)_t 
= \sum_{i=1}^n Z_i (x(k)_t ) d k_t,
\qquad x(k)_0 =x.
\]
The following fact was proved in \cite[Section 3]{in2}. 
For every $j$ large enough, there exists 
a Cameron-Martin path $k_j\in \cH_{1/j}$ 
such that $|k^{\prime}_{j,t} | \le 1$ for almost all $t \in [0, 1/j]$ and
$D x(k_j)_{1/j}$ is surjective.
(Precisely,  
$\{Z_i\}$ is assumed in to satisfy
 the bracket generating condition 
at every point of $\R^n$ in \cite{in2}, while the condition 
is assumed  only on $U$ here.
However, this difference does not matter at all 
since when $j$ is large enough, the Cameron-Martin norm of 
$x(k_j)$ is small enough and therefore $x(k_j)$ stays inside $U$, anyway.)

Then, since $\{Z_i\}$ are orthonormal,
$|x (k_j)^{\prime}_t | \le 1$ for almost all $t \in [0, 1/j]$.
Denote by $l_j:=\psi^{-1} (x (k_j)) \in \cH_{1/j}$ 
the anti-development of $x (k_j)$,
then $|l^{\prime}_{j,t} | \le 1$ for almost all $t \in [0, 1/j]$, too.
Define $h_j \in \cH$ by 
\[
h_{j,t} :=
\begin{cases}
l_{j, t} & \mbox{on $t \in [0, 1/j]$}, \\
 l_{j, (2/j) -t}   & \mbox{on $t \in [1/j, 2/j]$}, \\
    h_{\tau}  \qquad \mbox{with $\tau = \frac{t - (2/j)}{1- (2/j)}$} &  \mbox{on $t \in [2/j, 1]$}.
  \end{cases}
  \]
Since ODE \eqref{eq.odeFB2} has no drift term, 
we can easily see that $\phi (h_j)_{2/j} =u$ and $\phi (h_j)_1 =\phi (h)_1$. In particular, $h_j \in {\cal Q}^{u,a}$.
It is a routine to check {\rm (i)} (see \cite{in2} for a proof for instance).

Next we show  {\rm (ii)}. Fix $j$ and
consider the admissible path $\psi (h_j)\colon [0,1] \to \cM$.
We can find a partition of $0=s_0 < s_1 <\cdots <s_N =1$
of $[0,1]$ such that, for all $1\le r\le N$,
 $\psi (h_j)\restriction_{[s_{r-1}, s_r]}$
is contained in a  coordinate neighborhood $U_r$ on which
an orthonormal frame $\{Z^{(r)}_1, \ldots, Z^{(r)}_d\}$ 
of $\cD$ can be chosen.
We may assume $s_1 =1/j$,
$U_1 =U$ and $Z^{(1)}_i =Z_i$ for all $1\le i \le d$.

Obviously,
there exists a unique Cameron-Martin path 
$k^{(r)}\colon [s_{r-1}, s_r] \to \R^d$ such that
 $\psi (h_j)\restriction_{[s_{r-1}, s_r]}$ satisfies the following 
 ODE with the initial condition $x^{(r)}_{s_{r-1}}=\psi (h_j)_{s_{r-1}}$:
\begin{equation}\label{eq.ode_End}
d x^{(r)}_t
= \sum_{i=1}^n Z^{(r)}_i (x^{(r)}_t ) d k^{(r)}_t
\qquad  \mbox{on $[s_{r-1}, s_r]$}.
\end{equation}
When the initial value of ODE \eqref{eq.ode_End}
is replaced by $\xi^{(r-1)}$,
we write ${\rm End}^{(r)} (\xi^{(r-1)} ):= x^{(r)}_{s_r}$.
When $\xi^{(r-1)}$ is close enough to $\psi (h_j)_{s_{r-1}}$, 
${\rm End}^{(r)} (\xi^{(r-1)} )$ is well-defined.
By the theory of flow of diffeomorphisms for ODEs, 
${\rm End}^{(r)}$ is a local diffeomorphism 
from a neighborhood of $\psi (h_j)_{s_{r-1}}$ to  
a neighborhood of $\psi (h_j)_{s_r}$.
(Note that here and in what follows, $k^{(r)}$ is fixed.)
Hence, 
${\rm End} := {\rm End}^{(N)} \circ \cdots \circ {\rm End}^{(2)}$
is a local diffeomorphism 
from a neighborhood of $\psi (h_j)_{s_1}$ to  
a neighborhood of $\psi (h_j)_{1}=a$.
In particular, the tangent map of ${\rm End}$ is
bijective from $T_b\cM$ to $T_a \cM$,
where we write $b:=x(k_j)_{s_1} = \psi (h_j)_{s_1}$.

Now consider $\psi (h_j)\restriction_{[0, s_1]} =x (k_j)$.
Since $Dx(k_j)_{s_1}$ is surjective, for every 
$v\in T_b\cM$
there exist sufficiently small $\ve_0 >0$ and 
a $C^1$-curve $(-\ve_0,\ve_0)\ni \ve \mapsto k_j^{\ve}
\in \cH_{1/j}$ such that
$k_j^{0} =k_j$ and $(d/d\ve)|_{\ve =0} x(k^{\ve}_j)_{s_1} =v$.
(Below $\ve_0$ may change from line to line.)

Take $u \in T_a \cM$ arbitrarily and let $v$ 
be the unique element of $T_b\cM$ which corresponds to $u$
through the bijection.
For this $v$, we take $k_j^{\ve}$ as above.
Define $h_j^{\ve}$ to be the unique element in $\cH$ such that
(1) $\psi (h_j^{\ve})$ coincides with $x (k^{\ve}_j)$ on $[0, s_1]$
and (2)  $\psi (h^{\ve}_j)\restriction_{[s_{r-1}, s_r]}$
solves ODE \eqref{eq.ode_End} for all $2 \le r \le N$.
By way of construction, $h_j^{0}=h_j$
and $(d/d\ve)|_{\ve =0}  \psi (h^{\ve}_j)_1 =u$.
Thus, we have shown {\rm (ii)}.

Finally, we cope with {\rm (iii)}.
Since $h \mapsto \psi (h)_1$ is Fr\'echet-$C^1$
and the inclusion $\cW^* \hookrightarrow \cH^* \cong \cH$
is continuous and dense, we may use 
a topological lemma \cite[Lemma 7.3]{in1}.
It implies that we can find 
$\hat{h}_j\in {\cal Q}^{u, a}$ wich satisfies the following condition
for sufficiently large $j$:
(1) $\| h_j- \hat{h}_j \|_{\cH} \le 1/j$,  
(2) $D\psi (\hat{h}_j)_1\colon \cH \to T_a\cM$ is surjective,
and (3) $\langle \hat{h}_j, \bullet\rangle_{\cH}\in \cW^*$.
Hence, $\{ \hat{h}_j\}_{j=1}^\infty$ is the desired sequence.
(Precisely, \cite[Lemma 7.3]{in1} is for 
$\R^n$-valued Fr\'echet-$C^1$ maps. However, it still holds 
for our manifold-valued case without modification
since information outside
a sufficiently small neighborhood of $a$ is not used in its proof.)
\end{proof}

For $R>0$, we set 
\begin{equation}\label{def.hatball}
\hat{B}_{R}=\{ {\bf w} \in G\Omega^B_{  \al, 4m} ({\mathbb R}^{d}) 
\mid
\|{\bf w}^1 \|_{ \al, 4m -B}^{4m}
 +  \|{\bf w}^2 \|_{2 \al, 2m -B}^{2m} < R^{4m}
 \}
\end{equation}
and set
$\hat{B}_{R} ({\bf h})= \tau_h ( \hat{B}_{R} )$, 
where $\tau_h$ is the Young translation by $h\in \cH$ on $G\Omega^B_{\al, 4m} ({\mathbb R}^{d})$.
Since $\tau_h$ is a homeomorphism from $G\Omega^B_{\al, 4m} ({\mathbb R}^{d})$ to itself,
$\{ \hat{B}_{R} ({\bf h}) \mid R>0\}$ forms a fundamental system of open neighborhoods around ${\bf h}=\cL (h)$.

\begin{proposition}\label{pr.low_ball}
Assume that $h \in {\cal Q}^{u, a}$ satisfies that
$D\psi (h)_1\colon \cH \to T_a\cM$ is surjective
and that $\langle h, \bullet\rangle_{\cH}\in \cW^*$.
Then, there exists a constant $c =c(h)>0$ 
independent of $R$ such that 
\begin{equation}\label{eq.0604}
\liminf_{\ve \searrow 0} \ve^2
\log \nu^{\ve}_{u, a}( \hat{B}_R ({\bf h}) )
\ge  
- \frac12 \|h\|_{\cH}^2 -cR
\end{equation}
holds for every sufficiently small $R>0$.
\end{proposition}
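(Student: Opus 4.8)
\emph{Proof sketch.} The plan is to run the Cameron--Martin (Girsanov) shift for positive Watanabe distributions, which turns the neighbourhood $\hat{B}_R(\mathbf{h})=\tau_h(\hat{B}_R)$ of $\mathbf{h}$ into the neighbourhood $\hat{B}_R$ of the zero rough path, at the cost of the explicit prefactor $e^{-\|h\|_{\cH}^2/(2\ve^2)}$ and a Girsanov exponential, and then to show that the remaining generalized expectation does not decay at the exponential scale $\ve^2$, up to a loss of order $R$ coming from that exponential. Concretely, since $\langle h,\bullet\rangle_{\cH}\in\cW^*$, the Cameron--Martin theorem for Watanabe distributions (the distributional form of Girsanov's formula; cf.\ \cite{in1}) gives, writing $X^{\ve,h}_1$ for the projection onto $\cM$ of the solution of \eqref{eq.sde_on_P} driven by $w+h/\ve$ and using $\tau_h(\ve\mathbf{W})=\ve\cL(w+h/\ve)$,
\[
\nu^{\ve}_{u,a}\bigl(\hat{B}_R(\mathbf{h})\bigr)
=\bigl\langle \delta_a(X^{\ve}_1),\ \mathbf{1}_{\hat{B}_R(\mathbf{h})}(\ve\mathbf{W})\bigr\rangle
=e^{-\|h\|_{\cH}^2/(2\ve^2)}
\bigl\langle \delta_a(X^{\ve,h}_1),\ e^{-\frac1\ve\int_0^1 h'_s\,dw_s}\,\mathbf{1}_{\hat{B}_R}(\ve\mathbf{W})\bigr\rangle,
\]
all pairings being understood in Sugita's sense (a positive Watanabe distribution against the indicator of a quasi-open set).

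Next I would estimate the Girsanov exponential on the event. On $\{\ve\mathbf{W}\in\hat{B}_R\}$ the Besov--H\"older embedding bounds $\|\ve w\|_{\infty}$ by a constant multiple of the Besov norm of $(\ve\mathbf{W})^1$, hence by $C'R$; combined with $\langle h,\bullet\rangle_{\cH}\in\cW^*$ this yields $\bigl|\tfrac1\ve\int_0^1 h'_s\,dw_s\bigr|=\tfrac1{\ve^2}\bigl|\int_0^1 h'_s\,d(\ve w)_s\bigr|\le cR/\ve^2$ there, with $c=c(h):=C'\|\langle h,\bullet\rangle_{\cH}\|_{\cW^*}$ independent of $R$ and $\ve$. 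Since $\delta_a(X^{\ve,h}_1)$ is a positive Watanabe distribution, monotonicity of the pairing gives
\[
\nu^{\ve}_{u,a}\bigl(\hat{B}_R(\mathbf{h})\bigr)
\ \ge\ e^{-\|h\|_{\cH}^2/(2\ve^2)}\,e^{-cR/\ve^2}\,
\bigl\langle \delta_a(X^{\ve,h}_1),\ \mathbf{1}_{\hat{B}_R}(\ve\mathbf{W})\bigr\rangle,
\]
so it remains to prove $\liminf_{\ve\searrow0}\ve^2\log\bigl\langle \delta_a(X^{\ve,h}_1),\mathbf{1}_{\hat{B}_R}(\ve\mathbf{W})\bigr\rangle\ge0$.

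To this end I localize $\delta_a$ in a coordinate chart around $a$ by a cutoff $\chi(\xi_\ve)$ with $\xi_\ve\in\mathbf{D}_\infty$ chosen to be small precisely when $X^{\ve,h}_1$ is close to $a$; the surjectivity of $D\psi(h)_1\colon\cH\to T_a\cM$ supplies the uniform nondegeneracy \eqref{wat1.eq} that makes $\chi(\xi_\ve)\cdot\delta_a(X^{\ve,h}_1)\in\tilde{\mathbf{D}}_{-\infty}$ well defined (Propositions \ref{pr.comp1}--\ref{pr.comp2}). Since $\delta_a$ is supported at $a$ and $\chi\ge0$ equals $1$ near $0$, positivity and $\mathbf{1}_{\hat{B}_R}(\ve\mathbf{W})\ge 1-\mathbf{1}_{\{\ve\mathbf{W}\notin\hat{B}_R\}}$ give
\[
\bigl\langle \delta_a(X^{\ve,h}_1),\mathbf{1}_{\hat{B}_R}(\ve\mathbf{W})\bigr\rangle
\ \ge\ \mathbb{E}\bigl[\chi(\xi_\ve)\cdot\delta_a(X^{\ve,h}_1)\bigr]-\mu_\ve\bigl(\{\ve\mathbf{W}\notin\hat{B}_R\}\bigr),
\]
where $\mu_\ve$ is the finite Sugita measure of $\chi(\xi_\ve)\cdot\delta_a(X^{\ve,h}_1)$. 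The small-noise expansion $X^{\ve,h}_1=a+\ve\,\Xi^h_1+O(\ve^2)$ in $\mathbf{D}_\infty$ in the chart, with $\Xi^h_1$ a Gaussian whose covariance is nondegenerate (again by surjectivity of $D\psi(h)_1$), together with Proposition \ref{pr.comp2} applied after rescaling by $\ve^{-1}$, yields $\mathbb{E}[\chi(\xi_\ve)\cdot\delta_a(X^{\ve,h}_1)]=\ve^{-n}(c_0+o(1))$ for a constant $c_0>0$, so $\ve^2\log\mathbb{E}[\chi(\xi_\ve)\cdot\delta_a(X^{\ve,h}_1)]\to0$. For the subtracted term, Sugita's capacity bound gives $\mu_\ve(\{\ve\mathbf{W}\notin\hat{B}_R\})\le\|\chi(\xi_\ve)\cdot\delta_a(X^{\ve,h}_1)\|_{p,-k}\,\mathrm{Cap}_{q,k}(\{\ve\mathbf{W}\notin\hat{B}_R\})$, where the Sobolev norm grows only polynomially in $\ve^{-1}$ (a Kusuoka--Stroock estimate as in \eqref{eq.KS_est}, stable under the present shift because $\ve^{-2}\sigma_{X^{\ve,h}_1}$ converges to the nondegenerate deterministic covariance) while $\mathrm{Cap}_{q,k}(\{\ve\mathbf{W}\notin\hat{B}_R\})\le C\exp(-c(R)/\ve^2)$ by the Schilder-type exponential estimate for capacities of the rough path lift \cite{in1}. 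Hence the subtracted term is $o(\ve^{-n})$, so $\bigl\langle \delta_a(X^{\ve,h}_1),\mathbf{1}_{\hat{B}_R}(\ve\mathbf{W})\bigr\rangle\ge\tfrac12 c_0\ve^{-n}$ for small $\ve$, whose $\ve^2\log$ tends to $0$; combining the three displays and letting $\ve\searrow0$ gives \eqref{eq.0604}.

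I expect the main obstacle to be the last step: showing that the Kusuoka--Stroock nondegeneracy estimate survives the (large) Cameron--Martin translation $w\mapsto w+h/\ve$ with only polynomial loss in $\ve^{-1}$, and proving the exponential capacity decay $\mathrm{Cap}_{q,k}(\{\ve\mathbf{W}\notin\hat{B}_R\})\le Ce^{-c(R)/\ve^2}$; these, together with the distributional Girsanov identity of the first step, are exactly where the two hypotheses on $h$ --- that $\langle h,\bullet\rangle_{\cH}\in\cW^*$ and that $D\psi(h)_1$ is surjective --- are used essentially.
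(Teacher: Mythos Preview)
Your first two moves --- the Cameron--Martin shift and the bound $|\ve^{-1}\langle h,w\rangle|\le cR/\ve^2$ on $\{\ve\mathbf{W}\in\hat{B}_R\}$ coming from $\langle h,\bullet\rangle_{\cH}\in\cW^*$ --- are exactly what the paper does. The gap is in your third step, and it is precisely the obstacle you flag at the end.

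Your localizing functional $\xi_\ve$ is \emph{target-based}: it is small when $X^{\ve,h}_1$ is near $a$. But the surjectivity of $D\psi(h)_1$ only gives nondegeneracy of the Malliavin covariance on a \emph{rough-path} neighbourhood of $\mathbf{0}$, because $\ve^{-2}\sigma_{X^{\ve,h}_1}=\Gamma(\tau_h(\ve\mathbf{W}),\lambda^\ve)_1$ is controlled through Lyons continuity of $\Gamma$ at $(\mathbf{h},0)$. The set $\{|\xi_\ve|\le 2\}=\{X^{\ve,h}_1\ \text{near}\ a\}$ can contain $w$ with $\ve\mathbf{W}$ far from $\mathbf{0}$, so condition \eqref{wat1.eq} is not supplied by your hypothesis. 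Likewise, your subtracted term needs $\|\chi(\xi_\ve)\cdot\delta_a(X^{\ve,h}_1)\|_{p,-k}=O(\ve^{-\nu})$; but transporting \eqref{eq.KS_est} through the shift $w\mapsto w+h/\ve$ by Cameron--Martin and H\"older produces a factor $\exp(c\|h\|_{\cH}^2/\ve^2)$, not a polynomial, so the Kusuoka--Stroock bound does \emph{not} survive the shift with only polynomial loss.

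The paper avoids both issues by choosing $\xi_\ve$ to be rough-path based, namely
\[
\xi_\ve \;=\; R^{-4m}\bigl(\|(\ve\mathbf{W})^1\|_{\alpha,4m-B}^{4m}+\|(\ve\mathbf{W})^2\|_{2\alpha,2m-B}^{2m}\bigr).
\]
Two things then happen simultaneously. First, $\chi(\xi_\ve)\le \mathbf{1}_{\hat{B}_R}(\ve\mathbf{W})$ whenever $\mathrm{supp}\,\chi\subset[-1,1]$, so by positivity one gets the lower bound $\langle\delta_a(X^{\ve,h}_1),\mathbf{1}_{\hat{B}_R}(\ve\mathbf{W})\rangle\ge\mathbb{E}[\chi(\xi_\ve)\cdot\delta_0(F_\ve)]$ with $F_\ve=\ve^{-1}(\hat X^{\ve,h}_1-a)$; no subtraction, no capacity estimate, and hence no need for a shifted Kusuoka--Stroock bound. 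Second, $\{|\xi_\ve|\le 2\}$ is now a small rough-path ball, on which $\Gamma(\tau_h(\ve\mathbf{W}),\lambda^\ve)_1$ is uniformly close to $\Gamma(\mathbf{h},0)_1>0$ by Lyons continuity; this is exactly \eqref{wat1.eq} with a $\rho>0$ independent of $\ve$ and $R$, so Propositions~\ref{pr.comp1}--\ref{pr.comp2} apply directly to give $\mathbb{E}[\chi(\xi_\ve)\cdot\delta_0(F_\ve)]\to\mathbb{E}[\delta_0(F_0)]>0$. With this single change of $\xi_\ve$ your sketch becomes the paper's proof.
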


\begin{proof}
In this proof $R \in (0, R_0)$ and $\ve \in (0,\ve_0)$, where
$R_0 >0$ and $\ve_0 >0$ are sufficiently small constants
and may vary from line to line.

Take a coordinate neighborhood $\tilde{U}$ of $a$.
We also view $\tilde{U}$ as a bounded open subset of $\R^n$.
By applying a dilation on $\R^n$ to $\tilde{U}$ if necessary
we may also assume that $\delta_a$ on $\cM$ (with respect to $\mathbf{vol}$) corresponds to the usual $\delta_a$ on $\R^n$
(with respect to the standard Lebesgue measure).
We also take another open subset $U$ so that
$a \in U \subset \bar{U}  \subset \tilde{U}$.

For $F \in \mathbf{D}_{2,1} (\cM)$  that take values in 
 $\tilde{U}$ (or $F \in \mathbf{D}_{2,1} (\cM)$ restricted 
 to a subset of $\{ F \in  \tilde{U}\}$), 
 there are two Malliavin covariance matrices.
One is the original one defined with respect to
the Riemannian metric $\cM$,
while the other is the standard one for $\R^n$-valued
Wiener functionals via the inclusion $\tilde{U}\subset \R^n$.
Since there is a constant $C=C(\tilde{U})>0$ 
such that
\begin{equation}\label{eq.MC_hikaku}
C^{-1} \det \sigma_F (w) \le
\det \tilde\sigma_F (w) \le C  \det \sigma_ F(w)\qquad
\mbox{on $\{ w\in \cW \mid F (w) \in  \tilde{U}\}$},
\end{equation}
either one of the two works.
In what follows we will use the standard one for $\R^n$-valued
Wiener functionals and denote it by $\sigma_F$ again
by slightly abusing the notation.

For $b \in \pi^{-1} (a)$,
there exist a smooth map $\hat\pi \colon \cP \to \R^n$
and a open neighborhood $V$ of $b$ such that
$V \subset \pi^{-1} (\tilde{U})$ and
 $\pi\restriction_V \equiv \hat\pi\restriction_V$.
We can extend $\hat\pi$ again  so that it is 
 a smooth function $\hat\pi \colon \R^M \to \R^n$ with 
 compact support. 
 (Recall the embedding $\iota\colon \cP \hookrightarrow \R^M$ in Remark \ref{rem.embed}.)
We write $\hat{X}_t^{\ve} = \hat\pi (U_t^{\ve} )$.  
When we need to specify the dependency on $u$ and $w$,
we write $\hat{X}_t^{\ve} = \hat{X}^\ve (t,u,w)$.
(We will use the same notation for $X_t^{\ve}$ and $U_t^{\ve}$, too.)
  
Let $\chi:{\mathbb R} \to {\mathbb R}$ be as in Proposition \ref{pr.comp2}.
Moreover, we assume that $\chi$ is even and
non-increasing on $[0,\infty)$
so that $\chi$ takes values in $[0,1]$.
Take any $f \in C^\infty (\R^n, [0, \infty))$ whose support is 
contained in the unit ball and set $f_j  = j^n f(j\,\cdot\,)$.
Then, $\lim_{j\to \infty} f_j =\delta_0$ in $\cS^\prime (\R^n)$.
If we set $f_j^{\ve, a}  = \ve^{-n} f_j ((\,\cdot\, -a)/\ve)$, 
then $\lim_{j\to \infty} f_j^{\ve, a} =\delta_a$ in $\cS^\prime (\R^n)$.
There exists $j_0 >0$ such that the support of 
$f_j^{\ve, a}$ is contained in $U$ for every $j \ge j_0$ and $\ve \in (0,1]$.
In that case, $f_j^{\ve, a}$ can also be viewed as a function on $\cM$.
We will assume $j \ge j_0$ and set $b:= \phi (h)_1 \in \cP$.

Then, it holds that
\begin{align}
\nu^{\ve}_{u, a}( \hat{B}_R ({\bf h}) ) 
&=
\int  I_{\hat{B}_R ({\bf h})} ({\bf w})  \nu^{\ve}_{u, a} (d {\bf w})
=
\int  I_{\hat{B}_R } (\tau_{-h} ({\bf w}))  \nu^{\ve}_{u, a} (d {\bf w})
\nn
\\
&=
\int  I_{\hat{B}_R } (\tau_{-h} (\ve {\bf W}))  \theta^{\ve}_{u, a} (dw)
\nn
\\
&\ge
\int  \chi \left( 
\frac{ \|\tau_{-h} (\ve {\bf W})^1 \|_{\alpha, 4m -B}^{4m} 
+ 
\|\tau_{-h} (\ve {\bf W})^2 \|_{2\alpha, 2m -B}^{2m}}{R^{4m}}
 \right)  \theta^{\ve}_{u, a} (dw)
\nn
\\
&= 
{\mathbb E}  \Bigl[   
\chi \left( 
\frac{ \|\tau_{-h} (\ve {\bf W})^1 \|_{\alpha, 4m -B}^{4m} 
+ 
\|\tau_{-h} (\ve {\bf W})^2 \|_{2\alpha, 2m -B}^{2m}}{R^{4m}}
 \right) 
 \delta_{a} (X^{\ve}_1) 
 \Bigr]
 \nn\\
 &= 
 \lim_{j\to\infty}
 {\mathbb E}  \left[   
\chi \left( 
\frac{ \|\tau_{-h} (\ve {\bf W})^1 \|_{\alpha, 4m -B}^{4m} 
+ 
\|\tau_{-h} (\ve {\bf W})^2 \|_{2\alpha, 2m -B}^{2m}}{R^{4m}}
 \right) 
 f_j^{\ve, a} (X^{\ve}_1) 
 \right]
 \nn\\
 &= 
 \lim_{j\to\infty}
 {\mathbb E}  \left[   
\chi \left( 
\frac{ \|\tau_{-h} (\ve {\bf W})^1 \|_{\alpha, 4m -B}^{4m} 
+ 
\|\tau_{-h} (\ve {\bf W})^2 \|_{2\alpha, 2m -B}^{2m}}{R^{4m}}
 \right) 
 f_j^{\ve, a} (\hat{X}^{\ve}_1) 
 \right]
\nn\\
&=
e^{ - \| h\|^2_{{\cal H}}/2\ve^2 }  
\lim_{j\to\infty} {\mathbb E} \Bigl[ e^{ - \la h, w\ra /\ve } 
  \chi \left( 
\frac{ \| (\ve {\bf W})^1 \|_{\alpha, 4m -B}^{4m} 
+ 
\| (\ve {\bf W})^2 \|_{2\alpha, 2m -B}^{2m}}{R^{4m}}
 \right) 
 \nn
 \\
 & \qquad\qquad \quad \qquad\qquad \quad
  \times \ve^{-n} 
  f_j \left(  \ve^{-1}[\hat{X}^{\ve}(1, u, w+ \frac{h}{\ve}) -a ]\right) 
  \Bigr]
  \nn\\
&\ge
e^{ - \| h\|^2_{{\cal H}}/2\ve^2 }  e^{-cR/\ve^2}
\ve^{-n} 
\lim_{j\to\infty} {\mathbb E} \Bigl[ 
  \chi \left( 
\frac{ \| (\ve {\bf W})^1 \|_{\alpha, 4m -B}^{4m} 
+ 
\| (\ve {\bf W})^2 \|_{2\alpha, 2m -B}^{2m}}{R^{4m}}
 \right) 
 \nn
 \\
 & \qquad\qquad \quad \qquad\qquad \qquad\quad
  \times f_j \left(  \ve^{-1}[
\hat{X}^{\ve}(1, u, w+ \frac{h}{\ve}) -a ]\right) \Bigr].  
\label{eq.210609-1}
\end{align}
Note that when $R>0$ and $\ve >0$ are sufficiently small, 
$U_1^{\ve}$ is close enough to $b$ and therefore we have
$X_1^{\ve}= \hat{X}_1^{\ve}$.
We used Cameron-Martin formula, too.
We now check the last inequality.   
Form the assumption that $\langle h, \bullet\rangle_{\cH}\in \cW^*$
 and the fact that $(\alpha, 4m)$-Besov norm 
(of the first level path)  is stronger  than the usual 
sup-norm, we have $|\langle h, w\rangle_{\cH} |\le c R/\ve$
for a certain constant $c>0$ if 
$\| (\ve {\bf W})^1 \|_{\alpha, 4m -B} \le R$.

Set 
\begin{align*}
\xi_{\ve} &= R^{-4m}  (\| (\ve {\bf W})^1 \|_{\alpha, 4m -B}^{4m} 
+ 
\| (\ve {\bf W})^2 \|_{2\alpha, 2m -B}^{2m} ),
\\
F_{\ve} &=\ve^{-1}\left[\hat{X}^{\ve}\left(1, x, w+ \frac{h}{\ve} \right) -a \right].
\end{align*}
Let $\Gamma$ as in \eqref{def.Gamma}
(with $(x_t) =(U_t)$, $G =\hat\pi$ and $m =M$).
Then, Malliavin covariance of 
 $F_{\ve}$  
 equals $\Gamma (\tau_h (\ve {\bf W}) ,\lambda^{\ve})_1$,
which tends to 
$\Gamma ({\bf h} , 0)_1$ as $(\ve {\bf W}, \lambda) \to ({\bf 0}, 0)$.
Note that $\Gamma ({\bf h} , 0)_1$ is the deterministic 
Malliavin covariance of $\psi_1$ at $h$. 
As is well-known, $D\psi (h)_1$ is surjective 
if and only if $\Gamma ({\bf h} , 0)_1$ is a strictly positive 
symmetric matrix.
Hence, 
when $R_0>0$ and $\ve_0 >0$ are sufficiently small, 
there exists a constant $\rho >0$ such that 
Condition \eqref{wat1.eq} holds 
for every $\ve \in(0,\ve_0]$ and $R\in (0,R_0]$.
($\rho$ does not depend on $\ve$ or $R$.)
Now we can apply Proposition \ref{pr.comp1} to 
the right hand side of \eqref{eq.210609-1} to obtain 
\[
\nu^{\ve}_{x, a}( \hat{B}_R ({\bf h}) ) 
\ge
e^{ - \| h\|^2_{{\cal H}}/2\ve^2 }  e^{-cR/\ve^2}\ve^{-n} 
{\mathbb E}[\chi (\xi_{\ve}) \delta_0 (F_{\ve})].
\] 
It suffices to prove that 
$\lim_{\ve\searrow 0} {\mathbb E}[\chi (\xi_{\ve}) \delta_0 (F_{\ve})]$
exists and strictly positive.
Since $\chi$ is constant near the origin, 
 \eqref{wat3.eq} clearly holds with $\xi_0 =0$.
We will check \eqref{wat2.eq}. 
 
Since
$(U^{\ve}_t)$ is viewed as the solution of the vector space-valued
SDE with small noise, its asymptotic behavior as $\ve\searrow 0$
is well-known:
\[
U^{\ve}  \left(1, u, w+ \frac{h}{\ve} \right) 
 = b+  \ve \eta_1 + O (\ve^2) 
 \qquad \mbox{in ${\bf D}_{\infty} ({\mathbb R}^M)$},
 \]
where $\eta_1 =\eta_1 (w)$ is a certain element of
 the first order Wiener chaos (which can actually be written down 
explicitly as a Wiener integral). 
It is easy to see from this that  
\[
F_{\ve} = \nabla \hat\pi (b) \langle \eta_1\rangle
+ O (\ve) 
 \qquad \mbox{in ${\bf D}_{\infty} ({\mathbb R}^n)$}.
 \]
$\nabla \hat\pi (b) \langle \eta_1\rangle$  is also 
an element of  the first order Wiener chaos and therefore
induces a Gaussian measure of mean zero on ${\mathbb R}^n$.
Since its covariance matrix equals $\Gamma ({\bf h} , 0)_1$,
the Gaussian measure is non-degenerate 
and hence its probability density function is strictly positive 
at the origin. 
From this and  Proposition \ref{pr.comp2}  we see that
\[
\lim_{\ve \searrow 0}
 {\mathbb E} [ \chi(\xi_{\ve}) \delta_0  (F_{\ve})] = 
 {\mathbb E} [\delta_0 (\nabla \hat\pi (b) \langle \eta_1\rangle )]
 \in (0,\infty).
 \]
This completes the proof of Proposition \ref{pr.low_ball}. 
 \end{proof}

\begin{proof}[Proof of the lower estimate in Theorem \ref{tm.ldp.dlt} {\rm (i)}.]
Let $O \subset G\Omega^B_{\al, 4m} ({\mathbb R}^d)$ be an open 
set with
$\inf_{{\bf w} \in O} I_1 ({\bf w})  <\infty$.
Then, we see from Lemma \ref{pr.CMnice} that
for every $\kappa > 0$ we can find $h \in {\cal Q}^{u, a}$ such that
(1) ${\bf h} \in O$ and 
(2) $0\le ( \|h\|^2_{\cH}/2 )- \inf_{{\bf w} \in O} I_1 ({\bf w}) <\kappa$
and (3) $h$ satisfies the assumption of Propositions \ref{pr.low_ball}.
Using Proposition \eqref{pr.low_ball}, we have 
\[
\liminf_{\ve \searrow 0} \ve^2
\log \nu^{\ve}_{u, a}(O) \ge
\liminf_{\ve \searrow 0} \ve^2
\log \nu^{\ve}_{u, a}( \hat{B}_R ({\bf h}) )
\ge  
-  \inf_{{\bf w} \in O} I_1 ({\bf w})     -cR -\kappa
\]
for every sufficiently small $R>0$.
Letting $R\searrow 0$ and then $\kappa\searrow 0$, we have the 
desired lower estimate.  
\end{proof}

%%%%%%%%%%%%%%%%%%%%%%%%%%%\newpage
%%%%%%%%%%%%%%%%%%%%%%%%%%
\section{Upper estimate} \label{sec.upper}

In this section we prove the upper estimate of LDP in 
Theorem \ref{tm.ldp.dlt} {\rm (i)}.
By a standard argument in the large deviation theory, 
we can easily prove it by using
Proposition \ref{pr.exp_tight} and 
Proposition \ref{pr.limsup_ball}, which will be given below.
(That $\nu^{\ve}_{u,a}$ may not be a probability measure is 
irrelevant in this part.)
In this section the positive constant $\nu$ varies from line to line.

The key of proving  the upper estimate is a localized version 
of IbP formula.
This type of IbP formula is not new.
For example, it was used in the proofs of 
Propositions \ref{pr.comp1} and  \ref{pr.comp2}.
In the manifold-valued Mallaivin calculus in \cite{ta},  
a quite similar argument already appeared.
Concerning this, see also \cite{yo}.

We consider the projected process 
 $X^{\ve}= \pi (U^{\ve})$  defined by SDE \eqref{eq.sde_on_P}.
Let  $a \in U \subset \bar{U}  \subset \tilde{U}$
as in \eqref{eq.MC_hikaku}.
By taking $\tilde{U}\subset \R^n$ slightly smaller if necessary, 
we can extend
the coordinate functions on $\tilde{U} \ni x \mapsto x^i \in \R$, $1 \le i \le n$,
 to a smooth function on $\cM$, which is denoted by $\beta^i$.
If we set $\hat{X}^{\ve}_1 
=\{\beta^i(X^{\ve}_1)\}_{i=1}^n$, 
then $\hat{X}^{\ve}_1 \in \mathbf{D}_{\infty} (\R^n)$
and $\hat{X}^{\ve}_1= X^{\ve}_1$ on $\{w\mid X^{\ve}_1(w) \in \tilde{U} \}$.
It should be recalled that the Sobolev norm
$\| \hat{X}^{\ve}_1\|_{p,k}$ is bounded in $\ve \in (0,1]$ for every 
$1<p<\infty$ and $k \in \N$.

\begin{lemma}\label{lem.0624}
Let the notation be as above 
and write $F_{\ve} :=\hat{X}^{\ve}_1$ for notational simplicity.
Suppose that $\eta_1, \eta_2 \colon \R^n \to [0,1]$ be smooth functions 
with compact support in $\tilde{U}$ such that $\eta_2 \equiv 1$ 
on the support of $\eta_1$.
Then, for every $f \in \cS (\R^n)$, $G \in \mathbf{D}_{p,k}$
($1<p<\infty, k\in \N_+$) and $1\le i \le n$,
the following assertions hold true:
\\
{\rm (i)}~$\gamma_{F_{\ve}}   \eta_1 (F_{\ve})
 \in \mathbf{D}_\infty ({\rm Mat} (n,n))$. 
Here, $\gamma_{F_{\ve}}$ is the inverse of the
Malliavin covariance matrix $\sigma_{F_{\ve}}$.
 \\
{\rm (ii)}~If we set 
\[
\Phi_i^{\eta_1} (\,\cdot\, ;G) = \sum_{j=1}^d D^* 
\left( \gamma^{ij}_{F_{\ve}}  \eta_1 (F_{\ve}) \cdot 
G \cdot DF_{\ve}^{j}  \right),
\]
then $\Phi_i^{\eta_1} (\,\cdot\, ;G)\in \mathbf{D}_{p',k-1}$ for 
every $p' \in (1, p)$.
Moreover, there exist positive 
 constant $c=c_{p,p'}$ and $\nu$ 
such that 
\begin{equation}\label{eq.0625-1}
\| \Phi_i^{\eta_1} (\,\cdot\, ;G) \|_{p', k-1} = c \ve^{-\nu} \|G\|_{p,k},
\qquad
0<\ve \le 1.
\end{equation}
Here, $c=c_{p,p'}$ and $\nu$ are independent of $\ve$ and $G$.
Moreover, $\nu$ does not depend on $(p,p')$, either.
\\
{\rm (iii)}~We have
\begin{align}\label{eq.loc.ibp}
{\mathbb E}[\partial_i f (F_{\ve}) \eta_1 (F_{\ve}) G]
&=
{\mathbb E}[ f (F_{\ve})  \eta_2 (F_{\ve})
\Phi_i^{\eta_1}(\,\cdot\,;G) ].
\end{align}
\end{lemma}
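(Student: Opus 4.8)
These three assertions are the ingredients of a \emph{localized} integration-by-parts formula, and I would establish them in the order (i) $\Rightarrow$ (ii) $\Rightarrow$ (iii). The underlying difficulty is that $F_\ve=\hat X^\ve_1$, unlike the genuinely $\cM$-valued functional $X^\ve_1$, is degenerate outside the chart, since the coordinate extensions $\beta^i$ carry no information there; one must therefore ``repair'' the Malliavin covariance off $\tilde U$ before inverting it, which is exactly what the cutoffs are for.

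For (i), I would introduce the repaired covariance
\[
\hat\sigma_\ve:=\sigma_{F_\ve}+\bigl(1-\eta_2(F_\ve)\bigr)\,\mathrm{Id}_n\ \in\ \mathbf{D}_\infty(\mathrm{Mat}(n,n)),
\]
whose Sobolev norms are bounded uniformly in $\ve$ because $\|F_\ve\|_{p,k}$ is (as recalled in the setup of the lemma), and which satisfies $\hat\sigma_\ve=\sigma_{F_\ve}$ on $\{\eta_2(F_\ve)=1\}$, hence on $\{\eta_1(F_\ve)\ne0\}$. On $\{\eta_2(F_\ve)=0\}$ we have $\hat\sigma_\ve\ge\mathrm{Id}_n$; on $\{\eta_2(F_\ve)>0\}$ --- which, after shrinking the cutoffs if necessary, is contained in $\{X^\ve_1\in\tilde U\}$, where $F_\ve=X^\ve_1$ --- the Minkowski determinant inequality for positive semidefinite matrices gives $\det\hat\sigma_\ve\ge\det\sigma_{F_\ve}$, and \eqref{eq.MC_hikaku} together with the Kusuoka--Stroock estimate \eqref{eq.KS_est} gives $(\det\sigma_{F_\ve})^{-1}\le C(\det\sigma_{X^\ve_1})^{-1}$ with $\|(\det\sigma_{X^\ve_1})^{-1}\|_{L^p}\le C_p\ve^{-\nu}$. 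Hence $(\det\hat\sigma_\ve)^{-1}\in\cap_{1<p<\infty}L^p$ with $\|(\det\hat\sigma_\ve)^{-1}\|_{L^p}\le 1+CC_p\ve^{-\nu}$, so $\hat\sigma_\ve$ is non-degenerate in the sense of Malliavin. The identity $D\hat\gamma_\ve^{ij}=-\sum_{k,l}\hat\gamma_\ve^{ik}(D\hat\sigma_\ve^{kl})\hat\gamma_\ve^{lj}$ for $\hat\gamma_\ve:=\hat\sigma_\ve^{-1}$, combined with Meyer's inequalities, H\"older's inequality and induction on the order of differentiation, then shows $\hat\gamma_\ve\in\mathbf{D}_\infty$ with $\|\hat\gamma_\ve\|_{p,k}\le c_{p,k}\ve^{-\nu_k}$, the power $\nu_k$ depending on $k$ and $n$ but not on $p$ (each differentiation only introduces further factors of $\hat\gamma_\ve$, and $\|(\det\hat\sigma_\ve)^{-1}\|_{L^q}$ costs a fixed power of $\ve$ independent of $q$). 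Since $\hat\gamma_\ve\,\eta_1(F_\ve)=\gamma_{F_\ve}\,\eta_1(F_\ve)$ almost surely --- the left-hand side being the intended $\mathbf{D}_\infty$-version --- the product rule for Sobolev norms yields (i) together with $\|\gamma_{F_\ve}\eta_1(F_\ve)\|_{p,k}\le c\,\ve^{-\nu}$, with $\nu$ independent of $p$.

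Assertion (ii) then follows routinely: by Meyer's inequalities $D^*$ maps $\mathbf{D}_{p',k}(\cH)$ boundedly into $\mathbf{D}_{p',k-1}$, so it suffices to bound $\gamma^{ij}_{F_\ve}\eta_1(F_\ve)\,G\,DF_\ve^{j}$ in $\mathbf{D}_{p',k}(\cH)$. This is a product of $\gamma^{ij}_{F_\ve}\eta_1(F_\ve)\in\mathbf{D}_\infty$ (norm $\le c\,\ve^{-\nu}$ by (i)), of $DF_\ve^{j}$ (Sobolev norms bounded in $\ve$), and of $G\in\mathbf{D}_{p,k}$; applying the multilinear H\"older inequality for Sobolev norms with exponents $q,q,p$ where $2/q=1/p'-1/p>0$ gives $\|\Phi_i^{\eta_1}(\,\cdot\,;G)\|_{p',k-1}\le c_{p,p'}\,\ve^{-\nu}\|G\|_{p,k}$, with $\nu$ the power inherited from (i), independent of $(p,p')$; in particular $\Phi_i^{\eta_1}(\,\cdot\,;G)\in\mathbf{D}_{p',k-1}$.

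For (iii), I would first prove the auxiliary identity ${\mathbb E}[\partial_i g(F_\ve)\eta_1(F_\ve)G]={\mathbb E}[g(F_\ve)\Phi_i^{\eta_1}(\,\cdot\,;G)]$ for every $g\in\cS(\R^n)$: from the chain rule $\langle D[g(F_\ve)],DF_\ve^{k}\rangle_\cH=\sum_j\partial_jg(F_\ve)\sigma^{jk}_{F_\ve}$, contracting against $\gamma^{ik}_{F_\ve}\eta_1(F_\ve)$ (legitimate a.s., since $\sigma_{F_\ve}$ is invertible on $\{X^\ve_1\in\tilde U\}\supseteq\{\eta_1(F_\ve)\ne0\}$ by \eqref{eq.KS_est}, and both sides vanish off that set) and then using the duality ${\mathbb E}[\langle D\Psi,V\rangle_\cH]={\mathbb E}[\Psi\,D^*V]$ with $\Psi=g(F_\ve)$ and $V=\sum_k\gamma^{ik}_{F_\ve}\eta_1(F_\ve)\,G\,DF_\ve^{k}\in\mathrm{Dom}(D^*)$ (by (i)--(ii)). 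Then, for the given $f\in\cS(\R^n)$, apply this with $g=f\eta_2\in C_c^\infty(\tilde U)\subseteq\cS(\R^n)$: since $\eta_2\eta_1=\eta_1$ and $(\partial_i\eta_2)\,\eta_1=0$ (because $\eta_2\equiv1$ on the support of $\eta_1$), the left-hand side collapses to ${\mathbb E}[\partial_i f(F_\ve)\eta_1(F_\ve)G]$ while the right-hand side equals ${\mathbb E}[f(F_\ve)\eta_2(F_\ve)\Phi_i^{\eta_1}(\,\cdot\,;G)]$, which is \eqref{eq.loc.ibp}. The main obstacle is (i): once the $\ve$-uniform $\mathbf{D}_\infty$-bound on the localized inverse covariance is in hand, (ii) and (iii) are bookkeeping with Meyer's inequalities and H\"older. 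Obtaining that bound is where one must marry the Kusuoka--Stroock estimate \eqref{eq.KS_est} for the $\cM$-valued $X^\ve_1$ with the Euclidean comparison \eqref{eq.MC_hikaku}, and verify that the repair $\hat\sigma_\ve$ spoils neither the non-degeneracy nor the uniform Sobolev bounds.
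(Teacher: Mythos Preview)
Your argument is correct, and the overall structure matches the paper's, but your treatment of {\rm (i)} takes a genuinely different route. The paper regularizes by the \emph{sequence} $\sigma^m_{F_\ve}:=\sigma_{F_\ve}+m^{-1}\mathrm{Id}_n$, shows $(\gamma^m_{F_\ve})^{ij}\eta_1(F_\ve)\to\gamma^{ij}_{F_\ve}\eta_1(F_\ve)$ in every $L^p$ as $m\to\infty$ via the localized Kusuoka--Stroock bound, then computes $D\{(\gamma^m_{F_\ve})^{ij}\eta_1(F_\ve)\}$, passes to the limit in $L^p(\cH)$, and invokes the closability of $D$; higher derivatives are handled by iterating this. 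Your single repair $\hat\sigma_\ve=\sigma_{F_\ve}+(1-\eta_2(F_\ve))\mathrm{Id}_n$ is more direct: it produces a genuinely non-degenerate $\mathbf{D}_\infty$ matrix in one stroke, so $\hat\gamma_\ve\in\mathbf{D}_\infty$ follows from the standard $D\gamma=-\gamma(D\sigma)\gamma$ argument without any limit or closability step, and the identification $\hat\gamma_\ve\eta_1(F_\ve)=\gamma_{F_\ve}\eta_1(F_\ve)$ transfers the bounds. The paper's approach is the textbook one and avoids introducing the auxiliary cutoff $\eta_2$ already in {\rm (i)}; yours is cleaner but costs the care you flag (``after shrinking the cutoffs if necessary'') to ensure $\{\eta_2(F_\ve)>0\}\subset\{X^\ve_1\in\tilde U\}$, which the paper also handles only implicitly. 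For {\rm (iii)} you again differ slightly: the paper observes, via the explicit expression $\Phi_i^{\eta_1}(\,\cdot\,;G)=-\sum_j\{\langle D(\gamma^{ij}_{F_\ve}\eta_1(F_\ve)G),DF_\ve^j\rangle_\cH+\gamma^{ij}_{F_\ve}\eta_1(F_\ve)G\,LF_\ve^j\}$, that $\Phi_i^{\eta_1}$ vanishes off $\{F_\ve\in\mathrm{supp}(\eta_1)\}$ and hence equals $\eta_2(F_\ve)\Phi_i^{\eta_1}$, whereas you instead apply the auxiliary IbP identity to $g=f\eta_2$ and use $\eta_2\eta_1=\eta_1$, $(\partial_i\eta_2)\eta_1=0$. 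Both routes are short and equivalent.
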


\begin{proof}
By \eqref{eq.KS_est} and  \eqref{eq.MC_hikaku}, 
there exist positive constants $C_p, \nu$ such that 
\begin{equation}\label{eq.loc_KS_est}
{\mathbb E}[
(\det \sigma_{F_\ve })^{-p}  {\bf 1}_{\{F_{\ve} \in \tilde{U} \}}
]^{1/p}  \le C_p \ve^{-\nu},
\qquad
0 < \ve \le 1.
\end{equation}
Here, $C_p$ and $\nu$ are independent of $\ve$.
Moreover, $\nu$ does not depend on $p$, either.

Now we prove {\rm (i)}.
It may be heuristically obvious, but we must take care of the possibility
 that $\gamma_{F_{\ve}}$
is not defined outside $\{F_{\ve} \in \tilde{U} \}$.
For $m \in \N_+$ we set 
$\sigma_{F_\ve}^m :=\sigma_{F_\ve} + m^{-1} {\rm Id}_n$,
where ${\rm Id}_n$ stands for the identity matrix of size $n$.
Then, $(\det \sigma^m_{F_\ve })^{-1} \le m^n$, a.s. 
and its inverse
$\gamma^m_{F_{\ve} }:=(\sigma^m_{F_\ve })^{-1}$ exists.
Moreover, 
$(\det \sigma^m_{F_\ve })^{-1} 
\nearrow (\det \sigma_{F_\ve })^{-1} \in [0,\infty]$, a.s.
Note that $\gamma^m_{F_{\ve} }$ and 
$(\det \sigma^m_{F_\ve })^{-1}$ are both ${\mathbf D}_\infty$-functionals defined on $\cW$.
Recall that 
\[
\gamma^m_{F_{\ve} }
=
(\det \sigma^m_{F_\ve })^{-1}\times
[\mbox{the adjugate matrix of $\sigma^m_{F_\ve }$}].
\]
From this and \eqref{eq.loc_KS_est}, we can easily see that 
$(\gamma^m_{F_{\ve}})^{ij}   \eta_1 (F_{\ve})\to \gamma^{ij}_{F_{\ve}}  \eta_1 (F_{\ve})$ in $L^p$ as $m\to\infty$ ($1<p <\infty$).

Now we calculate the first order derivative.
Note that 
\begin{align}
D\{ (\gamma^m_{F_{\ve}})^{ij}  \eta_1 (F_{\ve})\}
&= -\sum_{k,l} (\gamma^m_{F_{\ve}})^{ik}  \cdot
D\sigma_{F_{\ve}}^{kl} \cdot
(\gamma^m_{F_{\ve}})^{lj}  \cdot
\eta_1 (F_{\ve})
\nn\\
&\qquad +
\sum_{l}
(\gamma^m_{F_{\ve}})^{ij}   \cdot 
\partial_l \eta_1 (F_{\ve})  \cdot DF_{\ve}^l.
\nn%\label{eq.0624-1}
\end{align}
By the same reason as above, this belongs to ${\mathbf D}_\infty (\cH)$ and converges in $L^p (\cH)$, $1<p <\infty$, as $m\to\infty$.
The closability of $D$ implies that
\begin{align}
D\{ \gamma^{ij}_{F_{\ve}}  \eta_1 (F_{\ve})\}
&= -\sum_{k,l} \gamma^{ik}_{F_{\ve}} 
\cdot
D\sigma_{F_{\ve}}^{kl} \cdot
\gamma_{F_{\ve}}^{lj} \cdot \eta_1 (F_{\ve})
+
\sum_{l} \gamma^{ij}_{F_{\ve}}   \cdot 
\partial_l \eta_1 (F_{\ve})  \cdot DF_{\ve}^l
\label{eq.0624-1}
\end{align}
and $\|\gamma^{ij}_{F_{\ve}}  \eta_1 (F_{\ve}) \|_{p,1} = O( \ve^{-\nu})$ for every $1<p<\infty$.
Repeating essentially the same argument for higher order derivatives, 
we can prove that $\|\gamma^{ij}_{F_{\ve}}  \eta_1 (F_{\ve}) \|_{p,k} \le O( \ve^{-\nu})$ for every $1<p<\infty$ and $k \in \N_+$
(if we adjust the value of $\nu >0$).
Thus, we have shown {\rm (i)}.

To prove {\rm (ii)}, just recall that $D^*$ is a bounded linear 
map from $\mathbf{D}_{p,k} (\cH)$ to $\mathbf{D}_{p,k-1}$
for every $p$ and $k$ (see \cite[p. 365]{iwbk} for instance).

Finally, we prove {\rm (iii)}.
From a well-known formula for $D^*$, we see that
\[
\Phi_i^{\eta_1} (\,\cdot\, ;G) 
= -\sum_{j=1}^d \{
\left\la D(\gamma^{ij}_{F_{\ve}} \cdot  \eta_1 (F_{\ve}) \cdot 
G),  DF_{\ve}^j  \right\ra_\cH
+
\gamma^{ij}_{F_{\ve}} \cdot  \eta_1 (F_{\ve}) \cdot 
G \cdot LF_{\ve}^j
\},
\]
where $L =-D^*D$ is the Ornstein-Uhlenbeck operator.
Since this vanishes outside $\{ F_\ve \in {\rm supp} (\eta_1)\}$,
we  have $\Phi_i^{\eta_1} (\,\cdot\, ;G)  = \eta_2 (F_{\ve})\Phi_i^{\eta_1} (\,\cdot\, ;G)$.
As in the proof for 
the standard IbP formula in \eqref{ipb2.eq}, 
we see from the definition of $D^*$ that
\begin{align*}
{\mathbb E}[\partial_i f (F_{\ve}) \eta_1 (F_{\ve}) G]
&=
{\mathbb E}[\la D f (F_{\ve}),  
\sum_{j=1}^d\gamma^{ij}_{F_{\ve}} DF_{\ve}^j  \ra_{\cH} 
\,   \eta_1 (F_{\ve}) G]
%\\&
=
{\mathbb E}[ f (F_{\ve}) \Phi_i^{\eta_1} (\,\cdot\, ;G)].
\end{align*}
This completes the proof of Lemma \ref{lem.0624}.
\end{proof}

Take a smooth function 
$\eta_j \colon \R^n \to [0,1]$, $1 \le j \le 2n+1$, 
with compact support in $\tilde{U}$ with the following properties:
(1) $\eta_1 \equiv 1$ on $U$,
(2) $\eta_{j+1} \equiv 1$ on the support of $\eta_j$ for all $1 \le j \le 2n$.
We write $\eta := \{\eta_i\}_{i=1}^{2n+1}$.

Let $\beta :=(i_1, i_2, \ldots, i_s)$ be a multi-index of length at most $2n$,
that is, $s \le 2n$ and $1 \le i_1, \ldots, i_s \le n$.
For $\beta =(i_1)$,  just set  
$\Phi_{\beta}^{\eta} (\,\cdot\, ;G)=\Phi_{i_1}^{\eta_1} (\,\cdot\, ;G)$.
For $\beta =(i_1, i_2)$,  set
$\Phi_{\beta}^{\eta} (\,\cdot\, ;G)
= \Phi_{i_2}^{\eta_2} (\,\cdot\, , \Phi_{i_1}^{\eta_1} (\,\cdot\, ;G))$.
When $s \ge 3$, write $\beta' = (i_1, \ldots, i_{s-1})$
and set recursively 
$\Phi_{\beta}^{\eta} (\,\cdot\, ;G)
= \Phi_{i_s}^{\eta_s} (\,\cdot\, , \Phi_{\beta'}^{\eta} (\,\cdot\, ;G))$.

\begin{proposition}\label{pr.div_order}
There exists a constant $\nu >0$ such that
\[
\| \delta_a (X^{\ve}_1) \|_{2, -2n}= O (\ve^{-\nu})
\qquad \mbox{as $\ve\searrow 0$.}
\]
\end{proposition}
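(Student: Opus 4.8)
The plan is to bound $\|\delta_a(X^\ve_1)\|_{2,-2n}$ by duality. Since $\mathbf{D}_\infty$ is dense in $\mathbf{D}_{2,2n}$ and $\mathbf{D}_{2,-2n}$ is the dual of $\mathbf{D}_{2,2n}$, it suffices to produce constants $\nu,C>0$, independent of $\ve$, such that
\begin{equation}\label{eq.goal_pr_div}
\bigl| {\mathbb E}[\delta_a(X^\ve_1)\,G] \bigr| \le C\ve^{-\nu}\,\|G\|_{2,2n}
\qquad \text{for all } G\in\mathbf{D}_\infty,\ \ve\in(0,1],
\end{equation}
where ${\mathbb E}$ is the generalized expectation. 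The starting observation is that in the chart $\tilde U$ one can write $\delta_a=\partial^{\beta}g$ on a neighbourhood of $a$, where $\beta=(1,1,2,2,\dots,n,n)$ has length $2n$ and $g$ is the continuous, compactly supported function obtained by multiplying $\prod_{i=1}^{n}(x^i-a^i)_+$ by a smooth cut-off supported in $\tilde U$ (here $x_+=\max(x,0)$). Shrinking $\eta_1$ if necessary, we arrange that $\mathrm{supp}\,\eta_1$ is contained in the neighbourhood of $a$ on which $\delta_a=\partial^\beta g$.

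Next I would pass from the intrinsic object $\delta_a(X^\ve_1)$ on $\cM$ to a Euclidean computation with $F_\ve:=\hat X^\ve_1$. Because $\delta_a$ is supported at $\{a\}$ and $\eta_1\equiv1$ there, $\eta_1\delta_a=\delta_a=\eta_1\partial^\beta g$ as distributions; combining $F_\ve=X^\ve_1$ on $\{X^\ve_1\in\tilde U\}$ with the localization property of the pull-back (Proposition \ref{pr.comp1}, legitimate since $F_\ve$ is non-degenerate on $\{F_\ve\in\tilde U\}$ by \eqref{eq.loc_KS_est}), one gets $\delta_a(X^\ve_1)=\eta_1(F_\ve)\cdot(\partial^\beta g)(F_\ve)$. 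Mollifying $g$ into $g^{(j)}\in C_c^\infty(\tilde U)$ with $g^{(j)}\to g$ uniformly, so $\partial^\beta g^{(j)}\to\partial^\beta g$ in $\mathcal{S}'({\mathbb R}^n)$, Proposition \ref{pr.comp1}(i) gives $\eta_1(F_\ve)(\partial^\beta g^{(j)})(F_\ve)\to\delta_a(X^\ve_1)$ in $\tilde{\mathbf D}_{-\infty}$, reducing everything to Schwartz test functions. Now I would apply the localized integration by parts of Lemma \ref{lem.0624}(iii) exactly $2n$ times, peeling off the derivatives in $\beta$ one at a time using the cut-offs $\eta_1,\eta_2,\dots,\eta_{2n+1}$ (recall $\eta_{j+1}\equiv1$ on $\mathrm{supp}\,\eta_j$): for $f=g^{(j)}$ and $G\in\mathbf{D}_\infty$,
\[
{\mathbb E}\bigl[(\partial^\beta g^{(j)})(F_\ve)\,\eta_1(F_\ve)\,G\bigr]
=\pm\,{\mathbb E}\bigl[g^{(j)}(F_\ve)\,\eta_{2n+1}(F_\ve)\,\Phi^{\eta}_{\beta}(\,\cdot\,;G)\bigr],
\]
and letting $j\to\infty$ (on the left the pairing converges by the $\tilde{\mathbf D}_{-\infty}$ convergence above; on the right $g^{(j)}(F_\ve)\to g(F_\ve)$ uniformly while $\Phi^\eta_\beta(\,\cdot\,;G)$ is fixed in some $\mathbf{D}_{p',0}$ with $p'>1$) yields
\[
{\mathbb E}[\delta_a(X^\ve_1)\,G]=\pm\,{\mathbb E}\bigl[g(F_\ve)\,\eta_{2n+1}(F_\ve)\,\Phi^{\eta}_{\beta}(\,\cdot\,;G)\bigr].
\]

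Finally, since $g$ is bounded,
\[
\bigl|{\mathbb E}[\delta_a(X^\ve_1)\,G]\bigr|\le \|g\|_\infty\,{\mathbb E}\bigl[|\Phi^\eta_\beta(\,\cdot\,;G)|\bigr]\le \|g\|_\infty\,\|\Phi^\eta_\beta(\,\cdot\,;G)\|_{3/2,0},
\]
and applying the quantitative bound of Lemma \ref{lem.0624}(ii) $2n$ times in succession — starting from $G\in\mathbf{D}_{2,2n}$ and letting the integrability exponent decrease along $2=p_0>p_1>\cdots>p_{2n}=3/2$, each step costing a fixed power $\ve^{-\nu_0}$ with $\nu_0$ independent of $\ve$ and of the exponents — gives $\|\Phi^\eta_\beta(\,\cdot\,;G)\|_{3/2,0}\le c\,\ve^{-2n\nu_0}\|G\|_{2,2n}$. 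This establishes \eqref{eq.goal_pr_div} with $\nu=2n\nu_0$, and taking the supremum over $G\in\mathbf{D}_\infty$ with $\|G\|_{2,2n}\le1$ finishes the proof. The delicate point is the second paragraph: one must pass correctly from $\delta_a(X^\ve_1)$ — non-degenerate on $\cM$ — to the Euclidean functional $F_\ve=\hat X^\ve_1$, which need not be non-degenerate away from $\tilde U$, exploiting that all cut-offs and all derivatives involved are supported in $\tilde U$; and one must keep the final estimate against $\|G\|_{2,2n}$, which forces the exponent iteration in Lemma \ref{lem.0624}(ii) to begin at $p_0=2$ and \emph{decrease} rather than increase. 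Everything else is routine, the substantive analytic input — the localized IbP identity together with its $\ve^{-\nu}$ estimate — being already contained in Lemma \ref{lem.0624}.
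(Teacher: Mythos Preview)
Your proof is correct and follows essentially the same route as the paper: write $\delta_a=\partial^{\beta}f$ with $\beta=(1,1,\dots,n,n)$ and $f(x)=\prod_j (x_j-a_j)^+$, pass to the Euclidean representative $F_\ve=\hat X^\ve_1$, approximate by smooth functions, apply the localized IbP of Lemma~\ref{lem.0624}(iii) $2n$ times through the nested cut-offs $\eta_1,\dots,\eta_{2n+1}$, and then iterate the estimate \eqref{eq.0625-1} along a decreasing chain of exponents from $p_0=2$ down to $3/2$. The only cosmetic difference is in the smoothing step: the paper replaces each factor $(x_j-a_j)^+$ by a smooth $\lambda_k(x_j-a_j)$ that agrees with $z^+$ outside $[-\kappa,\kappa]$, so that $\partial^\beta f_k$ is automatically supported in $U$ and one can invoke Taniguchi's manifold-valued pullback directly, whereas you multiply by a cut-off first and then mollify, which requires the extra remark that $\eta_1\partial^\beta g=\eta_1\delta_a$ on $\mathrm{supp}\,\eta_1$.
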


\begin{proof}
If $\{g_k\}_{k\in \N}$ is a sequence of 
smooth functions supported in $U$ such that
$g_k \to \delta_a$ in $\cS^\prime (\R^n)$ as $k\to\infty$,
then we see from the results in \cite{ta} that
\begin{align}
{\mathbb E}[ \delta_a (X^{\ve}_1) G]
&=
\lim_{k\to\infty}{\mathbb E}[ g_k (X^{\ve}_1) G]
\nn\\
&=
\lim_{k\to\infty}{\mathbb E}[ g_k (X^{\ve}_1) \eta_1 (X^{\ve}_1) G]
=
\lim_{k\to\infty}{\mathbb E}[ g_k (F_{\ve}) \eta_1 (F_{\ve}) G]
\label{eq.0621-2}
\end{align}
for every $G \in \mathbf{D}_\infty$,
where we wrote $F_{\ve}=\hat{X}^{\ve}_1$ again.

Next, recall that if we set $f(x) := \prod_{j=1}^n (x_j- a_j)^+$
and $\beta =(1,1,2,2, \ldots, n,n)$,  
then $\partial^{\beta} f = \delta_a$ in the distribution sense.
Take $\kappa >0$ so small that 
$\prod_{j=1}^n [a_j -\kappa, a_j +\kappa] \subset U$ holds.
For this $\kappa$, we can find a sequence $\{\lambda_k\}_{k=1}^\infty$
of  smooth and non-decreasing function on $\R$ such that
(1) $\lambda_k (z)$ coincides with $z^+ :=z\vee 0$ outside 
$[-\kappa, \kappa]$ for all $k$,
(2)  $\lambda_k (z)$ converges to $z^+$
uniformly on $[-\kappa, \kappa]$ as $k\to\infty$.
Set $f_k (x) = \prod_{j=1}^n \lambda_k (x_j -a_j)$.
Then $f_k$ is smooth on $\R^n$ and  $\lim_{k\to\infty} f_k =f$
uniformly on $\R^n$.
Note that $\lim_{k\to\infty}\partial^\beta f_k =\delta_a$  
in the distribution sense and $\partial^\beta f_k \equiv 0$ outside $U$.
So, we may take $g_k = \partial^\beta f_k$ in \eqref{eq.0621-2} above.

By \eqref{eq.loc.ibp} in Lemma \ref{lem.0624} 
and the way $\Phi_{\beta}^{\eta} (\,\cdot\, ;G)$ is defined, 
we have
\begin{align}%
{\mathbb E}[\partial^{\beta} f_k (F_{\ve}) \eta_1 (F_{\ve}) G]
&=
{\mathbb E}[ f_k (F_{\ve})  \eta_{2n+1} (F_{\ve})
\Phi_{\beta}^{\eta}(\,\cdot\,;G) ].
\nn
\end{align}
From this and \eqref{eq.0621-2}, we have
\begin{align}\label{loc.ibp_rpt}
{\mathbb E}[ \delta_a (X^{\ve}_1) G]
&=
{\mathbb E}[ f (F_{\ve})  \eta_{2n+1} (F_{\ve})
\Phi_{\beta}^{\eta}(\,\cdot\,;G) ].
\end{align}
Note that $|f (F_{\ve})  \eta_{2n+1} (F_{\ve}) |$ 
is dominated by  a polynomial in $|F_{\ve}|$.
So, $\|f (F_{\ve})  \eta_{2n+1} (F_{\ve}) \|_{L^p}$ is bounded in $\ve$ for every $1<p<\infty$.
By using \eqref{eq.loc.ibp} and  \eqref{eq.0625-1}
 in Lemma \ref{lem.0624}  repeatedly, 
we can easily show that
\[
\| \Phi_{\beta}^{\eta} (\,\cdot\, ;G) \|_{L^{3/2}} = c \ve^{-\nu} \|G\|_{2,2n},
\qquad
0<\ve \le 1,
\]
if the value of $\nu >0$ is adjusted.
Therefore, the right hand side of \eqref{loc.ibp_rpt} is also 
dominated by $c \ve^{-\nu} \|G\|_{2,2n}$, 
where $c$ is independent of $\ve$ and $k$.  
This proves the proposition.
\end{proof}

\begin{proposition}\label{pr.exp_tight}
The family $\{ \nu^{\ve}_{u,a}\}_{0<\ve \le 1}$
is exponentially tight on $G\Omega^B_{\al, 4m} ({\mathbb R}^d)$, 
that is,
for every $M \in (0,\infty)$, there exists a compact set $K=K_M$ 
such that 
\[
\limsup_{\ve \searrow 0 } \ve^2 \log \nu^{\ve}_{u,a} (K^c)
\le -M.
\]
\end{proposition}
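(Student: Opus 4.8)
\emph{Plan of proof.} The strategy is the standard one for lifts of positive Watanabe distributions: given $M>0$, I will exhibit a compact set $K_M\subset G\Omega^B_{\al,4m}({\mathbb R}^d)$ on whose complement $\nu^{\ve}_{u,a}$ is of order $e^{-M/\ve^2}$, by combining Sugita's measure representation of $\theta^{\ve}_{u,a}$, the capacity bound of Proposition \ref{pr.div_order}, and a Fernique-type capacity tail estimate for the rough path lift ${\cal L}$.

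\emph{Step 1: compact sets and scaling.} Fix $\al'\in(\al,1/2)$, and let $N\colon G\Omega^H_{\al'}({\mathbb R}^d)\to[0,\infty)$ be the $1$-homogeneous $\al'$-H\"older rough path norm (the $\al'$-H\"older distance to the trivial rough path), extended by $N({\bf w})=+\infty$ when ${\bf w}\notin G\Omega^H_{\al'}$. For $R>0$ set $K_R:=\{\,{\bf w}\mid N({\bf w})\le R\,\}$, regarded as a subset of $G\Omega^B_{\al,4m}({\mathbb R}^d)$ through the continuous embedding $G\Omega^H_{\al'}({\mathbb R}^d)\hookrightarrow G\Omega^B_{\al,4m}({\mathbb R}^d)$ recalled in Section \ref{sec.prob}. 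Since an $\al'$-H\"older ball is compact in $G\Omega^H_{\al''}({\mathbb R}^d)$ for $\al''\in(\al,\al')$ (an Arzel\`a--Ascoli argument on the underlying paths) and the latter space embeds continuously into $G\Omega^B_{\al,4m}({\mathbb R}^d)$, the set $K_R$ is compact there. Because $N(\ve{\bf w})=\ve N({\bf w})$ and the trivial rough path belongs to $K_R$, any $w$ with $\ve{\cal L}(w)\in K_R^c$ satisfies $N({\cal L}(w))>R/\ve$; hence
\[
(\ve{\cal L})^{-1}(K_R^c)\ \subseteq\ A_{R/\ve}:=\{\,w\mid N({\cal L}(w))>R/\ve\,\}.
\]
Since ${\cal L}$ is $\infty$-quasi-continuous (Section \ref{sec.prob}) and $N({\cal L}(\cdot))$ is finite quasi-everywhere, $A_{R/\ve}$ coincides, off a slim set, with a quasi-open set, so ${\rm Cap}_{2,2n}(A_{R/\ve})$ is well-defined.

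\emph{Step 2: Sugita's bound and the capacity tail.} By the definitions of $\nu^{\ve}_{u,a}$ and $\theta^{\ve}_{u,a}$, monotonicity, Sugita's theorem, and Proposition \ref{pr.div_order}, there are $C,\nu>0$, independent of $\ve$ and $R$, such that
\[
\nu^{\ve}_{u,a}(K_R^c)=\theta^{\ve}_{u,a}\bigl((\ve{\cal L})^{-1}(K_R^c)\bigr)\le\|\delta_a(X^{\ve}_1)\|_{2,-2n}\,{\rm Cap}_{2,2n}(A_{R/\ve})\le C\,\ve^{-\nu}\,{\rm Cap}_{2,2n}(A_{R/\ve}).
\]
I then invoke the Fernique-type bound for the rough path lift in the quasi-sure sense (as in \cite{in1}; see also \cite{aida}): for every $p\in(1,\infty)$ and $k\in{\mathbb N}_+$ there is $C_{p,k}>0$ with ${\rm Cap}_{p,k}(\{w\mid N({\cal L}(w))>\lambda\})\le C_{p,k}\exp(-\lambda^2/C_{p,k})$ for all $\lambda\ge1$ (a polynomial prefactor would be harmless). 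Combining the two, for $0<\ve\le R$,
\[
\ve^2\log\nu^{\ve}_{u,a}(K_R^c)\le\ve^2\log\bigl(C\,\ve^{-\nu}\,C_{2,2n}\bigr)-\frac{R^2}{C_{2,2n}}\longrightarrow-\frac{R^2}{C_{2,2n}}\qquad(\ve\searrow0).
\]
Given $M$, choosing $R:=\sqrt{C_{2,2n}\,M}$ and $K_M:=K_R$ gives $\limsup_{\ve\searrow0}\ve^2\log\nu^{\ve}_{u,a}(K_M^c)\le-M$, which is the assertion.

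\emph{Main obstacle.} The only substantial input is the Gaussian-type capacity estimate used in Step 2; everything else is bookkeeping (Sugita's theorem, the already-proven Proposition \ref{pr.div_order}, and standard rough-path compactness). Note that the exponent $2$ in $\exp(-\lambda^2/\cdot)$ is precisely what is required: a tail of the form $\exp(-\lambda^{\gamma}/\cdot)$ would produce a useful (finite-negative, or $-\infty$) limit in the last display only for $\gamma\ge2$. A minor point deserving a sentence is the capacitability of $(\ve{\cal L})^{-1}(K_R^c)$, which follows from the $\infty$-quasi-continuity of ${\cal L}$ and the slimness of the relevant exceptional set.
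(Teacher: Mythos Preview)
Your proof is correct and follows essentially the same route as the paper's: Sugita's capacity bound combined with Proposition \ref{pr.div_order} reduces the problem to a Gaussian-type capacity tail estimate for the rough path lift, and compactness comes from a strictly stronger rough path topology together with scaling. The only cosmetic difference is that the paper obtains compactness via the Besov space $G\Omega^B_{\alpha+\kappa,4m}$ (bounded sets there are precompact in $G\Omega^B_{\alpha,4m}$) and invokes \cite[Lemma 7.6]{in1} directly for the Besov ball, whereas you use an $\alpha'$-H\"older ball with $\alpha'>\alpha$; both choices work for the same reasons. One small wording issue: the ``$\alpha'$-H\"older distance to the trivial rough path'' is not itself $1$-homogeneous under dilation (the second level scales like $\ve^2$), so what you need is the homogeneous norm $\|{\bf w}^1\|_{\alpha'\text{-}H}+\|{\bf w}^2\|_{2\alpha'\text{-}H}^{1/2}$ (or the max); with that clarification your scaling identity $N(\ve{\bf w})=\ve N({\bf w})$ is correct and the rest goes through unchanged.
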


\begin{proof}
Let $\hat{B}_{R}= \hat{B}_{R}^{\alpha, 4m}$ be ``ball of radius $R>0$" as in \eqref{def.hatball}.
It is shown in  \cite[Lemma 7.6]{in1} that 
there exists a constant $c=c_{\alpha, 4m} >0$ independent of $R$ such that
\[
{\rm Cap}_{2,2n} (\{w \in W \mid \cL (w) \in  ( \hat{B}_{R}^{\alpha, 4m})^c\}) \le e^{-c R^2}
\qquad
\mbox{for sufficiently large $R >0$.}
\]

Take $\kappa >0$ so small  that Besov parameter
$(\alpha +\kappa, 4m)$ still satisfies \eqref{eq.amam}.
It is well-known that  
$G\Omega^B_{\alpha +\kappa, 4m} ( {\mathbb R}^d)$
 is embedded in $G\Omega^B_{\alpha, 4m} ( {\mathbb R}^d)$
and  every bounded subset of the former is 
precompact in the latter.

By the way $\nu^{\ve}_{u,a}$ is defined, it holds that 
\begin{align*} 
\nu^{\ve}_{u,a} ( ( \hat{B}_{R}^{\alpha+\kappa, 4m})^c)
&=
\theta^{\ve}_{u,a} (\{ w \in \cW \mid  \ve \cL (w) \in 
( \hat{B}_{R}^{\alpha+\kappa, 4m})^c \})
\nn\\
&\le 
\| \delta_a (X^{\ve}_1) \|_{2, -2n} 
{\rm Cap}_{2,2n} (\{w \in W\mid \cL (w) \in  ( \hat{B}_{R/\ve}^{\alpha+\kappa, 4m})^c\}) 
\nn\\
&\le C \ve^{-\nu}e^{-c R^2/\ve^2}
\qquad
\mbox{for sufficiently large $R >0$.}
 \end{align*} 
Here, we used the inequality in Item {\bf (e)} in Subsection \ref{subsec.MC} and
Proposition \ref{pr.div_order}.
Hence, we have
\[
\limsup_{\ve \searrow 0 } \ve^2 \log \nu^{\ve}_{u,a} 
(( \hat{B}_{R}^{\alpha+\kappa, 4m})^c)
\le -c R^2.
\]
For given $M>0$, we choose $R$ 
so large that $cR^2 \ge M$ holds.
Since $ \hat{B}_{R}^{\alpha+\kappa, 4m}$ is precompact 
in $(\alpha, 4m)$-Besov topology, the proof is completed.
\end{proof}

For $R>0$ and ${\bf w} \in G\Omega^B_{\al, 4m} ({\mathbb R}^{d})$, we set 
\begin{equation}\label{def.hatball2}
\bar{B}_{R} ({\bf w})=\{ {\bf v} \in G\Omega^B_{  \al, 4m} ({\mathbb R}^{d}) 
\mid
\|{\bf v}^1 -{\bf w}^1\|_{ \al, 4m -B}^{4m} 
 +  \|{\bf v}^2 - {\bf w}^2 \|_{2 \al, 2m -B}^{2m} \le R^{4m}
 \}.
\end{equation}
Clearly, 
$\{ \bar{B}_{R} ({\bf w}) |R>0\}$ forms a fundamental system of  neighborhoods around ${\bf w}$.
Set 
\[
\Xi_\ve = \|(\ve {\bf W})^1 -{\bf w}^1\|_{ \al, 4m -B}^{4m} 
 +  \|(\ve{\bf W})^2 - {\bf w}^2 \|_{2 \al, 2m -B}^{2m}.
 \]
 Then, for every $\ve \in (0,1]$ and ${\bf w}$,
 $\Xi_\ve = \Xi_\ve (\,\cdot\, ,{\bf w} )$ is $\infty$-quasi continuous 
 and belongs to $\mathbf{D}_\infty$.
Moreover, $\{ \Xi_\ve\}_\ve$ is bounded in $\ve$ in $\mathbf{D}_{p,k}$
for every $1<p<\infty$ and $k \in \N$.
For a smooth,
non-increasing function $\chi \colon [0,\infty) \to [0,1]$ 
such that $\chi \equiv 1$ on $[0,1]$
and $\chi \equiv 0$ on $[2, \infty)$, 
$\chi (\Xi_\ve/ R^{4m}) \in \mathbf{D}_\infty$ satisfies
${\bf 1}_{\bar{B}_{R} ({\bf w})} \circ (\ve\cL) \le \chi (\Xi_\ve/ R^{4m}) 
  \le {\bf 1}_{\bar{B}_{2R} ({\bf w})} \circ (\ve\cL)$ quasi-surely.

Let us recall that  the law of $\ve {\bf W} = \ve \cL (w)$ satisfies 
the standard version 
of Schilder-type LDP on $G\Omega^B_{\al, 4m} ({\mathbb R}^{d})$
with a good rate function $I$, where
\begin{align}
I ({\bf w}) 
= 
\begin{cases}
  \tfrac12  \|h\|^2_{\cH} & (\mbox{if ${\bf w}= {\cal L}(h)$ for some 
  $h \in \cH$}), \\
    \infty &  (\mbox{otherwise}).
  \end{cases}
\nn
\end{align}
By a general result on LDPs (see \cite[Lemma 4.1.6]{dzbk} for example), we then have
\begin{align}\label{eq.0625-2}
\lefteqn{
\lim_{R\searrow 0} \limsup_{\ve\searrow 0} 
 \ve^2 \log \mu  (\{w\in \cW \mid \ve {\bf W}\in \bar{B}_{R} ({\bf w})\})
} \nn\\
 &\le -  \lim_{R\searrow 0}\,
   \inf  \{  I ({\bf v}) \mid {\bf v}\in \bar{B}_{R} ({\bf w}) \}
   = - I ({\bf w}),
 \quad
 {\bf w} \in G\Omega^B_{\al, 4m} ({\mathbb R}^{d}).
 \end{align}

\begin{lemma}\label{lem.0630}
Let the notation be as above. For every $p \in (1,\infty)$ 
and $k \in \N$, it holds that
\[
\lim_{R\searrow 0}
\limsup_{\ve \searrow 0}
 \ve^2 \log \| \chi (\Xi_\ve (\,\cdot\, ,{\bf w} )/ R^{4m}) \|_{p,k}^p
 \le 
 - I ({\bf w}),
  \qquad
 {\bf w} \in G\Omega^B_{\al, 4m} ({\mathbb R}^{d}). 
 \]
\end{lemma}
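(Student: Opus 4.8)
The plan is to dominate the full $\mathbf{D}_{p,k}$-norm of $g_\ve := \chi(\Xi_\ve(\,\cdot\,,{\bf w})/R^{4m})$ by a power of the probability of the small tube $\{\Xi_\ve\le 2R^{4m}\}$, up to a factor that depends on $R$ but is uniform in $\ve$, and then to feed this into the Schilder-type bound \eqref{eq.0625-2}. First I would compute the Malliavin derivatives of $g_\ve$ by the Fa\`a di Bruno chain rule: for $0\le j\le k$, $D^j g_\ve$ is a finite linear combination of tensors of the form $R^{-4mi}\,\chi^{(i)}(\Xi_\ve/R^{4m})\,\bigl(D^{j_1}\Xi_\ve\otimes\cdots\otimes D^{j_i}\Xi_\ve\bigr)$ with $1\le i\le j$ and $j_1+\cdots+j_i=j$ (the $j=0$ term being $g_\ve$ itself). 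Since $\chi$ is constant on $[0,1]$ and on $[2,\infty)$, each $\chi^{(i)}$ with $i\ge1$ is supported in $[1,2]$; together with $0\le\chi\le1$ and $\chi\equiv0$ on $[2,\infty)$ this shows that every such tensor, hence $D^j g_\ve$ itself (pointwise, $\mu$-a.s.), vanishes off $\{\Xi_\ve\le 2R^{4m}\}$, and that on that set it is bounded by $C(R,j)\bigl(1+\sum_{l=1}^j|D^l\Xi_\ve|_{\cH^{\otimes l}}\bigr)^j$ with $C(R,j)$ independent of $\ve$.

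Next, fixing a conjugate pair $q,q'\in(1,\infty)$, I would apply H\"older's inequality. Because $\{\Xi_\ve\}_{0<\ve\le1}$ is bounded in every $\mathbf{D}_{r,l}$ (recorded just before the statement), each $\|D^l\Xi_\ve\|_{L^{r}}$ is bounded uniformly in $\ve$, so
\[
{\mathbb E}\bigl[|D^j g_\ve|_{\cH^{\otimes j}}^p\bigr]
\le C(R,j)^p\,{\mathbb E}\Bigl[\bigl(1+\textstyle\sum_{l=1}^j|D^l\Xi_\ve|_{\cH^{\otimes l}}\bigr)^{pjq}\Bigr]^{1/q}\,
\mu\bigl(\Xi_\ve\le 2R^{4m}\bigr)^{1/q'}
\le C'\,\mu\bigl(\Xi_\ve\le 2R^{4m}\bigr)^{1/q'},
\]
with $C'=C'(R,k,p,q)$ independent of $\ve$. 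Using Meyer's equivalence of Sobolev norms to control $\|g_\ve\|_{p,k}^p$ by $\sum_{j\le k}{\mathbb E}[|D^j g_\ve|^p]$, together with the quasi-sure domination $\chi(\Xi_\ve/R^{4m})\le{\bf 1}_{\bar{B}_{2R}({\bf w})}\circ(\ve\cL)$ noted before the statement (which gives $\mu(\Xi_\ve\le 2R^{4m})\le\mu(\{w:\ve{\bf W}\in\bar{B}_{2R}({\bf w})\})$), I would obtain a constant $C''=C''(R,k,p,q)$, independent of $\ve$, with $\|g_\ve\|_{p,k}^p\le C''\,\mu(\{w:\ve{\bf W}\in\bar{B}_{2R}({\bf w})\})^{1/q'}$ for all $\ve\in(0,1]$.

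It remains to take limits. Taking $\ve^2\log$, the term $\ve^2\log C''$ tends to $0$ as $\ve\searrow0$, whence
\[
\lim_{R\searrow0}\limsup_{\ve\searrow0}\ve^2\log\|g_\ve\|_{p,k}^p
\le\frac{1}{q'}\lim_{R\searrow0}\limsup_{\ve\searrow0}\ve^2\log\mu\bigl(\{w:\ve{\bf W}\in\bar{B}_{2R}({\bf w})\}\bigr)
\le-\frac{1}{q'}I({\bf w}),
\]
where the last step is \eqref{eq.0625-2} (the dilation $R\mapsto2R$ does not affect the $R\searrow0$ limit). Since the left-hand side does not depend on $q'$, letting $q'\searrow1$ yields $\lim_{R\searrow0}\limsup_{\ve\searrow0}\ve^2\log\|g_\ve\|_{p,k}^p\le-I({\bf w})$, which is the assertion; the case $I({\bf w})=\infty$ is already covered by any fixed $q'$. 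The one point requiring genuine care is the bookkeeping in the first step — checking that \emph{all} terms in $D^j g_\ve$, including those carrying derivatives of $\chi$ and negative powers of $R$, are supported where $\Xi_\ve\le 2R^{4m}$, so that it is the full Sobolev norm of $g_\ve$, and not merely its $L^p$-norm, that is tied to the small-tube probability; once the limit $\ve\searrow0$ is taken first, the $R$-dependent constants are harmless.
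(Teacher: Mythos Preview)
Your proof is correct and follows essentially the same route as the paper's: both reduce via Meyer's equivalence to estimating $\|D^j\chi(\Xi_\ve/R^{4m})\|_{L^p}^p$, expand by the chain rule, use that $\chi$ and its derivatives are supported where $\Xi_\ve\le 2R^{4m}$, apply H\"older to separate the indicator of this set from the (uniformly-in-$\ve$ bounded) derivatives of $\Xi_\ve$, invoke \eqref{eq.0625-2}, and finally send the H\"older exponent to $1$. Your Fa\`a di Bruno bookkeeping is a more explicit version of the paper's ``repeating similar computations''; the only cosmetic difference is that your conjugate pair $(q,q')$ plays the role of the paper's $(r,q)$. One minor remark: the inclusion $\{\Xi_\ve\le 2R^{4m}\}\subset\{\ve{\bf W}\in\bar B_{2R}({\bf w})\}$ follows directly from $2R^{4m}\le(2R)^{4m}$ and the definition \eqref{def.hatball2}, rather than from the quasi-sure domination of $\chi$ you cite.
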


\begin{proof}
We will write $\Xi_\ve = \Xi_\ve (\,\cdot\, ,{\bf w} )$.
By Meyer's equivalence, it is enough to estimate 
$\sum_{j=0}^k \|D^j \chi (\Xi_\ve/ R^{4m})  \|_{L^p}^p$,
where $D$ is the $\cH$-derivative. Hence, it amounts to compute 
\[
\max_{0\le j \le k} \limsup_{\ve \searrow 0}
 \ve^2 \log \|D^j \chi (\Xi_\ve/ R^{4m})  \|_{L^p}^p.
\]

For $j=0$, we can easily see that
$\|\chi (\Xi_\ve/ R^{4m})  \|_{L^p}^p \le
 \mu (\{\ve{\bf W} \in \bar{B}_{2R} ({\bf w} )\})$.
 For $j=1$, we have $D\chi (\Xi_\ve/ R^{4m}) 
 = \chi^\prime (\Xi_\ve/ R^{4m}) (D\Xi_\ve)  R^{-4m}$.
Hence, for every $q \in (1,\infty)$ and $R>0$, 
there exists a positive 
constant $C=C_{q, R}$ (independent of $\ve$) such that
\begin{align*}
\|D \chi (\Xi_\ve/ R^{4m})  \|_{L^p}^p 
&\le
{\mathbb E}[ |\chi^\prime (\Xi_\ve/ R^{4m}) |^{pq}]^{1/q} 
{\mathbb E}[ \|D\Xi_\ve\|_{\cH}^r  R^{-4mr} ]^{1/r} 
\nn\\
&\le
C\mu (\{\ve{\bf W} \in \bar{B}_{2R} ({\bf w}) \})^{1/q},
\end{align*}
where $1/q +1/r =1$.
By repeating similar computations, we have for all $0 \le j \le k$ that
 \[
\|D^j \chi (\Xi_\ve/ R^{4m})  \|_{L^p}^p 
\le C\mu (\{\ve{\bf W} \in \bar{B}_{2R} ({\bf w} \})^{1/q}.
\]
Using \eqref{eq.0625-2}, we have
$
\lim_{R\searrow 0}
\limsup_{\ve \searrow 0}
 \ve^2 \log \| \chi (\Xi_\ve/ R^{4m}) \|_{p,k}^p
 \le 
 - (1/q) I ({\bf w})$.
 Letting $q \searrow 1$, we finish the proof.
\end{proof}

\begin{proposition}\label{pr.limsup_ball}
Let the notation be as above. Then, we have 
\begin{align}
\lim_{R\searrow 0} \limsup_{\ve\searrow 0} 
 \ve^2 \log \nu^{\ve}_{u,a}  (\bar{B}_{R} ({\bf w}))
  \le - I_1 ({\bf w}),
 \qquad
 {\bf w} \in G\Omega^B_{\al, 4m} ({\mathbb R}^{d}).
 \label{eq.0630-1}
 \end{align}
\end{proposition}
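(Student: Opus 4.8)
The plan is to bound $\nu^{\ve}_{u,a}(\bar{B}_{R}({\bf w}))$ by a smooth pairing and then run the localized integration-by-parts machinery of Lemma~\ref{lem.0624}, but with $L^{p}$-exponents $p$ tending to $1$ rather than the fixed exponent $2$ used for Proposition~\ref{pr.div_order}. Recall that $\nu^{\ve}_{u,a}$ is the push-forward under $\ve\cL$ of the positive finite measure $\theta^{\ve}_{u,a}$ on $\cW$ representing $\delta_a(X^{\ve}_1)$, and that quasi-surely ${\bf 1}_{\bar{B}_{R}({\bf w})}\circ(\ve\cL)\le\chi(\Xi_{\ve}(\,\cdot\,,{\bf w})/R^{4m})$ with $\chi,\Xi_{\ve}$ as in Section~\ref{sec.upper}. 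Hence
\[
\nu^{\ve}_{u,a}(\bar{B}_{R}({\bf w}))\ \le\ {\mathbb E}\bigl[\delta_a(X^{\ve}_1)\,\chi(\Xi_{\ve}/R^{4m})\bigr],
\]
the right-hand side being the pairing of $\tilde{\bf D}_{-\infty}$ with ${\bf D}_{\infty}$.

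First I would treat every ${\bf w}$ with $I({\bf w})=I_1({\bf w})$, i.e.\ either ${\bf w}$ is not a rough-path lift or ${\bf w}=\cL(h)$ with $h\in{\cal Q}^{u,a}$. Put $G_{\ve}:=\chi(\Xi_{\ve}/R^{4m})\in{\bf D}_{\infty}$ and apply the iterated localized IbP identity \eqref{loc.ibp_rpt} with $G=G_\ve$, so that ${\mathbb E}[\delta_a(X^{\ve}_1)G_{\ve}]={\mathbb E}[f(F_{\ve})\eta_{2n+1}(F_{\ve})\,\Phi^{\eta}_{\beta}(\,\cdot\,;G_{\ve})]$, where $f(F_{\ve})\eta_{2n+1}(F_{\ve})$ is bounded uniformly in $\ve$ (since $\eta_{2n+1}$ has compact support). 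Choosing exponents $p_0>p_1>\cdots>p_{2n}>1$ all arbitrarily close to $1$, applying H\"older with the conjugate pair $(p_{2n},p_{2n}')$, and then iterating the estimate of Lemma~\ref{lem.0624}(ii) $2n$ times (each step costing a fixed power of $\ve^{-1}$) gives
\[
{\mathbb E}[\delta_a(X^{\ve}_1)G_{\ve}]\ \le\ C\,\ve^{-\nu}\,\|\chi(\Xi_{\ve}/R^{4m})\|_{p_0,2n},\qquad 0<\ve\le1 .
\]
Taking $\ve^2\log$, discarding the subexponential prefactor $\ve^{-\nu}$, and invoking Lemma~\ref{lem.0630} yields $\lim_{R\searrow0}\limsup_{\ve\searrow0}\ve^2\log\nu^{\ve}_{u,a}(\bar{B}_{R}({\bf w}))\le -I({\bf w})/p_0$; letting $p_0\searrow1$ gives $\le -I({\bf w})=-I_1({\bf w})$ (which is $-\infty$ when ${\bf w}$ is not a lift, since Lemma~\ref{lem.0630} then already gives $-I({\bf w})=-\infty$).

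The remaining case is ${\bf w}=\cL(h)$ with $h\in\cH\setminus{\cal Q}^{u,a}$, where $I_1({\bf w})=\infty$ but $I({\bf w})<\infty$, so the previous bound is insufficient; here I would show that $\nu^{\ve}_{u,a}(\bar{B}_{R}({\bf w}))=0$ once $R$ and $\ve$ are small. Since $\psi(h)_1\neq a$, fix a distance $d$ on $\cM$ and $r_0\in(0,\tfrac12 d(\psi(h)_1,a))$. By the local Lipschitz continuity in Lyons' theorem (applied through the Besov--H\"older embedding, as in the proof of Theorem~\ref{thm.mainQ}), the $\infty$-quasi-continuous version $X^{\ve}_1(w)=\Psi(\ve{\bf W},\lambda^{\ve})_1$ satisfies $d(X^{\ve}_1(w),\psi(h)_1)\le C(R+\ve^2)$ whenever $w\in\cZ_{\al,4m}$ and $\ve\cL(w)\in\bar{B}_{R}({\bf w})$, hence lies outside $\bar B(a,r_0)$ there as soon as $R,\ve$ are small. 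On the other hand, taking $\eta_0\in C^{\infty}_c$ with $\eta_0\equiv1$ near $a$ and ${\rm supp}\,\eta_0\subset\bar B(a,r_0)$ we have $\eta_0\delta_a=\delta_a$, so $\theta^{\ve}_{u,a}(\{X^{\ve}_1\notin\bar B(a,r_0)\})\le{\mathbb E}[(1-\eta_0(X^{\ve}_1))\delta_a(X^{\ve}_1)]=0$; combining this with the previous sentence and the slimness of $\cZ^c_{\al,4m}$ forces $\theta^{\ve}_{u,a}(\{\ve\cL(w)\in\bar{B}_{R}({\bf w})\})=0$, i.e.\ $\nu^{\ve}_{u,a}(\bar{B}_{R}({\bf w}))=0$, and the iterated limsup is $-\infty=-I_1({\bf w})$.

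The step I expect to be the crux is extracting the \emph{full} rate $-I({\bf w})$ rather than only $-I({\bf w})/2$: a naive Cauchy--Schwarz bound ${\mathbb E}[\delta_a(X^{\ve}_1)G_{\ve}]\le\|\delta_a(X^{\ve}_1)\|_{2,-2n}\|G_{\ve}\|_{2,2n}$ loses a factor $2$ because $\|G_{\ve}\|_{2,2n}$ decays only like $e^{-I({\bf w})/(2\ve^2)}$. Routing through the localized IbP and H\"older with $p_{2n}\searrow1$ avoids this precisely because the bounded factor $f(F_{\ve})\eta_{2n+1}(F_{\ve})$ can absorb the large conjugate exponent at no exponential cost, so the exponential decay is governed by $\|G_{\ve}\|_{p_0,2n}\sim e^{-I({\bf w})/(p_0\ve^2)}$ with $p_0$ arbitrarily close to $1$. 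The rest is routine bookkeeping already available in the excerpt: the volume/chart normalization of $\delta_a$ together with the comparison \eqref{eq.MC_hikaku} of Malliavin covariances, the distinction between quasi-sure and almost-sure identities, and the capacity estimate controlling $\theta^{\ve}_{u,a}$ on slim sets.
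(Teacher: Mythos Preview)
Your proposal is correct and follows essentially the same route as the paper: a two-case split, with the localized iterated IbP of Lemma~\ref{lem.0624} combined with Lemma~\ref{lem.0630} (letting the H\"older exponent tend to $1$) in the ``on-target'' case, and a vanishing argument via Lyons' continuity in the ``off-target'' case. The paper phrases the dichotomy as $a=\hat a:=\Psi({\bf w},0)_1$ versus $a\neq\hat a$, which coincides with your split $I({\bf w})=I_1({\bf w})$ versus ${\bf w}=\cL(h)$ with $h\notin\cQ^{u,a}$ (for non-lift ${\bf w}$ either branch works since $I({\bf w})=\infty$); in the $a\neq\hat a$ case the paper approximates $\delta_a$ by smooth $g_k$ supported away from $\hat a$ rather than multiplying by a cutoff $\eta_0$, but the substance is the same.
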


\begin{proof}
Write $\hat{a} :=\Psi ({\bf w}, 0)_1$ for simplicity.
First, we consider the case $a \neq \hat{a}$.
Let  $U$ and $\hat{U}$ (with $U\cap \hat{U}=\emptyset$) be 
a neighborhood of $a$ and $\hat{a}$, respectively.
By Lyons' continuity theorem,
there exists $R_0 >0$ and $\ve_0 >0$
such that 
$\Psi ({\bf v}, \lambda^\ve)_1 \in  \hat{U}$ 
if ${\bf v} \in \bar{B}_{2R_0} ({\bf w})$ and $\ve \in (0,\ve_0)$.
Suppose that $\{ g_k\}_{k=1}^\infty$ be a sequence of smooth functions
with supports in $U$ that approximates $\delta_a$. 
Then,  we have
\begin{align*}
\nu^{\ve}_{u,a}  (\bar{B}_{R} ({\bf w}))
&\le
{\mathbb E} [\chi (\Xi_\ve/ R^{4m}) \delta_a (X^{\ve}_1)]
=
\lim_{k\to\infty} {\mathbb E} 
[\chi (\Xi_\ve/ R^{4m}) g_k (X^{\ve}_1)]
\\
&\le
\lim_{k\to\infty} {\mathbb E}
 [{\bf	 1}_{\{\ve{\bf W} \in \bar{B}_{2R} ({\bf w}) \}} \,
    g_k (\Psi (\ve{\bf W}, \lambda^\ve)_1)] =0
 \end{align*}
if $\ve \in (0,\ve_0)$ and $R \in (0, R_0)$.
Thus, we  have shown \eqref{eq.0630-1} for this case.

Next, we consider the case $a= \hat{a}$.
As in the proof of Proposition \ref{pr.div_order}, 
we use the localized IbP formula.
Let $\eta := \{\eta_i\}_{i=1}^{2n+1}$ as in Proposition \ref{pr.div_order}.
Then, we can see that, for every $p \in (1,\infty)$, 
there exist positive constants $c$ and $\nu$ 
(independent of $\ve$) such that
\begin{align*}
\nu^{\ve}_{u,a}  (\bar{B}_{R} ({\bf w}))
&\le
{\mathbb E} [\chi (\Xi_\ve/ R^{4m}) \delta_a (X^{\ve}_1)]
\nn\\
&=
{\mathbb E}[ f (F_{\ve})  \eta_{2n+1} (F_{\ve})
\Phi_{\beta}^{\eta}(\,\cdot\,; \chi (\Xi_\ve/ R^{4m})) ]
\nn\\
&\le c \ve^{-\nu} \| \chi (\Xi_\ve/ R^{4m}) \|_{p,2n}.
 \end{align*}
Here, we used \eqref{eq.0625-1} and 
\eqref{eq.loc.ibp} in Lemma \ref{lem.0624} repeatedly.
Using Lemma \ref{lem.0630} and then letting $p \searrow 1$,
we have 
$
\lim_{R\searrow 0} \limsup_{\ve\searrow 0} 
 \ve^2 \log \nu^{\ve}_{u,a}  (\bar{B}_{R} ({\bf w}))
  \le - I ({\bf w})
  $ 
  in this case.
Thus,  we have proved  \eqref{eq.0630-1}.
\end{proof}

%%%%%%%%%%%%%%%%%%%%%%%%%%%
%\newpage
%%%%%%%%%%%%%%%%%%%
\appendix\section{Positivity of heat kernel}
\label{sec.appen}

Let $p^{\ve}_t (x, a)  = 
{\mathbb E}[\delta_a (X^{\ve}_t ) ]$ be the heat kernel 
(or the density function) on $\cM$
associated with the 
$\ve^2 (\Delta_{\mathrm{sub}}/2 +V)$-diffusion process.
Here, $(X^{\ve}_t)$ is the projection of the
solution of SDE \eqref{eq.sde_on_P} with the starting point $u$.
The skeleton ODE with drift 
which corresponds to SDE \eqref{eq.sde_on_P} 
with $\ve =1$ is given as follows:
\begin{equation}\label{ode.V0}
d\bar\phi(h)_t 
= \sum_{i=1}^d  A_i ( \bar\phi(h)_t) dh^i_t + A_0 ( \bar\phi(h)_t)dt, 
\qquad   \bar\phi(h)_0 =u.
\end{equation}
We write $\bar\psi(h)_t=\pi( \bar\phi(h)_t)$.

The purpose of this appendix is to verify that
\begin{equation}\label{eq.pos}
p^{\ve}_t (x, a) >0 \qquad \mbox{for every
$x, a \in \cM$ and $\ve, t \in (0,1]$.}
\end{equation}
By the scaling property $p^{\ve}_t (x, a) =p^{1}_{\ve^2 t} (x, a)$, 
 it is enough to prove  \eqref{eq.pos} for $\ve =1$.
This kind of positivity for the density
of Wiener functionals has been well-studied (see \cite{aks}
and Remark \ref{rem_aks} below).
According to these result, Lemma \ref{lem.0701} below implies \eqref{eq.pos}.

\begin{lemma}\label{lem.0701}
Let $x, a \in \cM$ and $u \in \pi^{-1} (x)$.
Then, for every $\tau \in (0,1]$, there exists $h\in \cH$ such that 
$ \bar\psi(h)_\tau =a$ and $D\bar\psi (h)_\tau \colon \cH \to T_a\cM$ 
is surjective.
\end{lemma}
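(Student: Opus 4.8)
The strategy is to concatenate a short \emph{wiggle} on $[0,\delta]$, whose only role is to make the end-point map a submersion, with a \emph{steering} control on $[\delta,\tau]$ that brings the trajectory to $a$; here $\delta\in(0,\tau)$ will be chosen small. This parallels the architecture of the proof of Lemma~\ref{pr.CMnice}, with Chow--Rashevsky replaced by Proposition~\ref{pr.kunita} and the drift $A_0$ absorbed into the steering piece. Keep in mind that $A_0$ is the horizontal lift of $V_0=V+(\Delta_{\mathrm{sub}}-\tilde\Delta)/2\in\Gamma(T\cM)$ and, by Lemma~\ref{lem.sa_vect}, $V_0-V\in\Gamma(\cD)$. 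For the wiggle I would prove: for all sufficiently small $\delta>0$ there is $\hat u\in\cH^d$ over $[0,\delta]$, with $|\hat u'_t|\le 1$ a.e., such that the time-$\delta$ solution map $\bar\phi(\,\cdot\,)_\delta$ of \eqref{ode.V0} started at $u$ (which for small $\delta$ keeps the trajectory in a chart of $\cP$ around $u$) has the property that $D\bigl(\pi\circ\bar\phi(\,\cdot\,)_\delta\bigr)$ at $\hat u$ is surjective onto $T_{q}\cM$, where $p:=\bar\phi(\hat u)_\delta$ and $q:=\pi(p)$. After removing the drift by the standard interaction-picture substitution (solving the driftless, time-dependent system obtained by pulling the $A_i$ back along the flow of $A_0$), this is exactly the local statement used in the proof of Lemma~\ref{pr.CMnice}, and follows from the partial H\"ormander condition $\pi_*\,\mathrm{Lie}(A_1,\dots,A_d)(u)=T_{\pi(u)}\cM$ of Lemma~\ref{lem.partHor} together with the construction of \cite[Section~3]{in2}.

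For the steering piece I would apply Proposition~\ref{pr.kunita} with the vector field $V_0$ (legitimate since $V_0\in\Gamma(T\cM)$), on $[\delta,\tau]$, to obtain a path $k\colon[\delta,\tau]\to\cM$ with $k_\delta=q$, $k_\tau=a$ and $k'_t-V_0(k_t)\in\cD_{k_t}$ a.e. Let $\bar\phi$ be the horizontal lift of $k$ to $\cP$ starting at $p$ (the lift being the inverse of $\pi$ on finite-energy paths, as recalled in Section~\ref{sec.EE}). Since $k'_t=(k'_t-V_0(k_t))+V_0(k_t)$ with the first summand in $\cD_{k_t}=\mathrm{span}\{\bar\phi_t\langle\mathbf{e}_i\rangle\}_{i=1}^d$ and $\ell_{\bar\phi_t}\langle V_0(k_t)\rangle=A_0(\bar\phi_t)$, this $\bar\phi$ solves \eqref{ode.V0} on $[\delta,\tau]$ for the control $h\in\cH^d$ given by ${h'}^i_t=\langle k'_t-V_0(k_t),\bar\phi_t\langle\mathbf{e}_i\rangle\rangle_g$ (of finite energy because $\int|k'_t-V_0(k_t)|_g^2\,dt<\infty$, $\cM$ being compact), and $\bar\psi(h)_t=\pi(\bar\phi_t)=k_t$, so $\bar\psi(h)_\tau=a$. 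Covering $[\delta,\tau]$ by finitely many subintervals on each of which $k$ lies in a chart carrying an orthonormal frame of $\cD$, the path $k$ solves on each piece a \emph{fixed} controlled ODE of the form \eqref{eq.ode_End} with the drift $V_0$ added; hence the composite end-point map $\mathrm{End}$ of these fixed ODEs is a local diffeomorphism from a neighbourhood of $q$ onto a neighbourhood of $a$, with $\mathrm{End}(q)=a$.

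Now let $h$ be the concatenation of $\hat u$ on $[0,\delta]$ with the steering control on $[\delta,\tau]$, kept constant on $[\tau,1]$; then $\bar\psi(h)_\tau=\mathrm{End}(q)=a$. To see that $D\bar\psi(h)_\tau\colon\cH^d\to T_a\cM$ is onto I would vary only the wiggle: given a $C^1$ family $\hat u^\ve$ of wiggle controls with $\hat u^0=\hat u$, define $h^\ve$ to agree with $\hat u^\ve$ on $[0,\delta]$ and, on $[\delta,\tau]$, to be the control whose $\cM$-projection solves the \emph{same} fixed piecewise ODEs started at $\pi(\bar\phi(\hat u^\ve)_\delta)$; then $h^\ve\in\cH^d$ (by the same finite-energy bound), $\ve\mapsto h^\ve$ is $C^1$, and $\bar\psi(h^\ve)_\tau=\mathrm{End}\bigl(\pi(\bar\phi(\hat u^\ve)_\delta)\bigr)$ for small $\ve$. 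Consequently $\tfrac{d}{d\ve}\big|_{0}\bar\psi(h^\ve)_\tau=D\mathrm{End}\big|_q\bigl(\tfrac{d}{d\ve}\big|_0\pi(\bar\phi(\hat u^\ve)_\delta)\bigr)$, and as $\hat u^\ve$ runs over all $C^1$ perturbations the right-hand side runs over $D\mathrm{End}\big|_q$ applied to $\mathrm{Im}\,D\bigl(\pi\circ\bar\phi(\,\cdot\,)_\delta\bigr)\big|_{\hat u}=T_q\cM$, which is all of $T_a\cM$ because $D\mathrm{End}\big|_q$ is an isomorphism. Since $\tfrac{d}{d\ve}\big|_0\bar\psi(h^\ve)_\tau=D\bar\psi(h)_\tau\bigl(\tfrac{d}{d\ve}\big|_0 h^\ve\bigr)$, the map $D\bar\psi(h)_\tau$ is surjective, as required.

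The one genuinely delicate point, exactly as in \cite{in2}, is the wiggle: making rigorous that the short-time end-point map of \eqref{ode.V0} can be made a submersion \emph{onto $T_{\pi(\cdot)}\cM$} using only the partial H\"ormander condition on the frame bundle, and that the presence of the drift $A_0$ does not interfere. Once that is granted, the horizontal lift of $k$, the local-diffeomorphism property of $\mathrm{End}$, and the chain-rule computation above are routine and follow the proof of Lemma~\ref{pr.CMnice} essentially verbatim.
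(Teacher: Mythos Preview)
Your architecture---a short wiggle for submersivity, then steering via Proposition~\ref{pr.kunita}, then the $\mathrm{End}$-composition chain-rule argument---matches the paper's exactly, and your steering and surjectivity paragraphs are correct and essentially identical to what the paper does (its ``second simple remark'' is precisely your horizontal-lift computation of the control $h$ from $k$). The difference lies entirely in how the wiggle is produced.

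The paper does not attempt to remove the drift by an interaction picture or to adapt the \cite{in2} construction to the partial-H\"ormander setting on $\cP$. Instead it first reduces to $\cM$: with a local orthonormal frame $\{Z_1,\dots,Z_d\}$ of $\cD$ near $x$ (where \emph{full} H\"ormander holds), the projected drifted ODE is \eqref{ode.zeta2}, and the bijection between controls for \eqref{ode.V0} on $\cP$ and controls for \eqref{ode.zeta2} on $\cM$ is the content of the paper's two ``simple remarks''. For the wiggle itself the paper then invokes Lemma~\ref{lem.dMC_dense}: H\"ormander at $x$ makes the stochastic Malliavin covariance of the associated SDE a.s.\ invertible, and rough-path continuity plus density transfer this to a deterministic $k$ with $D\zeta(k)_{1/2}$ surjective, \emph{with the drift still present}. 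This handles in one stroke both issues you flag as delicate.

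Your route is workable in principle but, as you yourself acknowledge, not complete: the construction in \cite{in2} is written for time-independent vector fields satisfying full H\"ormander on $\R^n$, and you would need to verify it survives the pull-back along the flow of $A_0$ (time-dependent coefficients) and yields surjectivity after projection by $\pi$ under only partial H\"ormander on $\cP$. A cleaner variant of your own approach is to project to $\cM$ first (as the paper does), so that you work with \eqref{ode.zeta2} under full H\"ormander at $x$; then only the drift remains, and the interaction-picture step becomes a routine small-time perturbation. But the paper's use of Lemma~\ref{lem.dMC_dense} is more economical still and avoids the explicit wiggle construction altogether.
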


\begin{proof}
It is sufficient to prove the case $\tau =1$. The general case can be done with trivial modifications.

We now make two simple remarks.
First, let $\{Z_1,\ldots, Z_d \}$ be an orthonormal frame of $\cD$
on a coordinate neighborhood $U\subset \cM$.
Consider the following ODE on $U$ driven by a Cameron-Martin path $h$:
\begin{equation}\label{ode.zeta2}
d\zeta(h)_t 
= \sum_{i=1}^d  Z_i ( \zeta (h)_t) dh^i_t + V_0 ( \zeta (h)_t)dt.
\end{equation}
Then, $\zeta =\zeta (h)$ is of finite energy and  satisfies that
$\zeta^\prime_t - V_0 (\zeta_t) \in \cD_{\zeta_t} $ for almost all $t$.
Conversely, if $\zeta$ is a path on $\cM$ of finite energy
such that $\zeta^\prime_t - V_0(\zeta_t) \in \cD_{\zeta_t} $ for almost all $t$, then there uniquely exists a Cameron-Martin path $h$
such that $\zeta =\zeta (h)$. In this sense, we have a one-to-one
correspondence $h\leftrightarrow\zeta(h)$.

Second, consider $\zeta \in \cH_x (\cM, T\cM)$ such that 
$\zeta^\prime_t - V_0(\zeta_t) \in \cD_{\zeta_t} $ for almost all $t \in [0,1]$.
We denote by $\xi$ the horizontal lift of $\zeta$ with $\xi_0=u$.
Since $A_0$ is the horizontal lift of $V_0$, 
$\xi^\prime_t - A_0(\xi_t) \in \cK_{\xi_t}$ for almost all $t \in [0,1]$.
Since $\{ A_1(u), \ldots, A_d (u)\}$ forms a linear basis 
of $\cK_u$ for all $u \in \cP$, 
there uniquely exists $h \in \cH$ such that 
$\xi =\bar\phi(h)$.
Conversely, if $\bar\phi(h)$ solves 
ODE \eqref{ode.V0}, then $\zeta := \bar\psi(h)$ satisfies that
$\zeta^\prime_t - V_0(\zeta_t) \in \cD_{\zeta_t} $ for almost all $t$.
In this way,  we have a one-to-one
correspondence $h\leftrightarrow\zeta $.

By using 
Proposition \ref{pr.kunita} and Lemma \ref{lem.dMC_dense},
 we can now prove Lemma \ref{lem.0701} 
in a similar way to the proof of Lemma \ref{pr.CMnice}.

Let $U$ is a coordinate neighborhood of $x$ 
and consider ODE \eqref{ode.zeta2} with $\zeta (h)_0 =x$ on $U$.
By extending the coefficient vector fields with compact support, 
we also view \eqref{ode.zeta2} as an ODE on $\R^n$.
Since these vector fields satisfy 
H\"ormander's condition at $x$, we can use  Lemma \ref{lem.dMC_dense}. It implies that
 there exists a Cameron-Martin
path $k \colon [0,1/2] \to \R^d$ such that (1) the tangent map of
$\zeta (\,\cdot\,)_{1/2}$ is surjective at $k$
and (2) $[0,1/2]\ni t\mapsto \zeta (k)_t$ stays inside $U$.
By Proposition \ref{pr.kunita}, there exist a finite energy path
$\eta\colon [1/2,1] \to \cM$ such that 
$\eta_{1/2} = \zeta (k)_{1/2}$, $\eta_1 =a$ and 
$\eta^\prime_t - V_0(\eta_t) \in \cD_{\eta_t} $ for almost all $t$.

Define $\zeta \in \cH_x (\cM, T\cM)$ to be the concatenation 
of $\zeta (k)$ and $\eta$.
Then, $\zeta_1 =a$ and $\zeta^\prime_t - V_0(\zeta_t) \in \cD_{\zeta_t} $ for almost all $t$.
The corresponding $h \in \cH$ is the desired element.
(The proof of the surjectivity of $D\bar\psi (h)_1$ is essentially 
the same as in the proof of Lemma \ref{pr.CMnice}
and is therefore omitted.)
\end{proof}

\begin{remark}\label{rem_aks}
Precisely, the positivity theorem for the density 
in \cite{aks} is for SDEs on a Euclidean space.
But, after slightly modifying it, one can verify that 
it still holds  for SDEs on a compact manifold.
\end{remark}

%%%%%%%%% Acknowledgement %%%%%%%%%%%
%%%%%%%%%%%%%%%%%%%%%%%%%%%%%%%%%%%%%
\noindent
{\bf Acknowledgement:}~
The author is partially supported by 
JSPS KAKENHI Grant No. 20H01807.

%%%%%%%%%%%%%%%%%%%%%%%%%%%
%\newpage
%%%%%%%%%%%%%%%%%%%%%%%%%%

%%%%%%%%%% Affiliations %%%%%%%%%%%%
\bigskip
%%%%%%%%%%%%%%%%%%%%%%%%%%%%%%%%%%%%
\begin{flushleft}
  \begin{tabular}{ll}
    Yuzuru \textsc{Inahama}
    \\
    Faculty of Mathematics,
    \\
    Kyushu University,
    \\
    744 Motooka, Nishi-ku, Fukuoka, 819-0395, JAPAN.
    \\
    Email: {\tt inahama@math.kyushu-u.ac.jp}
  \end{tabular}
\end{flushleft}

\end{document}